\numberwithin{equation}{section}
\def\sO{{\mathscr O}}
\newcommand{\A }{\mathbb{A}}
\newcommand{\CC}{\mathbb{C}}
\newcommand{\PP}{\mathbb{P}}
\newcommand{\QQ}{\mathbb{Q}}
\newcommand{\ZZ}{\mathbb{Z}}
\newcommand{\cal}{\mathcal}
\def\cE{{\cal E}}
\def\cF{{\cal F}}
\def\cH{{\cal H}}
\def\cM{{\cal M}}
\def\cN{{\cal N}}
\def\cQ{{\cal Q}}
\def\cU{{\cal U}}
\def\cY{{\cal Y}}
\def\cZ{{\cal Z}}
\def\cX{\mathcal{X} }
\def\fC{\mathfrak{C}}
\def\fE{\mathfrak{E}}
\def\fN{\mathfrak{N}}
\def\loc{\mathrm{loc}}
\let\fE=\cE
\def\and{\quad{\rm and}\quad}
\def\lra{\longrightarrow }
\def\mapright#1{\,\smash{\mathop{\lra}\limits^{#1}}\,}
\def\mapleft#1{\,\smash{\mathop{\longleftarrow}\limits^{#1}}\,}
  \DeclareMathOperator{\Hom}{Hom}
\DeclareMathOperator{\id}{id}
\DeclareMathOperator{\spec}{Spec}
\newtheorem{prop}{Proposition}[section]
\newtheorem{theo}[prop]{Theorem}
\newtheorem{lemm}[prop]{Lemma}
\newtheorem{coro}[prop]{Corollary}
\newtheorem{rema}[prop]{Remark}
\newtheorem{exam}[prop]{Example}
\newtheorem{defi}[prop]{Definition}
\newtheorem{assu}[prop]{Assumption}
\def\beq{\begin{equation}}
\def\eeq{\end{equation}}
\def\dual{^{\vee}}
\def\virt{^{\mathrm{vir}} }
\def\Spec{\mathrm{Spec}\, }
\def\bbT{\mathbb{T} }
\def\bbL{\mathbb{L} }
\def\DM{Deligne-Mumford }
\def\bG{\mathbb{G}}
\def\bbA{\mathbb{A} }
\def\Po{\PP^1}
\def\redd{{\mathrm{red}}}
\def\tE{\widetilde{E} }
\def\tX{\widetilde{X} }
\def\tF{\widetilde{F} }
\def\tX{\widetilde{X} }
\def\tE{\widetilde{E}}
\def\DM{Deligne-Mumford }
\def\bk{\mathbf{k}}
\def\bcV{\mathbb{V}}
\title{Virtual intersection theories}
\date{{2021.6.15}}
\author{Young-Hoon Kiem}
\address{Department of Mathematics and Research Institute
of Mathematics, Seoul National University, Seoul 08826, Korea}
\email{kiem@snu.ac.kr}
\author{Hyeonjun Park}
\address{Department of Mathematics, Seoul National University, Seoul 08826, Korea}
\email{hyeonjun93@snu.ac.kr}
\thanks{Partially supported by Samsung Science and Technology Foundation SSTF-BA1601-01}
\keywords{Intersection theory, virtual fundamental class, virtual pullback, torus localization, cosection localization.}
\begin{document}
\begin{abstract} 
We construct virtual fundamental classes in all intersection theories including Chow theory, K-theory and algebraic cobordism for quasi-projective \DM stacks with perfect obstruction theories and prove the virtual pullback formula, the virtual torus localization formula and cosection localization principle. 
\end{abstract}
\maketitle
%\tableofcontents

\section{Introduction}\label{S1}

An \emph{intersection theory} in algebraic geometry is about finding the intersection of subschemes modulo an equivalence relation. The prototype for all intersection theories is Bezout's theorem:

\smallskip

\noindent\emph{If $H_1, \cdots, H_n$ are hypersurfaces of degree $d_1, \cdots, d_n$ in the projective space $\PP^n$, their intersection $H_1\cap \cdots \cap H_n$ has at most $\prod_i d_i$ points or contains an algebraic curve.}

\smallskip

A modern presentation of Bezout's theorem involves a \emph{graded abelian group} $A_*(\PP^n)\cong \ZZ^{n+1}$ 
with intersection product
$$A_i(\PP^n)\otimes A_j(\PP^n)\mapright{\times} A_{i+j}(\PP^n\times \PP^n)\mapright{\Delta^*} A_{i+j-n}(\PP^n)$$
where the first map is the \emph{exterior product} $\xi\otimes \eta\mapsto [\xi\times \eta]$ and the second map is the  intersection with the diagonal, or the  \emph{lci pullback}  by the diagonal. The intersection pairing is defined by 
$$\xi\cdot \eta=p_*\Delta^*(\xi\times \eta)\in \ZZ$$
where $p_*:A_*(\PP^n)\to H_*(\spec \bk)=\ZZ$ is the  \emph{projective pushforward} by  $p:\PP^n\to \spec \bk$.
%In this case, $H_*$ can be the Chow group, (Borel-Moore) homology group,  or (Todd twisted) K-group and more. 

In late 19th century, Schubert extended Bezout's theorem to Grassmannians and solved many enumerative problems about the numbers of lines and planes satisfying constraints. But Schubert's computations were not entirely rigorous and when listing up 23 problems for the 20th century, Hilbert included it as the 15th problem to provide a 
rigorous foundation of Schubert's enumerative calculus \cite{Klei}. 
In late 1970s,  Fulton and MacPherson developed a rigorous intersection  theory for schemes, a main ingredient of which is the refined Gysin pullback obtained by a deformation to the normal cone together with the excision and the $\bbA^1$-homotopy properties of Chow groups \cite{Ful}. 
Their intersection theory was updated by Vistoli \cite{Vist} for \DM stacks and by Kresch for Artin stacks \cite{Kre}.

In order to provide an algebro-geometric theory of Gromov-Witten invariants, the theory of virtual fundamental class was invented in 1995 by Li-Tian \cite{LiTi} and Behrend-Fantechi \cite{BeFa}, based on the Chow intersection theory in \cite{Ful, Vist, Kre}. Their construction was later relativized to the theory of virtual pullback by Manolache \cite{Man} and actually it is a natural generalization of Fulton-MacPherson's refined Gysin pullback after a choice of an embedding of the normal cone (stack) into a vector bundle (stack).  The virtual fundamental class has played a key role in enumerative geometry and was studied intensively during the past two decades. 

As suggested by Kontsevich \cite{Kont} already in 1995, given a \DM stack $X$ with a perfect obstruction theory, the virtual fundamental class $[X]\virt_{C\!H}$ in Chow theory has its K-theoretic twin, called the virtual structure sheaf  $[\sO_X\virt]=:[X]\virt_K$ (cf. \cite[Remark 5.4]{BeFa}, \cite{YLe}) and the virtual Riemann-Roch was proved in \cite{FaGo}. (See also \cite[Theorem 5.8]{KLk}.)
Quite recently, J. Shen in \cite{Shen} constructed a virtual fundamental class $[X]\virt_\Omega$ in the algebraic cobordism theory $\Omega_*$ of Levine-Morel \cite{LeMo} 
%which is by the way a universal intersection theory 
for quasi-projective schemes (cf. Theorem \ref{2.2}). 
So it seems natural to ask if there are \emph{any other homology type theories where the virtual fundamental classes are defined with nice expected properties}. 

Despite the huge number of research articles on virtual {invariants} like Gromov-Witten and Donaldson-Thomas invariants, 
there are only a few methods for handling virtual fundamental classes. 
The three most important techniques for virtual fundamental classes are \begin{enumerate}
\item the virtual pullback formula $f^![X]\virt=[Y]\virt$,
\item the virtual torus localization formula $[X]\virt=\imath_*\frac{[X^T]\virt}{e(N\virt)}$ and
\item the cosection localization principle $\imath_*[X]\virt_\loc=[X]\virt$.
\end{enumerate}
The virtual pullback formula for {Chow theory} was proved in \cite[Propostion 5.10]{BeFa} and \cite{KKP} in the special case of lci pullbacks and in \cite{Man} in full generality. It was extended to cosection localized virtual fundamental classes in \cite{CKL} and to the setting of semi-perfect obstruction theory in \cite{Kis}. Recently Qu proved the virtual pullback formula for the virtual structure sheaves in \cite{Qu}. The virtual torus localization formula for {Chow theory} was proved by Graber-Pandharipande \cite{GrPa} and later generalized to the cosection localized virtual fundamental classes in \cite{CKL}. (See also \cite{Kis}.) The cosection localization principle was proved 
in \cite{KLc} for the {Chow theory} class $[X]\virt_{C\!H}$ and later for the K-theory class $[X]\virt_K=[\sO_X\virt]$ in \cite{KLk}. 
So one may ask whether \emph{these three important techniques may be generalized to virtual fundamental classes for any other homology type intersection theories}.

The goal of this paper is threefold: \begin{enumerate}
\item to extract key properties from \cite{Ful} and codify the notion of an intersection theory for schemes and stacks (Definition \ref{2.1});
\item to construct virtual fundamental classes in all intersection theories (Definitions \ref{2.86}, \ref{3.32}, and \ref{3.82});
\item to prove the virtual pullback formula (Theorems \ref{2.85}, \ref{3.34} and \ref{3.90}), the virtual torus localization formula (Theorems \ref{TLSc} and \ref{TLSt}) and the cosection localization principle (Theorems \ref{3.42} and \ref{4.60}) for virtual fundamental classes in all intersection theories.
\end{enumerate}

For quasi-projective schemes, our results generalize all the known virtual fundamental classes and their key properties. 
Unfortunately, it seems extremely difficult to extend an intersection theory for schemes to stacks, with only one successful example of Chow theory. For example, we don't have an algebraic cobordism theory for Artin stacks.
However, even if we are only interested in schemes, we do need to handle intersection theories of cone stacks and vector bundle stacks. To handle cone stacks even in the absence of an intersection theory for stacks which extends a given intersection theory $H_*$ for quasi-projective schemes, we may define a homology type theory (Definition \ref{2.5})
for algebraic stacks $\cX$  by  the inverse limit \[ \cH_d(\cX)=\varprojlim_{t : T\to \cX} H_{d+d(t)}(T)\]
where $t:T\to \cX$ runs through all smooth morphisms from quasi-projective schemes $T$.
This limit theory has the following nice properties:\begin{enumerate}
\item $\cH_*$ extends $H_*$, i.e. for quasi-projective schemes $X$, $\cH_*(X)=H_*(X)$;
\item for global quotients $\cX=[X/G]$, $\cH_*(\cX)$ coincides with the equivariant Chow theory of Edidin-Graham \cite{EdGr} when $H_*=C\!H_*$ is {Chow theory} and with the equivariant algebraic cobordism of Krishna \cite{Kri} and Heller-Malag\'{o}n-L\'{o}pez \cite{HeMa} when $H_*=\Omega_*$ is Levine-Morel's algebraic cobordism theory;
\item $\cH_*$ is the terminal extension of $H_*$ for quasi-projective schemes to algebraic stacks, i.e. if the intersection theory $H_*$ for quasi-projective schemes extends to algebraic stacks, then there is a functorial homomorphism $H_*(\cX)\to \cH_*(\cX)$ for an algebraic stack $\cX$; 
\item for the category of algebraic stacks admitting a good system of approximations by quasi-projective schemes (Definition \ref{3.1}), $\cH_*$ is a weak intersection theory (Theorem \ref{2.10}), i.e. $\cH_*$ satisfies all the axioms for an intersection theory except that the excision sequence is replaced by a weaker condition (Definition \ref{2.70}). 
\end{enumerate} 
It turns out that this limit intersection theory suffices for our purpose of constructing virtual fundamental classes and proving their key properties.

\medskip

The layout of this paper is as follows. In \S2,  we codify the notion of an intersection theory for schemes and stacks and discuss useful properties as well as examples. In \S3, we introduce the limit intersection theory of stacks from an intersection theory of schemes and show that in the category of algebraic stacks which admit good systems of approximations, the limit theory is a weak intersection theory. In \S4, we prove the virtual pullback formula for all intersection theories of schemes and for their limit intersection theories. In \S5, we establish the cosection localization principle for all intersection theories of schemes and for their limit intersection theories. In \S6, we prove the virtual torus localization formula again for all intersection theories of schemes and for their limit intersection theories.

\medskip

All schemes and stacks in this paper are algebraic, quasi-separated, locally of finite type and defined over a field $\bk$ of characteristic zero. {The hypothesis of characteristic zero is used in many places, especially in the theory of algebraic cobordism (Theorem \ref{2.2}) and the blowup sequence (Lemma \ref{4.55}).}

{Following the notations in \cite{Ful}, the algebraic K-theory of coherent sheaves on a stack $X$ will be denoted by $K_0(X)$ and the algebraic K-theory of vector bundles will be denoted by $K^0(X)$.}

We thank Jinhyun Park, Feng Qu, Junliang Shen, Amalendu Krishna and Jeremiah Heller for useful discussions and comments.

\bigskip

\section{Intersection theories of algebraic stacks}\label{s2}
In this section, we introduce the notion of an intersection theory for algebraic stacks and discuss examples.  We also introduce the notion of a weak intersection theory. 

\medskip

%The model for all intersection theories is Bezout's theorem. For the projective space $\PP^n$, we assign the graded abelian group
%$$A_*(\PP^n)=\oplus_{i=0}^n A_i(\PP^n)=\ZZ^{n+1}$$
%equipped with the intersection product
%$$A_i(\PP^n)\times A_j(\PP^n)\mapright{\times} A_{i+j}(\PP^n\times \PP^n)\mapright{\Delta^*} A_{i+j-n}(\PP^n)$$
%where the first map is $\xi\otimes \eta\mapsto [\xi\times \eta]$ and the second map is the intersection with the diagonal, or the pullback by the diagonal. The intersection pairing is defined by 
%$$\xi\cdot \eta=p_*\Delta^*(\xi\times \eta)\in \ZZ$$
%where $p_*:A_*(\PP^n)\to A_*(\spec \bk)=\ZZ$ is the projective pushforward by  $p:\PP^n\to \spec \bk$.
%
%\medskip 

%The calculus of Bezout's theorem was further extended by Schubert and a solid mathematical foundation was provided by Fulton and MacPherson in 1970s (cf. \cite{Ful}), based on the method of degeneration to normal cone and Gysin maps, as follows. 

Let us summarize the Fulton-MacPherson intersection theory in \cite{Ful}. 
For each scheme $X$, we have a graded abelian group, the Chow group $C\!H_*(X)$ of algebraic cycles $\sum_i n_i[\xi_i]$ modulo rational equivalences \cite[\S1.3]{Ful}. These Chow groups have following structures.
\begin{itemize}
\item \underline{Projective pushforward} \cite[\S1.4]{Ful}: For a projective $f:X\to Y$, we have a pushforward map 
$$f_*:C\!H_*(X)\lra C\!H_*(Y),\quad [\xi]\mapsto \mathrm{deg}(f|_{\xi})\cdot[f(\xi)].$$
\item \underline{Smooth pullback}\footnote{In \cite{Ful}, flat pullbacks and proper pushforwards are constructed for Chow groups. However, we will confine ourselves to smooth pullbacks and projective pushforwards in this paper because general intersection theories such as algebraic cobordism may not have flat pullbacks or proper pushforwards.} \cite[\S1.7]{Ful}: For a smooth morphism $f:X\to Y$ of constant relative dimension $e$, we have a pullback map $$f^*:C\!H_*(Y)\lra C\!H_{*+e}(X),\quad [\eta]\mapsto [f^{-1}\eta].$$
\item  \underline{Exterior product} \cite[\S1.10]{Ful}:  For schemes $X$ and $Y$, we have a map 
$$\times:C\!H_*(X)\otimes C\!H_*(Y)\lra C\!H_*(X\times Y),\quad [\xi]\otimes[\eta]\mapsto [\xi\times \eta].$$
\item  \underline{Intersection with divisor} \cite[\S2.3]{Ful}: For a pseudo-divisor $D=(L,s,Z)$ of a scheme $X$, we have a map 
$$D\cdot : C\!H_*(X) \lra C\!H_{*-1}(Z).$$ 
\end{itemize}

These satisfy natural compatibility conditions and the following.
\begin{itemize}
\item \underline{Excision sequence} \cite[\S1.8]{Ful}: For a closed immersion $\imath:Z\to X$ and its complement $\jmath:X- Z\to X$, we have an exact sequence  
$$C\!H_*(Z)\mapright{\imath_*} C\!H_*(X)\mapright{\jmath^*} C\!H_*(X- Z)\lra 0.$$
\item \underline{Extended homotopy} \cite[\S1.9]{Ful}: If $E$ is a vector bundle of rank $r$ on $X$ and $p:V\to X$ is an $E$-torsor, then the smooth pullback $p^*:C\!H_*(X)\to C\!H_{*+r}(V)$ is an isomorphism. 
\end{itemize}
From these, the following important maps follow.
\begin{itemize}
\item  \underline{Specialization homomorphism} \cite[\S5.2]{Ful}: For a closed immersion $X \hookrightarrow Y$, consider the deformation space $M^{\circ}_{X/Y}$ obtained by blowing up $Y\times \PP^1$ along $X\times \{0\}$ and deleting the strict transformation of $Y\times \{0\}$. Then $M^{\circ}_{X/Y}$ is flat over $\PP^1$ and the fiber over $t=0$ (resp. $t\ne 0$) is the normal cone $C_{X/Y}$ of $X$ in $Y$ (resp. $Y$). The commutative diagram
\beq \label{2.16}
\xymatrix{
C\!H_{*+1}(C_{X/Y}) \ar[r]^{\imath_*} \ar[rd]_{c_1(\sO_{C_{X/Y}})=0} & C\!H_{*+1} (M^{\circ}_{X/Y}) \ar[r]^{\jmath^*}\ar[d]^{C_{X/Y}\cdot} \ar[rd]^{Y\cdot} & C\!H_{*+1} (Y\times \A^1) \ar[r] \ar[d]^{\cong}  & 0\\
& C\!H_* (C_{X/Y}) & C\!H_*(Y) \ar[l]^{\mathrm{sp}}}
\eeq
gives us the specialization homomorphism $\mathrm{sp} : C\!H_*(Y) \to C\!H_*(C_{X/Y})$.
\item \underline{Refined Gysin pullback} \cite[\S6.2]{Ful}: Consider a Cartesian square
$$\xymatrix{
X\ar[r]^g\ar[d]_q & Y\ar[d]^p\\
Z\ar[r]^f & W
}$$
with $f : Z \hookrightarrow W$ a regular immersion of codimension $c$. Then  the induced closed immersion $C_{X/Y}\hookrightarrow {q}^*N_{Z/W}$ of the normal cone into the pullback of the normal bundle of $Z$ in $W$ gives us the refined Gysin pullback by the composition
\beq \label{2.17}
f^! : C\!H_*(Y)\mapright{\mathrm{sp}} C\!H_*(C_{X/Y})\mapright{} C\!H_*(q^*N_{Z/W})\mapright{\cong} C\!H_{*-c}(X)
\eeq
where the last map is given by the extended homotopy. When $p=\id_Y$ and 
$q=\id_X$, we will write $f^*=f^!$.
\item \underline{Intersection product} \cite[\S8.3]{Ful}: If $X$ is a smooth scheme of dimension $n$, the diagonal embedding $\Delta:X\to X\times X$ is regular so that it gives us the intersection product
$$C\!H_i(X)\otimes C\!H_j(X)\mapright{\times} C\!H_{i+j}(X\times X)\mapright{\Delta^*} C\!H_{i+j-n}(X).$$
\end{itemize}

From the above summary, it seems reasonable to codify the notion of an intersection theory for algebraic stacks as follows.

\begin{defi}[Admissible category] 
\label{2.71}
A full {sub-2-category} $\bcV$ of the 2-category $\mathbf{St}_\bk$ of all algebraic stacks, quasi-separated and locally of finite type over $\bk$, is called \emph{admissible} if \begin{enumerate}
\item $\emptyset, \spec \bk\in \bcV$; 
\item $X\sqcup Y\in \bcV$ if $X, Y\in \bcV$;
\item $X\times_{Z} Y\in \bcV$ if $X\to Z$ and $Y\to Z$ are morphisms in $\bcV$;
\item $X\in\bcV$ when $X\to Y$ is a quasi-projective morphism and $Y\in \bcV$.
\end{enumerate}
 \end{defi}
For instance, the full subcategory $\mathbf{Sch}_\bk$ of all schemes of finite type over $\bk$ is admissible. Likewise, the subcategory $\mathbf{QSch}_\bk$ of all  quasi-projective schemes over $\bk$ is admissible. 
The subcategory $\mathbf{DM}_\bk$ of all \DM stacks over $\bk$ is also admissible.
By (1) and (4), all admissible categories $\bcV$ of algebraic stacks contain $\mathbf{QSch}_\bk$.

\begin{defi}[Intersection theory for stacks] 
\label{2.1} Let $\bcV$ be an admissible category of algebraic stacks in $\mathbf{St}_\bk$.
An \emph{intersection theory for $\bcV$} consists of
\begin{enumerate}
\item[(i)] a $\ZZ$-graded abelian group $H_*(X)$ for $X\in \bcV$ and $\mathbf{1} \in H_0(\spec{\bk})$;
\item[(ii)] (projective pushforward) a graded homomorphism 
$$f_*:H_*(X)\lra H_*(Y)$$
for a projective morphism $f:X\to Y$;
\item[(iii)] (smooth pullback) a graded homomorphism 
$$f^*:H_*(Y)\lra H_{*+e}(X)$$
for a smooth morphism $f:X\to Y$ of constant relative dimension $e$;
\item[(iv)] (refined Gysin pullback) a graded homomorphism
$$f^! : H_*(Y') \lra H_{*-c}(X')$$
for a Cartesian square
\[\xymatrix{
X'\ar[r]^{f'}\ar[d]_{g'} & Y'\ar[d]^g\\
X\ar[r]^f & Y
}\]
where $f : X \hookrightarrow Y$ is a regular immersion of constant codimension $c$;
\item[(v)] (exterior product) a bilinear graded homomorphism 
$$\times: H_*(X)\otimes H_*(Y)\lra H_*(X\times Y), \quad u\otimes v\mapsto u\times v$$
which is commutative and associative with unit $\mathbf{1}$.
\end{enumerate}

For a smooth $X$, by pulling back $\mathbf{1}$ to $X$ by $p:X\to \spec \bk$, we obtain the \emph{fundamental class} $[X]=p^*\mathbf{1}$ of $X$. 
  
By \emph{(iv)}, when $\imath:D=s^{-1}(0)\hookrightarrow X$ is an effective Cartier divisor for a nonzero section $s$ of a line bundle $L$ on $X$, we have the \emph{intersection product} 
$$\imath^!=D\cdot : H_*(X)\lra H_{*-1}(D)$$ with $D$. 
When $L$ is a line bundle on $X$, the zero section $0:X\to L$ defines the first Chern class homomorphism by 
$$c_1(L)=0^!\circ 0_*:H_*(X)\lra H_{*-1}(X).$$

The above items \emph{(i)-(v)} should satisfy the following conditions:
\begin{enumerate}
\item $H_*(\emptyset)=0$ and  
the projective pushforwards give us an isomorphism
$$H_*(X) \oplus X_*(Y) \mapright{} H_*(X\sqcup Y),\quad  X,Y\in \bcV.$$
\item $(\id_{X})_*=\id_{H_*(X)}$ and for projective  $f:X\to Y$ and $g:Y\to Z$,
$$g_*\circ f_*=(g\circ f)_*.$$
\item $(\id_{X})^*=\id_{H_*(X)}$ and for smooth $f:X\to Y$ and $g:Y\to Z$,
$$f^*\circ g^*=(g\circ f)^*.$$
\item For a Cartesian square
$$\xymatrix{
X\ar[r]^g\ar[d]_q & Y\ar[d]^p\\
Z\ar[r]^f & W
}$$
with $f$ projective and $p$ smooth, we have 
$$p^*\circ f_*=g_*\circ q^*.$$
\item For a diagram of Cartesian squares
$$\xymatrix{
X'' \ar[r]^{f''} \ar[d]_{g''} & Y''\ar[r]\ar[d]^{g'} & S \ar[d]^g\\
X'\ar[r]^{f'}\ar[d]_h & Y' \ar[r] \ar[d] & T \\
X\ar[r]^f & Y
}$$
with $f$ a regular immersion, the following hold:
\begin{enumerate}
\item If $g'$ is projective, then
$$ f^! \circ {g'}_* = {g''}_* \circ f^!.$$
\item If $g'$ is smooth, then
$$f^! \circ {g'}^* = {g''}^*\circ f^!.$$
\item If $g$ is a regular immersion, then
$$g^!\circ f^! = f^! \circ g^!.$$
\item (Excess intersection formula) If $f'$ is also a regular immersion and $E=h^*N_{X/Y}/N_{X'/Y'}$ is the excess normal bundle, then
$$f^!= \eta \circ {f'}^!$$
where $0_E : X'' \to g''^*E$ is the zero section and $\eta=0_E^! \circ {0_E}_*$.
\end{enumerate}
\item For a diagram of Cartesian squares
$$\xymatrix{
X'\ar[r]^h \ar[d] & Y' \ar[r]^k \ar[d] & Z' \ar[d] \\
X\ar[r]^f & Y \ar[r]^g & Z,
}$$
if $f$ and $g$ are regular immersions, then
$$f^! \circ g^! = (g\circ f)^!.$$
%\item If a regular immersion $f : X \to Y$ factors as $f=h\circ g$, with a closed immersion $g : X \to Z$ and a smooth morphism $h : Z \to Y$, then
%$$f^! = g^! \circ h^*.$$
\item If a local complete intersection morphism $f:X\to Y$ of constant relative dimension $d$ factors as $f=h\circ g$, with $g:X\to Z$ a regular closed immersion and $h:Z\to Y$ a smooth morphism, then the lci pullback of $f$ defined by
$$f^*:=g^!\circ h^*:H_*(Y) \lra H_{*+d}(X)$$ is independent of the factorization $f=h\circ g$. 
\item For projective morphisms $f$ and $g$,
$$\times \circ (f_* \otimes g_*) = (f\times g)_* \circ \times.$$
\item For smooth morphisms $f$ and $g$,
$$\times \circ (f^* \otimes g^*) = (f\times g)^* \circ \times.$$ 
\item For two Cartesian squares
$$\xymatrix{
X_1\ar[r]^h\ar[d] & Y_1\ar[d]\\
Z_1\ar[r]^f & W_1
}\qquad 
\xymatrix{
X_2\ar[r]^k\ar[d] & Y_2\ar[d]\\
Z_2\ar[r]^g & W_2
}$$
with $f$, $g$ regular immersions, $\times \circ (f^!\otimes g^!)=(f\times g)^!\circ \times.$
\item[(EH)] \underline{Extended homotopy}: For a vector bundle $E$ of rank $r$ over $X$ and an $E$-torsor $p:V\to X$, the pullback 
$p^*:H_*(X)\mapright{} H_{*+r}(V)$ 
is an isomorphism.
\item[(PB)] \underline{Projective bundle formula}: For a vector bundle $E$ of rank $r$ over $X$ and the associated projective bundle $p:\PP E\to X$, we have an isomorphism
\beq\label{2.24}
\bigoplus_{i=0}^{r-1}H_{*-r+1+i}(X)\mapright{} H_* (\PP E), \quad (\xi_i)\mapsto \sum_i c_1(\sO_{\PP E}(1))^i\cdot(p^*\xi_i).
\eeq
\item[(Exc)] \underline{Excision sequence}: For a closed immersion $\imath:Z\hookrightarrow X$ and its complement $\jmath:X-Z\to X$, we have an exact sequence 
\beq\label{2.19}
H_*(Z)\mapright{\imath_*} H_*(X)\mapright{\jmath^*} H_*(X- Z)\lra 0.
\eeq
\item[(DS)] \underline{Detection by smooth schemes}: For any \emph{scheme} $X\in \mathbf{QSch}_\bk$, the projective pushforwards $f_*$ for projective morphisms $f:Y\to X$ from \emph{smooth quasi-projective schemes} $Y$ give us an isomorphism
\beq\label{4.30} \varinjlim_{Y\to X}H_*(Y) \lra H_*(X)\eeq
where the limit is taken over all projective morphisms $Y\to X$ with $Y$ smooth and the transition maps of the limit are given by the projective pushforwards of projective morphisms $Y\to Y'$ over $X$.
%\item[(DS)] \underline{Detection by smooth schemes}: For any \emph{scheme} $X\in \mathbf{QSch}_\bk$, the projective pushforwards $f_*$ for projective morphisms $f:Y\to X$ from \emph{smooth quasi-projective schemes} $Y$ give us an isomorphism 
%\beq\label{4.30} \varinjlim_{Y\to X}H_*(Y) \lra H_*(X)\eeq
%where the limit is taken over all projective morphisms $Y\to X$ with $Y$ smooth. %schemes over $X$. 
\end{enumerate}
\end{defi}

\bigskip

For any intersection theory $H_*$ for $\bcV\subset \mathbf{St}_\bk$, we can derive the following from Definition \ref{2.1}.

\medskip 
\noindent\underline{Chern classes}: For a vector bundle $E$ of rank $r$ over a scheme $X$, the {$i$-th Chern class}
$$c_i(E) : H_*(X) \lra H_{*-i}(X)$$ 
can be defined by the splitting principle (cf. \cite[Remark 4.1.2]{LeMo}). Chern classes commute with projective pushforwards, smooth pullbacks, exterior products, {refined Gysin pullbacks} and other Chern classes. 
The splitting principle gives us  the Whitney sum formula and the self intersection formula $$0_E^! \circ {0_E}_* = c_r(E)$$ where $0_E : X \hookrightarrow E$ is the zero section. 
The items (5) and (7) in Definition \ref{2.1} imply that for any effective  Cartier divisor $\imath:D\hookrightarrow X$ with $L=\sO_X(D)$, we have $$\imath_*\circ\imath^!=\imath_*\circ D\cdot =c_1(L).$$ 
When $\imath:X\to Y$ is a regular closed immersion of constant codimension $m$, by deforming $Y$ to the normal bundle $N_{X/Y}$ of $X$ in $Y$, we have 
\beq\label{6.51}\imath^!\circ \imath_*=c_m(N_{X/Y}).\eeq 
Furthermore, if $p : \PP E \to X$ is the associated projective bundle, then  
{$$\sum_i (-1)^i c_i(p^*E\dual)\cdot c_1(\sO_{\PP E}(1))^{r-i}=0.$$}

\medskip 
\noindent\underline{Specialization homomorphism}: For a \DM type morphism $f:X \to Y$ of algebraic stacks, we can define the \emph{specialization homomorphism}
\beq \label{2.18}
\mathrm{sp}_{X/Y} : H_*(Y) \lra H_*(\fC_{X/Y})
\eeq
where $\fC_{X/Y}$ denotes the intrinsic normal cone of $f$ if $X$, $Y$ and the deformation space $M^\circ_{X/Y}$  lie in $\bcV$. Here a morphism $f$ is of \DM type if  
the diagonal $\Delta_f:X\to X\times_Y X$ is unramified and the intrinsic normal cone $\fC_{X/Y}$ is defined as the cone stack over $X$ associated to the groupoid $[C_{R/S}\rightrightarrows C_{U/V}]$ 
%which is etale locally $[C_{U/V}/T_{V/Y}|_U]$ 
for a commutative diagram
$$\xymatrix{
U\ar[r]\ar[d] & V\ar[d]\\
X\ar[r]^f &Y}$$
where the vertical arrows are smooth {surjective} and the top horizontal arrow $U\to V$ is a closed immersion of schemes
with $R=U\times_XU$ and $S=V\times_YV$ (cf. \cite[Definitions 2.22 \& 2.30]{Man}). 

Indeed, we have the deformation space $M^{\circ}=M^{\circ}_{X/Y}$ which is flat over $\PP^1$ and whose fiber over $0$ is the intrinsic normal cone $C=\fC_{X/Y}$ while $M^\circ-C=Y\times \bbA^1$ by \cite[\S5.1]{Kre}. Let us assume that $X, Y, M^\circ\in \bcV$. 

Applying (Exc) to $C\subset M^\circ$, we obtain a commutative diagram
$$\xymatrix{
H_{*+1}(C) \ar[r]^{\imath_*} \ar[rd]_{c_1(\sO_C)=0} & H_{*+1} (M^{\circ}) \ar[r]^{\jmath^*}\ar[d]_{\imath^!}  & H_{*+1} (Y\times \A^1) \ar[r]  & 0\\
& H_* (C) & H_*(Y) \ar[u]_{p^*}}$$
where $p:Y\times \bbA^1\to Y$ is the projection and $C\cdot$ is the intersection with the divisor $C$. 
The specialization homomorphism is defined by
$$\mathrm{sp}_{X/Y}=\imath^!\circ (\jmath^*)^{-1}\circ p^*.$$
%\begin{defi} An admissible category $\bcV\subset \mathbf{St}_\bk$ is called \emph{big} if
%\begin{enumerate}
%\item for any \DM type morphism $f:X\to Y$, the deformation space $M^\circ_{X/Y}$ lies in $\bcV$ and
%\item for a vector bundle stack $\cE$ over $X\in \bcV$, $\cE\in \bcV$. 
%\end{enumerate}
%\end{defi}

\medskip 
\noindent\underline{Refined Gysin pullbacks by specialization}:  Consider a Cartesian square
$$\xymatrix{
X\ar[r]^g\ar[d]_q & Y\ar[d]^p\\
Z\ar[r]^f & W
}$$
with $f : Z \hookrightarrow W$ a regular immersion of codimension $c$. Then the induced closed immersion $\imath:C_{X/Y}\hookrightarrow q^*N_{Z/W}$ of cones gives us the homomorphism
\beq \label{2.26}
f^! : H_*(Y)\mapright{\mathrm{sp}} H_*(C_{X/Y})\mapright{\imath_*} H_*(q^*N_{Z/W})\mapright{0^!} H_{*-c}(X)
\eeq
where the last map is the Gysin pullback by the zero section or the isomorphism by the extended homotopy. 
We leave it as an exercise to deduce that \eqref{2.26} coincides with the refined Gysin map for $f$ from Definition \ref{2.1}. 

\medskip 
\noindent\underline{Formal group law for schemes}: There is an $F_H(u,v) \in H_*(\Spec \bk)[[u,v]]$ such that for line bundles $L_1, L_2$ over any $X\in \mathbf{QSch}_\bk$, 
\beq\label{2.74} c_1(L_1\otimes L_2)=F_H(c_1(L_1),c_1(L_2))\eeq
by the same proof in \cite[Proposition 5.2.4]{LeMo}. 
As a formal group law of rank one, $F_H(u,v)$ satisfies the properties like
$F_H(u,0)=u$, $F_H(u,v)=F_H(v,u)$ and $F_H(u, F_H(v,w))=F_H(F_H(u,v),w)$. 
As observed in \cite[\S3.1]{LeMo}, there is a power series $g(u)$ with constant term $-1$ such that
\beq\label{4.51} F_H(u, ug(u))=0.\eeq
Note that the map $$c_1:\mathrm{Pic}(X)\lra \Hom(H_*(X), H_{*-1}(X)), \quad L\mapsto c_1(L)$$
is not a group homomorphism unless $F_H(u,v)=u+v$. It is a natural transformation of sets. 

\bigskip

From the above, the following is immediate. 
\begin{lemm}\label{2.27}
Any intersection theory $H_*$ for the category $\mathbf{QSch}_\bk$ of quasi-projective schemes over $\bk$  is an oriented Borel-Moore homology theory in the sense of Levine-Morel \cite[Definition 5.1.3]{LeMo}.
\end{lemm}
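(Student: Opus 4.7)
The plan is a straightforward verification that every axiom in Levine--Morel's Definition 5.1.3 of an oriented Borel--Moore homology theory is subsumed by Definition \ref{2.1} restricted to $\bcV = \mathbf{QSch}_\bk$. All the data demanded by Levine--Morel are already given: the graded abelian group $H_*(X)$, the projective pushforward $f_*$, the smooth pullback $f^*$ with its degree shift, the exterior product $\times$, and the fundamental class $[X] = p_X^*\mathbf{1} \in H_{\dim X}(X)$ of a smooth quasi-projective scheme $X$, defined via the structure map $p_X : X \to \Spec\bk$ exactly as in the paragraph following Definition \ref{2.1}.

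I would then run down the Levine--Morel axioms in order and identify each with a clause of Definition \ref{2.1}. Functoriality of the projective pushforward and of the smooth pullback are conditions (2) and (3). The base change axiom for a Cartesian square with one leg projective and the other smooth is condition (4). The disjoint union axiom is condition (1). The associativity, commutativity, and unitality of the exterior product, together with its compatibility with projective pushforwards and smooth pullbacks, are the content of (v) and of conditions (8) and (9). The identity $[X] \times [Y] = [X\times Y]$ for smooth quasi-projective $X, Y$ follows from condition (9) applied to the structure morphisms $p_X, p_Y$, combined with the unitality $\mathbf{1} \times \mathbf{1} = \mathbf{1}$ under the canonical identification $\Spec\bk \times \Spec\bk \cong \Spec\bk$. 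Finally, the two substantive axioms singled out by Levine--Morel, namely the projective bundle formula and the extended homotopy property, appear verbatim as conditions (PB) and (EH).

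The main (and essentially only) point requiring attention is checking that the grading conventions match: Levine--Morel use dimension-indexed grading with $f^*$ shifting degree by the relative dimension $+e$ and $\times$ being additive in degree, and these conventions coincide with those of Definition \ref{2.1}. No refined Gysin pullback, excision, or detection axiom is needed for this lemma; they are part of the stronger structure of an intersection theory but are not demanded by the Levine--Morel definition. The whole argument is therefore a dictionary between two axiom lists, which is why the statement is advertised as ``immediate,'' and I would present it as such rather than reproving any of the underlying compatibilities.
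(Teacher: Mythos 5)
Your dictionary between the two axiom lists is mostly in order, but the proposal has a concrete gap: you assert that ``no refined Gysin pullback, excision, or detection axiom is needed for this lemma,'' and that is precisely where the argument is incomplete. Levine--Morel's Definition 5.1.3 contains, in addition to the structural axioms (functoriality, base change, disjoint union), the exterior product axioms, (PB), and (EH), a further axiom labelled (CD) (the cellular-decomposition axiom concerning divisor classes on products of projective spaces). Axiom (CD) does not appear verbatim among the clauses of Definition \ref{2.1}, so it cannot be disposed of as a mere re-labelling; it has to be derived. The paper's own proof is almost as short as yours, but it flags exactly this point: it reads ``The axiom (CD) in \cite[Definition 5.1.3]{LeMo} follows easily from the excision sequence \eqref{2.19}. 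All the other axioms are listed above.'' In other words, excision \emph{is} used, and (CD) is the one axiom requiring a separate argument. To repair your proposal you should add (CD) to the checklist and sketch why it follows from (Exc) (together with the divisor-intersection and Chern-class formalism already set up after Definition \ref{2.1}); the remainder of your run-down then matches the paper's intended proof.
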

\begin{proof} The axiom (CD) in \cite[Definition 5.1.3]{LeMo} follows easily from the excision sequence \eqref{2.19}. All the other axioms are listed above.  \end{proof}

\begin{exam}[Chow groups] As we summarized above, 
by \cite{Ful}, Chow groups $C\!H_*(X)$ together with smooth pullbacks, projective pushforwards and so on form an intersection theory for the category $\mathbf{Sch}_\bk$ of schemes of finite type over $\bk$. By \cite{Vist}, Vistoli's Chow groups with coefficients in $\QQ$ form an intersection theory for the category $\mathbf{DM}_\bk$ of \DM stacks. By \cite{Kre}, Kresch's Chow groups form an intersection theory for algebraic stacks of finite type which admit stratifications by quotient stacks. A special case of \cite{Kre} is the equivariant Chow theory of Edidin-Graham in \cite{EdGr} which is an intersection theory for quotient stacks. 
\end{exam}

\begin{exam}[Algebraic K-theory]\label{2.65}
Let $K_0(X)$ be the Grothendieck group of coherent sheaves on an algebraic stack $X$. Consider the graded group
$$K_0(X)\otimes_{\ZZ}\ZZ[\beta,\beta^{-1}]=\bigoplus_{n\in \ZZ} K_0(X)\cdot \beta^n$$ 
where $\beta$ is a formal variable of degree $1$ and $K_0(X)$ has degree $0$. For a smooth morphism $f:X\to Y$ of constant relative dimension $d$, the smooth pullback is defined by
\[ f^*([E]\cdot \beta^n)= [f^*E]\cdot \beta^{n+d}.\]
The refined Gysin map $f^!$ is defined by the left derived tensor product. 
For a projective morphism $f:X\to Y$, the projective pushforward is 
\[ f_*([E]\cdot \beta^n)=[Rf_*E]\cdot \beta^n.\] 
For $L\in \mathrm{Pic}(X)$, the first Chern class homomorphism is defined by
\[ c_1(L)(\xi)=(\xi -\xi\otimes [L^\vee])\cdot \beta^{-1}\]
and the formal group law is $F_K(u,v)=u+v-\beta uv.$ 
The exterior product is 
\[ ([E]\cdot \beta^n)\times ([E']\cdot \beta^m)=[E\boxtimes E']\cdot \beta^{m+n}.\]
The excision sequence \eqref{2.19} holds for $K_0(X)$ by \cite[Corollary 15.5]{LaMo} and the proof of \cite[Proposition 7]{BoSe}. {Also the axiom (DS) follows from \cite{Dai}.} All the other axioms in Definition \ref{2.1} are easy to check except (EH) and (PB). These last two axioms are proved in \cite{BoSe} for schemes.
% and in \cite{EdGr2} for quotient stacks. 
Therefore $K_0(-)\otimes_{\ZZ}\ZZ[\beta,\beta^{-1}]$ is an intersection theory for schemes.
% and quotient stacks. 
It is probably true that (EH) and (PB) hold for $K_0$ of all algebraic stacks but we don't know a proof. 
\end{exam}

{In fact, there are infinitely many intersection theories for quasi-projective
schemes because we can always construct a new theory by twisting a given theory with Todd classes (cf. \cite[7.4.2]{LeMo}). However there is a universal theory.}

\begin{theo}[Algebraic cobordism] \label{2.2} \cite[Theorem 7.1.3]{LeMo}
%\red{Over a field $\bk$ of characteristic zero}, 
There is an intersection theory $\Omega_*(X)$ for quasi-projective schemes $X\in \mathbf{QSch}_\bk$, called the \emph{algebraic cobordism}, which is generated by cobordism cycles 
$$[f:Y\lra X]$$ 
where $f$ is a projective morphism from a smooth quasi-projective scheme $Y$, with relations from the double point degenerations in \cite{LePa}.  The algebraic cobordism theory $\Omega_*$ is universal in the sense that for any intersection theory $H_*$ for $\mathbf{QSch}_\bk$, there is a unique homomorphism $$\Omega_*(X)\lra H_*(X)$$ 
that preserves projective pushforwards, smooth pullbacks, refined Gysin pullbacks and exterior products. 
\end{theo}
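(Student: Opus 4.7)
The plan is to invoke Levine-Morel's construction of algebraic cobordism in \cite{LeMo} and then verify two things: (a) it satisfies all the axioms of Definition \ref{2.1}, and (b) the stated universality follows from combining Lemma \ref{2.27} with their universal property among oriented Borel-Moore homology theories. Recall that $\Omega_*(X)$ is presented as the free abelian group on cobordism cycles $[f:Y\to X]$ with $Y$ smooth quasi-projective and $f$ projective, graded by $\dim Y$, modulo the double point relations of \cite{LePa}; projective pushforward is composition of cycles, smooth pullback is base change (which preserves smoothness of $Y$), and the exterior product is the Cartesian product of cycles, with unit $\mathbf{1}=[\id_{\Spec\bk}]\in\Omega_0(\Spec\bk)$.

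For the axioms of Definition \ref{2.1}, the extended homotopy (EH), projective bundle formula (PB), and excision (Exc) are all established in Chapters 3--7 of \cite{LeMo}. The detection axiom (DS) holds essentially by construction, since every generator of $\Omega_*(X)$ already has the form $f_*[Y]$ with $Y$ smooth quasi-projective, giving surjectivity of the colimit map in \eqref{4.30}, while the imposed double point relations themselves arise as pushforwards of cobordism relations from smooth schemes, giving injectivity. The refined Gysin pullback $f^!$ along a regular immersion is defined via the specialization homomorphism as in \eqref{2.26}, and the compatibility axioms (5)--(10) are verified by the standard deformation-to-the-normal-cone arguments adapted from \cite[Chapter 6]{Ful}, carried out in the oriented Borel-Moore framework in \cite[Chapter 6]{LeMo}.

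For the universal property, let $H_*$ be any intersection theory for $\mathbf{QSch}_\bk$. By Lemma \ref{2.27}, $H_*$ is an oriented Borel-Moore homology theory in the sense of Levine-Morel, so \cite[Theorem 7.1.3]{LeMo} yields a unique natural transformation $\vartheta:\Omega_*\to H_*$ preserving projective pushforwards, smooth pullbacks, exterior products, and first Chern classes. Compatibility with refined Gysin pullbacks is then automatic: both $f^!_{\Omega}$ and $f^!_{H}$ are assembled from the specialization map, pushforward along the closed immersion $C_{X'/Y'}\hookrightarrow q^*N_{Z/W}$, and the inverse of the extended-homotopy isomorphism of the zero section, each of which commutes with $\vartheta$.

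The main obstacle I anticipate is verifying the Gysin functoriality $g^!\circ f^! = (g\circ f)^!$ for composable regular immersions, together with the excess intersection formula (5)(d); these require a double deformation space argument that must be carried out entirely at the level of operators on $\Omega_*$, without access to cycle-level manipulations, using only (EH), (Exc), and the compatibility of the specialization map with smooth pullback and projective pushforward. The universal formal group law $F_\Omega\in\Omega_*(\Spec\bk)[[u,v]]$ enters only implicitly, as the expression for $c_1(L_1\otimes L_2)$ guaranteed by \eqref{2.74}.
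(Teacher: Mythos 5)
Your proposal is correct and follows the same route as the paper: cite Levine--Morel's Theorem 7.1.3 for the universal property among oriented Borel--Moore homology theories, invoke Lemma \ref{2.27} to see that every intersection theory on $\mathbf{QSch}_\bk$ is such a theory, and note that $\Omega_*$ itself satisfies all the axioms of Definition \ref{2.1} by the results in \cite{LeMo}. The paper's proof is just a terser version of this; the extra detail you supply (why (DS) holds for $\Omega_*$, and why preservation of refined Gysin pullbacks is automatic once projective pushforward, smooth pullback, exterior product and first Chern class are preserved — since $f^!$ is assembled from the specialization map, a pushforward, and the extended-homotopy inverse, all of which $\vartheta$ respects) is sound and fills in precisely what the paper leaves implicit.
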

\begin{proof} By \cite[Theorem 7.1.3]{LeMo}, the algebraic cobordism is universal among oriented Borel-Moore homology theories for $\mathbf{QSch}_\bk$. Since every intersection theory for $\mathbf{QSch}_\bk$ is an oriented Borel-Moore homology by Lemma \ref{2.27} and the algebraic cobordism {satisfies} all the axioms for an intersection theory for $\mathbf{QSch}_\bk$ by \cite{LeMo}, the theorem is a direct consequence of \cite[Theorem 7.1.3]{LeMo}. 
\end{proof}

Sometimes the excision sequence (Exc) in Definition \ref{2.1} may not be available but a weaker condition may hold. So we introduce the following. 
\begin{defi}[Weak intersection theory for stacks]\label{2.70}
A \emph{weak intersection theory} for an admissible category $\bcV\subset \mathbf{St}_\bk$ consists of (i)-(v) in Definition \ref{2.1} satisfying all the axioms except that (Exc) is replaced by 
\begin{enumerate}
\item[(WEx)] \underline{Weak excision}: For a closed immersion $\imath:Z\hookrightarrow X$ and its complement $\jmath:X-Z\to X$ in $\bcV$, the smooth pullback
$$\jmath^*:H_*(X)\lra H_*(X-Z)$$
is surjective. If $\imath$ is regular of codimension $r$ and the normal bundle $N_{Z/X}$ is trivial,
then there is a unique homomorphism $$\lambda_{Z/X}:H_*(X-Z)\lra H_{*-r}(Z)$$ satisfying $\lambda_{Z/X}\circ \jmath^*=\imath^!:H_*(X)\lra H_{*-r}(Z).$
%Furthermore, $\lambda_{X/Z}$ commutes with projective pushforwards and smooth pullbacks in the sense that if 
%$$\xymatrix{
%Z'\ar[r] \ar[d]_p& X'\ar[d]^q\\
%Z\ar[r]^\imath & X}$$
%is a Cartesian diagram whose rows are regular closed immersions with trivial normal bundles of the same rank, then 
%$p_*\circ \lambda_{X'/Z'}=\lambda_{X/Z}\circ (q|_{X'-Z'})_*$ (resp. $p^*\circ \lambda_{X/Z}=\lambda_{X'/Z'}\circ (q|_{X'-Z'})^*$)
%when $q$ is projective (resp. smooth). 
\end{enumerate}
\end{defi}
It is straightforward that (Exc) implies (WEx) and hence an intersection theory is a weak intersection theory. 

\medskip

\noindent \underline{Specialization}: For a weak intersection theory $H_*$ for 
%a \emph{big} admissible category 
$\bcV$, we still have the specialization homomorphism \eqref{2.18} as the composite
\beq\label{2.80} \mathrm{sp}_{X/Y}=\lambda_{\fC_{X/Y}/M^\circ_{X/Y}}\circ pr_1^*
: H_*(Y)\lra H_{*+1}(Y\times \bbA^1)\lra  H_*(\fC_{X/Y})\eeq
for a \DM type morphism $f:X\to Y$ of algebraic stacks in $\bcV$ with $M^{\circ}_{X/Y}\in \bcV$, by applying
(WEx) to $\fC_{X/Y}\subset M^\circ_{X/Y}$. Here $M^\circ_{X/Y}$ is the deformation space 
from $Y$ to the  relative intrinsic normal cone $\fC_{X/Y}$ of $f$.

The specialization homomorphism commutes with projective pushforwards and smooth pullbacks.

\bigskip

\section{Limit intersection theory}\label{S4}

As we saw above, there are many interesting intersection theories for quasi-projective schemes in $\mathbf{QSch}_\bk$.
But it is extremely difficult to extend an intersection theory for $\mathbf{QSch}_\bk$ to an intersection theory for a larger category of algebraic stacks. 
However, there is a direct way to construct an extension as a \emph{weak} intersection theory for algebraic stacks as we will see in this section.

\subsection{Limit intersection theory}\label{S4.1}
In this subsection, we introduce the notion of limit theory of an intersection theory for quasi-projective schemes and define natural maps. 

\begin{defi}[Limit intersection theory]\label{2.5}
Let $H_*$ be an intersection theory for $\mathbf{QSch}_\bk$. 
To an algebraic stack $\cX\in \mathbf{St}_\bk$, the \emph{limit (weak) intersection theory} assigns  the inverse limit 
\beq \label{2.6} 
\cH_d(\cX)=\varprojlim_{t : T\to \cX} H_{d+d(t)}(T) 
\eeq
for each $d$, where $t$ is a smooth morphism of constant relative dimension $d(t)$ from a quasi-projective scheme $T$.
For a commutative diagram
$$\xymatrix{
T\ar[rr]^f\ar[dr]_t && T'\ar[dl]^{t'}\\
&\cX
}$$
with $t, t'$ smooth and $T, T'\in \mathbf{QSch}_\bk$, 
we use the lci pullback $$f^*:H_{d+d(t')}(T')\lra H_{d+d(t)}(T)$$ to define the limit. Observe that 
$$f:T\mapright{(\id, f)} T\times_\cX T'\mapright{pr_2} T'$$
is the composition of a {regular local immersion} and a smooth morphism.
\end{defi}

The limit theory immediately comes with the following. 

\medskip
\noindent \underline{Projective pushforward}:
For a projective morphism $f:\cX\to \cY$ of algebraic stacks, any smooth morphism $T'\to \cY$ from a quasi-projective scheme $T'$ induces a smooth morphism $T=T'\times_{\cY}\cX\to \cX$ from the quasi-projective scheme $T$ as well as a projective morphism $f_T:T\to T'$. Hence, the projective pushforward $(f_T)_*:H_*(T)\to H_*(T')$ for each $T'$ gives us the projective pushforward
\beq\label{2.7}
f_*:\cH_d(\cX)=\varprojlim_{t : T\to \cX} H_{d+d(t)}(T)\lra \varprojlim_{t' : T'\to \cY} H_{d+d(t')}(T')=\cH_d(\cY).
\eeq
%Note that for any morphism $u:T_1'\to T_2'$ of quasi-projective schemes over $\cY$, the Cartesian square
%\[\xymatrix{
%T_1'\times_\cY \cX \ar[r]\ar[d] & T_1'\ar[d]\\
%T_2'\times_\cY\cX \ar[r] & T_2'
%}\]
%is transversal.

\medskip
\noindent \underline{Smooth pullback}:
For a smooth morphism $f:\cX\to \cY$ of algebraic stacks of relative dimension $e$, any smooth morphism $t:T\to \cX$ with $T\in \mathbf{QSch}_\bk$ induces a smooth morphism $t'=f\circ t:T'=T\to \cY$ and hence we have the smooth pullback
\beq \label{2.8} 
f^*:\cH_d(\cY)=\varprojlim_{t' : T'\to \cY} H_{d+d(t')}(T')\lra \varprojlim_{t : T\to \cX} H_{d+d(t)+e}(T)=\cH_{d+e}(\cX).
\eeq 

\medskip
\noindent \underline{First Chern class}: 
For a line bundle $L$ over an algebraic stack $\cX$ and a smooth morphism $t:T\to \cX$ from a quasi-projective scheme $T$, we have the first Chern class $c_1(t^*L):H_d(T)\to H_{d-1}(T)$. This gives us the first Chern class homomorphism 
\beq\label{2.9}
c_1(L)=\varprojlim_{t:T\to \cX} c_1(t^*L): \cH_d(\cX)\lra \cH_{d-1}(\cX).
\eeq

\medskip
\noindent \underline{Extension of $H_*$}: 
When $X$ is a quasi-projective scheme, we have a canonical isomorphism
$$H_d(X)\cong \varprojlim_{t:T\to X} H_{d+d(t)}(T)=\cH_d(X)$$
because every smooth morphism $t:T\to X$ factors through $\id_X$.

\medskip
\noindent \underline{Terminal extension}: If $H_*$ is actually an intersection theory for an admissible category $\bcV\subset \mathbf{St}_\bk$, then there is a unique homomorphism
\beq\label{2.75} H_d(\cX)\lra \varprojlim_{t : T\to \cX} H_{d+d(t)}(T) =\cH_d(\cX)\quad \text{for }\cX\in \bcV\eeq
that preserves projective pushforwards, smooth pullbacks and {the first Chern classes.} Indeed, for any smooth $t:T\to \cX$, the smooth pullbacks $t^*:H_*(\cX)\to H_{*+d(t)}(T)$ define the desired homomorphism \eqref{2.75}.

\medskip
\noindent \underline{Compatibility}: It is straightforward that the items (1)-(4) in Definition \ref{2.1} hold for $\cH_*$.

\medskip
\noindent \underline{(EH), (PB) and (DS)}: The condition (DS) in Definition \ref{2.1} holds trivially for $\cH_*$.  (EH) and (PB) hold by Lemma \ref{2.11} below.

\begin{lemm}\label{2.11}
Let $f:\cX\to \cY$ be a smooth quasi-projective morphism of algebraic stacks. For each smooth morphism $t:T\to \cY$ from a quasi-projective scheme $T$, the induced morphism $T\times_\cY\cX\to \cX$ is a smooth morphism from a quasi-projective scheme $T\times_{\cY}\cX$. Then the natural map
$$\cH_d(\cX)=\varprojlim_{t':T'\to \cX} H_{d+d(t')}(T')\lra \varprojlim_{t:T\to \cY} H_{d+d(t)}(T\times_\cY\cX)$$
is an isomorphism for all $d$.
\end{lemm}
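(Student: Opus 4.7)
The plan is to construct an explicit two-sided inverse to the natural restriction map. An element of $\cH_d(\cX)$ is a compatible system $(\xi_{T',t'})$ indexed by smooth morphisms $t':T'\to\cX$ from quasi-projective schemes, while an element of the right-hand side is a compatible system $(\eta_{T,t})$ indexed by smooth $t:T\to\cY$ with $\eta_{T,t}\in H_{d+d(t)}(T\times_\cY\cX)$. The natural map sends $(\xi_{T',t'})$ to $(\xi_{T\times_\cY\cX,\,p_2})$; this is well-defined because every morphism $g:T_1\to T_2$ over $\cY$ induces $g\times_\cY\id_\cX:T_1\times_\cY\cX\to T_2\times_\cY\cX$ over $\cX$, exhibiting the RHS index category as a subsystem of the LHS one.

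To construct the inverse, given $(\eta_{T,t})$ and a smooth $t':T'\to\cX$, I set $u=f\circ t':T'\to\cY$, which is smooth of relative dimension $d(t')+e$ with $e=d(f)$. Consider the section $s_{t'}:=(\id_{T'},t'):T'\to T'\times_\cY\cX$ of the smooth projection $p_1:T'\times_\cY\cX\to T'$. Since a section of a smooth morphism is a regular closed immersion, $s_{t'}$ is of codimension $e$, and I define
\[ \xi_{T',t'}:=s_{t'}^{\,!}\bigl(\eta_{T',u}\bigr)\in H_{d+d(t')}(T'). \]
Note that $p_2\circ s_{t'}=t'$, so $s_{t'}$ is a morphism over $\cX$ in the LHS index category, whence $s_{t'}^{\,!}$ is a legitimate transition map of the LHS system.

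Three checks are then required. First, $(\xi_{T',t'})$ must be LHS-compatible: for $g:(T_1,t_1)\to(T_2,t_2)$ over $\cX$, $g$ is automatically over $\cY$; the identity $s_{t_2}\circ g=(g\times_\cY\id_\cX)\circ s_{t_1}$ is immediate from the definitions, and combining it with functoriality of lci pullbacks together with the RHS compatibility of $(\eta)$ yields $g^{\,!}\xi_{T_2,t_2}=\xi_{T_1,t_1}$. Second, applying the inverse and then the forward map to $(\xi)\in\cH_d(\cX)$ returns $(\xi)$: one recovers $s_{t'}^{\,!}\xi_{T'\times_\cY\cX,p_2}=\xi_{T',t'}$ by LHS compatibility of $(\xi)$ applied to the morphism $s_{t'}$. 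Third, applying the forward and then the inverse map to $(\eta)$ returns $(\eta)$: a direct check shows $(p_1\times_\cY\id_\cX)\circ s_{p_2}=\id_{T\times_\cY\cX}$, and RHS compatibility of $(\eta)$ under the morphism $p_1:T\times_\cY\cX\to T$ (which is over $\cY$, since $t\circ p_1=f\circ p_2$ on the fiber product) delivers the claim.

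The main obstacle is the careful bookkeeping of lci pullbacks in the various fiber products. The proof rests on the facts that a section of a smooth morphism is a regular closed immersion, so $s_{t'}^{\,!}$ is well-defined as a refined Gysin pullback in the sense of Definition \ref{2.1} (iv), and that lci pullbacks satisfy contravariant functoriality from the axioms of Definition \ref{2.1}. Once these are in hand, all compatibility and inversion checks reduce to elementary diagram chases in fiber products such as $T\times_\cY\cX$ and $T'\times_\cY\cX\times_\cY\cX$.
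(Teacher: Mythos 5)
Your proposal is correct and takes essentially the same route as the paper's proof: the paper also constructs the inverse by assigning to $(\eta_{T,t})$ and a smooth $t':T'\to\cX$ the pullback of $\eta_{T',\,f\circ t'}$ along the lci morphism $(\id,t'):T'\to T'\times_\cY\cX$ over $\cX$, which is exactly your section $s_{t'}$. You have merely spelled out the compatibility and inversion checks that the paper leaves to the reader; the only cosmetic difference is that the paper allows the transition maps to be lci pullbacks along regular \emph{local} immersions (the section need not be closed if the stack is only quasi-separated), whereas you phrase $s_{t'}$ as a regular closed immersion, a harmless strengthening under mild separatedness hypotheses.
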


\begin{proof}
We construct an inverse map. Fix 
$$(\xi_t)_{\{t:T\to \cY\}}\in \varprojlim_{t:T\to \cY} H_{d+d(t)}(T\times_\cY\cX).$$
For any smooth $t':T'\to \cX$ with $T'\in \mathbf{QSch}_\bk$, the composition $t=f\circ t':T=T'\to \cY$ is also smooth and hence we have an lci morphism $(\id,t'):T'\to T\times_\cY\cX$ over $\cX$. Thus we obtain $\xi_{t'}\in H_{d+d(t')}(T')$ by pulling back $\xi_t$. It is easy to see that 
$$(\xi_{t'})_{\{t':T'\to \cX\}}\in \varprojlim_{t':T'\to \cX}H_{d+d(t')}(T')$$
and the assignment $(\xi_t)\mapsto (\xi_{t'})$ is the inverse map to the natural map in the lemma. 
\end{proof}
\begin{proof}[Proof of (EH) and (PB)]
For the extended homotopy (EH), note that for any smooth $T\to \cX$, $T\times_\cX V\to T$ is a torsor over a vector bundle of rank $r$ over $T$.
Since the extended homotopy $H_*(T)\cong H_{*+r}(T\times_\cX V)$ holds for schemes by assumption, we have an isomorphism
$$\cH_d(V)\cong \varprojlim_{t:T\to \cX} H_{d+d(t)}(T\times_\cX V)\mapleft{p^*}  \varprojlim_{t:T\to \cX} H_{d+d(t)-r}(T)\cong \cH_{d-r}(\cX).$$

The proof of the projective bundle formula (PB) is similar. 
\end{proof}

The limit theory $\cH_*$ in Definition \ref{2.5} arising from an intersection theory $H_*$ for $\mathbf{QSch}_\bk$ has the following additional structures.

\medskip
\noindent \underline{Intersection ring}: 
When $\cX$ is a smooth algebraic stack, there is a commutative ring structure on $\cH_*(\cX)$ such that for any smooth morphism $f:\cY\to \cX$, the smooth pullback $f^* : \cH_*(\cX) \to \cH_{*+e}(\cY)$ is a ring homomorphism.
Indeed,
for each smooth $t:T\to \cX$ with $T$ a quasi-projective scheme, we have the intersection ring $H_*(T)$ because $T$ is a smooth scheme. Since the lci pullbacks for schemes preserve the intersection ring structure, we obtain the desired ring structure on $H_*(\cX)$.

\medskip
\noindent \underline{Formal group law}: 
If $F_H(u,v) \in  H_*(\spec \bk)[[u,v]]$ is the formal group law in \eqref{2.74} for $\mathbf{QSch}_\bk$, the same formula 
\beq \label{2.21}
c_1(L_1\otimes L_2) =F_H(c_1(L_1),c_1(L_2))
\eeq
holds for any line bundles $L_1$ and $L_2$ over an algebraic stack $\cX$. For stacks, Chern classes may not be nilpotent but the equation \eqref{2.21} still makes sense since $c_1(t^*L_1) : H_*(T) \to H_{*-1}(T)$ is nilpotent for each $t : T \to \cX$.

\medskip
\noindent \underline{Chern classes}: 
For a vector bundle $E$ over an algebraic stack $\cX$, the $i$-th Chern class
$$c_i(E) : \cH_*(\cX) \lra \cH_{*-i}(\cX)$$
can be defined by the limit as in \eqref{2.9}. Also, if $p : \PP E \to \cX$ is the associated projective bundle, then we have {$\sum_{i=0}^r (-1)^i c_i(p^*E\dual)\cdot c_1(\sO_{\PP E}(1))^{r-i}=0.$}

\medskip
\noindent \underline{Gysin map for a vector bundle stack}: 
For a vector bundle stack $\fE$ over an algebraic stack $\cX$, we define the \emph{Gysin map}
\beq\label{2.14} 0^!_{\cE}=0_{\cE}^* : \cH_*(\cE) \lra \cH_{*+r}(\cX)\eeq
as follows. 
First, assume that $\fE$ is globally presented, i.e. $\fE=[E_1/E_0]$ for some homomorphism $E_0 \to E_1$ of locally free sheaves. Then
\eqref{2.14} is defined by the smooth pullbacks 
$$\cH_{*}(\cE)\lra \cH_{*+r_0}(E_1) \longleftarrow \cH_{*+r_0-r_1} (\cX) $$ 
which are isomorphisms by the extended homotopy property since $E_1\to\cE$ is a $E_0$-torsor and $E_1 \to \cX$ is a $E_1$-torsor. Here $r_0$ and $r_1$ are the ranks of $E_0$ and $E_1$ respectively, and $r=r_0-r_1$ is the rank of the complex $[E_0\to E_1]$. 
In general, for any smooth morphism $t : T \to \cX$ from a quasi-projective scheme $T$, the vector bundle stack $t^*\fE$ is globally presented by the resolution property of the quasi-projective scheme $T$. The Gysin maps $0_{t^*\fE}^* : \cH_*(t^*\fE) \to H_{*+r}(T)$ for all $t:T\to \cX$ give us the desired map $0^*_\cE$.
%$$0^*_\cE:\cH_d(\cE)=\varprojlim_{t':T'\to \cE}\cH_{d+d(t')}(T')\lra \varprojlim_{t:T\to \cX}\cH_{d+d(t)}(t^*\cE)\lra \varprojlim_{t:T\to \cX}H_{d+d(t)+r}(T)=\cH_{d+r}(\cX).$$

\medskip

\subsection{Good system of approximations}\label{S4.2}

In order to make the limit theory $\cH_*$ a (weak) intersection theory for stacks, we further need the refined Gysin pullbacks and exterior products. To make sense of these maps, we confine ourselves to a smaller category of algebraic stacks.

\begin{defi}[Good system of approximations] \label{3.1}
Let $H_*$ be an intersection theory for $\mathbf{QSch}_\bk$. A \emph{good system of approximations for an algebraic stack $\cX$ with respect to $H_*$} consists of morphisms
\beq\label{3.2} \{x_i:X_i\to \cX\}_{i\ge 0},\quad \{x_{i,i+1}:X_i\to X_{i+1}\}_{i\ge 0}\eeq
such that 
\begin{enumerate}
\item $x_{i+1}\circ x_{i,i+1}$ and $x_i$ are 2-isomorphic;
\item $x_i$ is smooth of relative dimension $d(x_i)$ and $X_i\in \mathbf{QSch}_\bk$;
\item for any quasi-projective scheme $S$ and a quasi-projective morphism $S\to \cX$, the natural map
$$H_d(S) \lra \varprojlim_{i}H_{d+d(x_i)}(S\times_\cX X_i)$$
is an isomorphism for all d;
\item for any quasi-projective morphism $\cY\to \cX$ of algebraic stacks, the lci pullback
$$H_{*+d(x_{i+1})}(\cY\times_\cX X_{i+1})\lra H_{*+d(x_i)}(\cY\times_\cX X_i)$$
induced by $x_{i,i+1}$ is surjective. 
\end{enumerate}
If (1)-(4) hold for any intersection theory $H_*$ for $\mathbf{QSch}_\bk$, we say \eqref{3.2} is a \emph{good system of approximations}.

We let $\mathbf{St}_\bk^{ga}\subset \mathbf{St}_\bk$ be the full 2-subcategory of $\cX\in  \mathbf{St}_\bk$ such that
\begin{enumerate}
\item the diagonal $\Delta_{\cX} : \cX \to \cX \times \cX$ is quasi-projective and 
\item $\cX$ has a good system of approximations.  
\end{enumerate}
\end{defi}

For our discussion below, the following simple lemma will be useful.
\begin{lemm}\label{2.78}
For a quasi-projective morphism $f : \cX \to \cY$ of algebraic stacks in $\mathbf{St}_\bk$, if $\{Y_i \to \cY\}$ is a good system of approximations, so is $\{X_i=\cX \times_{\cY}Y_i \to \cX\}$.

For algebraic stacks $\cX$ and $\cY$ in $\mathbf{St}_\bk^{ga}$, if $\{X_i\to\cX\}$ and $\{Y_i\to \cY\}$ are good systems of approximations, then so is $\{X_i\times Y_i \to \cX \times\cY\}$. 
%If $\{X_i\to \cX\}$ amd $\{X_i'\to \cX\}$ are good systems of approximations, then so is $\{X_i\times_\cX X'_i\to \cX\}$.
\end{lemm}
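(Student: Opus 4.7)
The plan is to verify conditions (1)--(4) of Definition \ref{3.1} in each case. For the first claim, (1) and (2) are immediate from the pullback construction: $X_i = \cX \times_\cY Y_i$ is smooth over $\cX$ of the same relative dimension as $Y_i \to \cY$, the identities $x_{i+1} \circ x_{i,i+1} \simeq x_i$ descend from those for $\{Y_i\}$, and $X_i$ is a quasi-projective scheme because $f$ is quasi-projective and $Y_i$ is. For (3) and (4), the key identity is $W \times_\cX X_i = W \times_\cY Y_i$ for any $W$ over $\cX$, and composing a quasi-projective morphism $W \to \cX$ with the quasi-projective $f$ yields a quasi-projective morphism $W \to \cY$; so both conditions transfer directly from the good system $\{Y_i \to \cY\}$.

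For the second claim, (1) and (2) are similarly routine. For (3), fix a quasi-projective $s : S \to \cX \times \cY$ from a quasi-projective scheme $S$. Since $\cX, \cY \in \mathbf{St}_\bk^{ga}$ have quasi-projective diagonals, any morphism from a quasi-projective scheme into $\cX$ or $\cY$ is automatically quasi-projective (by base changing $\Delta_\cX$ or $\Delta_\cY$). Let $T_i = S \times_\cX X_i$; this is a quasi-projective scheme, and its induced morphism to $\cY$ is quasi-projective. Since $S \times_{\cX \times \cY}(X_i \times Y_j) = T_i \times_\cY Y_j$, condition (3) for $\{Y_j \to \cY\}$ gives $\varprojlim_j H_{d+d(y_j)}(T_i \times_\cY Y_j) \cong H_d(T_i)$ for each $i$, and condition (3) for $\{X_i \to \cX\}$ applied to $S \to \cX$ gives $\varprojlim_i H_{d+d(x_i)}(T_i) \cong H_d(S)$. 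Cofinality of the diagonal $\{(i,i)\}$ inside the product poset $\{(i,j)\}$ then collapses the resulting double limit into the desired single limit along $\{X_i \times Y_i\}$.

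For (4), given a quasi-projective $\cZ \to \cX \times \cY$, I would factor the transition $\cZ \times_{\cX \times \cY}(X_{i+1} \times Y_{i+1}) \to \cZ \times_{\cX \times \cY}(X_i \times Y_i)$ through $\cZ \times_{\cX \times \cY}(X_{i+1} \times Y_i)$. The first leg is the $\cY$-system transition over the stack $\cZ \times_\cX X_{i+1}$, whose morphism to $\cY$ is the quasi-projective composition $\cZ \times_{\cX \times \cY}(X_{i+1} \times \cY) \to X_{i+1} \times \cY \to \cY$; the second leg is the $\cX$-system transition over $\cZ \times_\cY Y_i$, quasi-projective over $\cX$ by the symmetric argument. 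Condition (4) for each good system makes each leg surjective, so the composition is surjective too. The main obstacle throughout is tracking quasi-projectivity across successive base changes, and this is precisely where the hypothesis $\cX, \cY \in \mathbf{St}_\bk^{ga}$---via their quasi-projective diagonals---does the essential work.
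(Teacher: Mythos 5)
Your proof is correct, and it fills in an argument the paper explicitly declines to give (the authors only remark that the proof ``is rather elementary and we omit it''), so there is nothing in the text to compare against. The one place that genuinely requires care---and which you handle correctly---is tracking quasi-projectivity through repeated base changes in the second part of the lemma, where the hypothesis that $\cX,\cY\in\mathbf{St}_\bk^{ga}$ (hence have quasi-projective diagonals) is exactly what lets you promote morphisms from quasi-projective schemes into $\cX$ or $\cY$ to quasi-projective morphisms, making conditions (3) and (4) for the individual systems applicable; the cofinality-of-the-diagonal collapse of the double inverse limit and the two-step factorization of the transition map through $\cZ\times_{\cX\times\cY}(X_{i+1}\times Y_i)$ are both sound.
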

The proof of this lemma is rather elementary and we omit it. 

\begin{prop}\label{3.3}
The 2-category $\mathbf{St}_\bk^{ga}$ is an admissible subcategory of $\mathbf{St}_\bk$.
\end{prop}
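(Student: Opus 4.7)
The plan is to verify the four conditions of Definition \ref{2.71} for $\mathbf{St}_\bk^{ga}$. For each condition, two things must be checked: that the diagonal of the resulting stack is quasi-projective, and that a good system of approximations can be produced. The second will follow from Lemma \ref{2.78}, and the first from standard manipulations of diagonals under base change.

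Conditions (1) and (2) are immediate. The constant systems $X_i=\emptyset$ and $X_i=\spec\bk$ serve as good systems for $\emptyset$ and $\spec\bk$, and their diagonals are identities. For a disjoint union $\cX\sqcup\cY$, the componentwise system $\{X_i\sqcup Y_i\}$ is a good system, and the diagonal factors through the open-and-closed substack $\cX^2\sqcup\cY^2\subset(\cX\sqcup\cY)^2$ as $\Delta_\cX\sqcup\Delta_\cY$, which is quasi-projective.

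For the fiber product in condition (3), given $\cX,\cY,\cZ\in\mathbf{St}_\bk^{ga}$ with maps to $\cZ$, set $\cW=\cX\times_\cZ\cY$. Since $\Delta_\cZ$ is quasi-projective, so is its base change $\cW\to\cX\times\cY$. The plan is to apply Lemma \ref{2.78} twice: first, the second part produces a good system $\{X_i\times Y_i\}$ for $\cX\times\cY$; second, the first part, applied to the quasi-projective morphism $\cW\to\cX\times\cY$, yields the good system $\{X_i\times_\cZ Y_i\}$ for $\cW$ (each member is a quasi-projective scheme because it admits a quasi-projective morphism to $X_i\times Y_i$). To see that $\Delta_\cW$ is quasi-projective, I would use the factorization
$$\cW\;\lra\;\cW\times_{\cX\times\cY}\cW\;\lra\;\cW\times\cW.$$
The first map is the relative diagonal of the separated morphism $\cW\to\cX\times\cY$, hence a closed immersion; the second is a base change of $\Delta_{\cX\times\cY}$, which is, up to a permutation of factors, $\Delta_\cX\times\Delta_\cY$ and therefore quasi-projective.

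Condition (4) is analogous but simpler: if $f:\cX\to\cY$ is quasi-projective with $\cY$ admitting a good system $\{Y_i\}$, then Lemma \ref{2.78} provides the good system $\{\cX\times_\cY Y_i\}$ for $\cX$, each member being a quasi-projective scheme since it maps quasi-projectively to $Y_i$. The diagonal factors as $\cX\to\cX\times_\cY\cX\to\cX\times\cX$, where the first arrow is a closed immersion (being the diagonal of the separated morphism $f$) and the second is a base change of $\Delta_\cY$. The proof is essentially bookkeeping; the only delicate point is tracking which diagonals and which base changes remain quasi-projective, and all substantive content is packed into Lemma \ref{2.78} and the stability of quasi-projectivity under composition and base change.
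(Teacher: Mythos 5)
Your proof is correct and takes essentially the same route as the paper: both verify conditions (3) and (4) by combining Lemma \ref{2.78} with the observation that $\cX\times_\cZ\cY \to \cX\times\cY$ is a base change of $\Delta_\cZ$, hence quasi-projective, and both verify quasi-projectivity of diagonals via the standard factorization $\cX\to\cX\times_\cY\cX\to\cX\times\cX$. Your write-up is a bit more explicit than the paper's (the paper omits (1)--(2) and compresses the diagonal argument for the fiber product into ``is similar''), but the substance is identical.
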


\begin{proof}
By Lemma \ref{2.78}, $\cX$ admits a good system of approximations if {$\cY \in \mathbf{St}_\bk^{ga}$} and the morphism $\cX\to\cY$ is quasi-projective. 
For morphisms $\cX\to \cZ$, $\cY\to \cZ$ of algebraic stacks in $\mathbf{St}_\bk^{ga}$, 
%if $\{X_i\to\cX\}$ and $\{Y_i\to \cY\}$ are good systems of approximations, then $\{X_i\times Y_i \to \cX \times\cY\}$ is a good system of approximations for $\cX\times \cY$. 
from the fiber diagram
$$\xymatrix{
\cX\times_\cZ\cY\ar[r] \ar[d] & \cX\times \cY\ar[d]\\
\cZ\ar[r]^{\Delta_\cZ} & \cZ\times \cZ,
}$$
we find that the top horizontal arrow is quasi-projective and hence $\cX\times_\cZ\cY$ also admits a good system of approximations by Lemma \ref{2.78}.

If $\cX\to \cY$ is quasi-projective and the diagonal $\Delta_{\cY}$ of $\cY$ is quasi-projective, then so is the diagonal $\Delta_{\cX}$ of $\cX$. Indeed, $\cX\times_\cY\cX\to \cX\times \cX$ is quasi-projective since $\Delta_{\cY}$ is quasi-projective and the diagonal $\Delta_{\cX/\cY} : \cX\to \cX\times_\cY \cX$ is a closed immersion. The quasi-projectivity of the diagonal of $\cX\times_\cZ\cY$ is similar. 
\end{proof}

\begin{exam}[Totaro's approximation of classifying spaces] \label{3.4} 
Let $G$ be a linear algebraic group over $\bk$. By \cite[Remark 1.4]{Tot}, there is a sequence of $G$-representations $V_i$ and $G$-invariant open $U_i\subset V_i$ such that
\begin{enumerate}
\item stack quotient $[U_i/G]$ is a quasi-projective scheme;
\item $V_{i+1}=V_i\oplus W_i$ for some $G$-representation $W_i$;
\item $U_i\times W_i\subset U_{i+1}$;
\item $\mathrm{codim}_{V_i}(V_i- U_i)<\mathrm{codim}_{V_{i+1}}(V_{i+1}- U_{i+1})$.
\end{enumerate}
The quotients $\{U_i/G\to BG\}$ is a good system of approximations for the classifying stack $BG=[\mathrm{pt}/G]$  by the proof of \cite[Proposition 15]{HeMa}. Moreover the diagonal of $BG$ is affine.

Let $X$ be a quasi-projective scheme with a linear action of a linear algebraic group $G$. Then the quotient stack $\cX=[X/G]$ is quasi-projective over $BG$ and $\cX$ admits a good system of approximations 
$$X\times_G U_i\lra \cX$$
with $U_i$ above. It is easy to see that the diagonal of $\cX=[X/G]$ is quasi-projective. 
Therefore the quotient stack $\cX$ of a quasi-projective scheme by a linear algebraic group lies in $\mathbf{St}_\bk^{ga}$.
\end{exam}

When there is a good system $\{x_i:X_i\to \cX\}$ of approximations, the limit theory $\cH(\cX)$ is the inverse limit of $H_*(X_i)$ only. 
\begin{prop}\label{3.6}
Let $\cX\in \mathbf{St}_\bk^{ga}$ be an algebraic stack equipped with a good system $\{x_i:X_i\to \cX\}$ of approximations. Then the smooth pullbacks $x_i^*:\cH_d(\cX)\to \cH_{d+d(x_i)}(X_i)=H_{d+d(x_i)}(X_i)$ induce an isomorphism
\beq \label{3.7} 
\cH_d(\cX)\mapright{\cong} \varprojlim_i H_{d+d(x_i)}(X_i)
\eeq
for all $d$. In particular, the right hand side of \eqref{3.7} is independent of the choice of good approximations. 
\end{prop}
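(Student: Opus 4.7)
The plan is to construct an explicit inverse to the restriction map $\cH_d(\cX)\to \varprojlim_i H_{d+d(x_i)}(X_i)$ sending $(\xi_t)_t\mapsto (\xi_{x_i})_i$, using condition (3) of Definition \ref{3.1} to upgrade compatible data on the system $\{X_i\}$ to compatible data on all smooth $t:T\to\cX$ from quasi-projective schemes. Throughout, note that because $\Delta_\cX$ is quasi-projective, every morphism $t:T\to\cX$ from a quasi-projective scheme $T$ is itself quasi-projective, so condition (3) applies to $t$.

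Given $(\eta_i)\in\varprojlim_i H_{d+d(x_i)}(X_i)$, and any smooth $t:T\to\cX$ with $T\in\mathbf{QSch}_\bk$, I would form the fibered squares
\[\xymatrix{
T_i\ar[r]^{q_i}\ar[d]_{p_i} & X_i\ar[d]^{x_i}\\
T\ar[r]^t & \cX
}\]
so that $T_i\in\mathbf{QSch}_\bk$, $p_i$ is smooth of relative dimension $d(x_i)$ and $q_i$ is smooth of relative dimension $d(t)$. Set $\eta_{T_i}:=q_i^*\eta_i\in H_{d+d(t)+d(x_i)}(T_i)$. To see that $(\eta_{T_i})_i$ is compatible under the lci transition maps $T_i\to T_{i+1}$ (which are lci because $x_{i,i+1}$ factors as a base change of the regular immersion $\Delta_{X_{i+1}/\cX}$ followed by a smooth projection), I would use the commutative square with $q_{i+1}\circ(T_i\to T_{i+1})=x_{i,i+1}\circ q_i$ together with compatibility of smooth and lci pullbacks (item (5)(b) of Definition \ref{2.1}) and the given relation $x_{i,i+1}^*\eta_{i+1}=\eta_i$. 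Since $t$ is quasi-projective, condition (3) applied to $t$ then yields a unique $\xi_t\in H_{d+d(t)}(T)$ with $p_i^*\xi_t=\eta_{T_i}$ for all $i$.

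Next I would verify that the assignment $t\mapsto\xi_t$ gives an element of the full limit $\cH_d(\cX)$. For any commutative triangle of smooth morphisms from quasi-projective schemes
\[\xymatrix{ T'\ar[rr]^f\ar[dr]_{t'} & & T\ar[dl]^t \\ & \cX & }\]
the induced map $T'_i\to T_i$ on fiber products is the base change of the lci $f$ by the smooth $p_i$, hence is lci, and by items (5)(b) and (7) of Definition \ref{2.1} lci and smooth pullbacks commute. Pulling $f^*\xi_t$ back by the smooth $p'_i:T'_i\to T'$ gives $(T'_i\to T_i)^*p_i^*\xi_t=(T'_i\to T_i)^*\eta_{T_i}=q'^*_i\eta_i=\eta_{T'_i}=p'^*_i\xi_{t'}$, since $q_i\circ(T'_i\to T_i)=q'_i$. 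Condition (3) applied to $t'$ then gives $f^*\xi_t=\xi_{t'}$, so $(\xi_t)_t\in\cH_d(\cX)$. By construction, pulling back by $x_j:X_j\to\cX$ (taking $T=X_j$, $t=x_j$, and using that $(p_i)^*\xi_{x_j}$ agrees with $q_i^*\eta_i$ and $X_j\times_\cX X_i\to X_j$ is a smooth cover to which (3) applies) recovers $\eta_j$, so this gives a one-sided inverse.

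For injectivity of the restriction map, if $\xi_{x_i}=0$ for all $i$, then for any smooth $t:T\to\cX$ the compatibility of $(\xi_s)$ under the smooth $q_i:T_i\to X_i$ and $p_i:T_i\to T$ gives $p_i^*\xi_t=q_i^*\xi_{x_i}=0$; by condition (3), $\xi_t=0$. The main obstacle, and the place where the hypotheses of Definition \ref{3.1} do the real work, is the compatibility check of $(\xi_t)_t$ across all lci changes of $T$; this reduces via (3) to the identity $q_i\circ(T'_i\to T_i)=q'_i$ and the commutativity of lci with smooth pullbacks, which is why the formulation of (3) allowing arbitrary quasi-projective $S\to\cX$ (rather than just $S=T$ with $t$ itself) is essential.
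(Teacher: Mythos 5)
Your proof is correct and takes essentially the same approach as the paper's: both construct an explicit inverse by pulling back the compatible family $(\eta_i)$ along $T\times_\cX X_i\to X_i$ and then descending via the isomorphism of Definition~\ref{3.1}(3) applied to the quasi-projective morphism $t:T\to\cX$. You simply spell out the compatibility verifications (and the observation that $\Delta_\cX$ quasi-projective makes every such $t$ quasi-projective) that the paper's proof labels ``straightforward to check.''
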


\begin{proof} 
Suppose $\xi=\{\xi_i\}_{i\ge 0}\in \varprojlim_{i} H_{d+d(x_i)}(X_i)$. For any smooth morphism $t:T\to \cX$ with $T$ quasi-projective, we have an isomorphism
$$H_{d+d(t)}(T)\cong \varprojlim_{i} H_{d+d(x_i)}(T\times_\cX X_i).$$
The smooth pullbacks by $T\times_\cX X_i\to X_i$ induce elements $\eta_t\in H_{*+d(t)}(T)$ by the isomorphism.
It is straightforward to check that 
$$\eta=\{\eta_t\}\in \varprojlim_{t}H_{d+d(t)}(T)=\cH_d(\cX)$$
and the map $\xi\mapsto \eta$ is the inverse of \eqref{3.7}. 
\end{proof}

We obtain the following from Example \ref{3.4} and Proposition \ref{3.6}.

\begin{coro}[Equivariant intersection theories]\label{3.8}
Let $X$ be a quasi-projective scheme equipped with a linear action of a linear algebraic group $G$. Let $\cX=[X/G]$ be the quotient stack. Then 
$$\cH_d(\cX)=\varprojlim_i H_{d+d(x_i)}(X\times_G U_i)$$
for all $d$ where $U_i$ and $x_i$ are from Example \ref{3.4}. In particular, the right hand side is independent of the presentation $\cX=[X/G]$ as a global quotient.
\end{coro}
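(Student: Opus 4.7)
The plan is to deduce the corollary directly from Example \ref{3.4} and Proposition \ref{3.6}, with essentially no new content. The bulk of the work has already been packaged into those two statements, and the role of the corollary is just to identify the approximation system coming from Totaro's construction.

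First I would invoke Example \ref{3.4} to produce the approximating maps: the representations $V_i$ and invariant opens $U_i\subset V_i$ furnish a good system of approximations $\{U_i/G\to BG\}$ for the classifying stack. Since $X$ is quasi-projective with a linear $G$-action, the quotient $\cX=[X/G]$ is quasi-projective over $BG$, so by Lemma \ref{2.78} (good systems pull back along quasi-projective morphisms) the fiber products
\[ x_i : X_i := X\times_G U_i = \cX\times_{BG} (U_i/G) \lra \cX \]
form a good system of approximations for $\cX$, with $d(x_i)$ equal to the relative dimension of $U_i/G\to BG$. At this point the hypotheses of Proposition \ref{3.6} are met.

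Next I would apply Proposition \ref{3.6} to this good system, which yields the isomorphism
\[ \cH_d(\cX) \xrightarrow{\ \cong\ } \varprojlim_i H_{d+d(x_i)}(X\times_G U_i), \]
proving the first assertion. The independence statement is then automatic: Proposition \ref{3.6} asserts that the inverse limit on the right hand side of \eqref{3.7} is independent of the choice of good approximations, because it is canonically isomorphic to $\cH_d(\cX)$, which is defined intrinsically from $\cX$ with no reference to any presentation. In particular, if $\cX$ can be written as $[X/G]=[X'/G']$ in two different ways, both produce good approximation systems and hence compute the same group $\cH_d(\cX)$.

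There is no real obstacle here: the only mild point worth emphasizing is that Example \ref{3.4} ensures $\cX\in\mathbf{St}_\bk^{ga}$ (so that the diagonal is quasi-projective and Proposition \ref{3.6} is applicable), and that Lemma \ref{2.78} is what permits passage from the good system for $BG$ to the good system for $\cX$. Everything else is a direct quotation of Proposition \ref{3.6}.
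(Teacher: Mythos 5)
Your proof is correct and takes essentially the same approach as the paper: Example \ref{3.4} already records that $\{X\times_G U_i\to\cX\}$ is a good system of approximations (implicitly via Lemma \ref{2.78} applied to the quasi-projective morphism $\cX\to BG$) and that $\cX\in\mathbf{St}_\bk^{ga}$, after which Proposition \ref{3.6} gives both the isomorphism and the presentation-independence. The paper's proof is exactly this one-line deduction.
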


\begin{exam}[Equivariant Chow and algebraic cobordism theories]\label{2.77} 
When the algebraic stack $\cX$ admits a presentation as the global quotient $[X/G]$ of a quasi-projective scheme $X$ by a linear algebraic group $G$, 
Corollary \ref{3.8} tells us that the limit theory $\cH_*(\cX)$ coincides with the equivariant Chow theory for $\cX=[X/G]$ of Edidin-Graham in \cite{EdGr} (resp. the equivariant algebraic cobordism of Krishna and Heller-Malag\'{o}n-L\'{o}pez in \cite{HeMa, Kri}) when $H_*$ is {Chow theory} in \cite{Ful} (resp. algebraic cobordism of Levine-Morel in \cite{LeMo}). \end{exam}
Thus the limit theory $\cH_*$ of quotient stacks generalizes all known equivariant intersection theories. 
It also proves that the equivariant theories are independent of the presentation of $\cX$ as a quotient stack $[X/G]$ for any $\cH_*$. 

\medskip

For virtual intersection theories below, we will use cone stacks.  
\begin{exam}[Cone stack]\label{3.15}
Let $\cX\in\mathbf{St}_\bk^{ga}$ and $\fC=[C/E]$ be a globally presented cone stack for some vector bundle $E$ and a $E$-cone $C$ (cf.  \cite{BeFa}). 
As $C$ is quasi-projective over $\cX$, $C $ admits a good system $\{C_i \to C\}$ of approximations by Lemma \ref{2.78}. 
Since $C$ is an $E$-torsor over $\fC$, the good system $\{C_i \to C\}$ for $C$ induces a good system $\{C_i \to C \to \fC\}$ of approximations for $\fC$ by the extended homotopy (EH).  Therefore $\fC \in \mathbf{St}_\bk^{ga}$. 
\end{exam}

\subsection{An example}\label{3.9}
Is there an algebraic stack admitting a good system of approximations which is not a global quotient stack?

%\begin{exam}\label{3.9}
Let $\cX$ be a smooth algebraic stack with open substacks 
$\{\cX_i\}_{i\ge 0}$ satisfying 
\begin{enumerate}
\item $\cX_i\subset \cX_{i+1}$ for $i\ge 0$ and $\cX=\cup_i \cX_i$;
\item each $\cX_i$ admits a good system of approximations $\{X_{i,j}\}_{j\ge 0}$;
\item there are morphisms $X_{i,j}\to X_{i+1,j}$ which factor as 
$$X_{i,j}\mapright{0_{E_{i,j}}} E_{i,j}\mapright{\text{open}} X_{i+1,j}$$ 
for some vector bundles $E_{i,j}$ over $X_{i,j}$ and
fit into commutative
\beq\label{e4}\xymatrix{
X_{i,j}\ar[r]\ar[d] &X_{i+1,j}\ar[d]\\
X_{i,j+1}\ar[r] & X_{i+1,j+1}
}\eeq
where the vertical arrows are the structural morphisms of good approximations.
\end{enumerate}
Then it is immediate to see that $\cX$ admits a good system of approximations 
$$\{X_{i,i} \lra \cX_i\subset \cX\}_{i\ge 0}.$$

For example, let $\cM=\cM_C(r,d)$ denote the moduli stack of vector bundles of rank $r$ and degree $d$ over a smooth projective curve $C$ of genus $g$ over $\CC$. For each $m$, fix an isomorphism $\CC^{m+1}\cong \CC^{m}\times \CC$.

Fix a very ample line bundle $\sO_C(1)$ on $C$ and let $a$ be the degree of $\sO_C(1)$.
Let $\ell=\chi(\sO_C(1))=a-g+1$ so that we have a surjection $\sO_C^{\oplus \ell}\to \sO_C(1)$. 
Let $P_0=d-r(g-1)=\chi(E)$ be the Riemann-Roch number of $E\in \cM$  and let $P_n=P_0+ran=\chi(E(n))$. 
Consider the quot scheme $Quot^P(\sO_C^{\oplus P_n}(-n))$ of surjective homomorphisms
$\sO_C^{\oplus P_n}(-n)\to E$ with $E$ coherent of $\chi(E(k))=P_k$ for all $k$. 
Let $Y_n$ be the open subscheme of $Quot^P(\sO_C^{\oplus P_n}(-n))$ parameterizing quotients
$q:\sO_C^{\oplus P_n}(-n)\to E$ such that 
$H^1(C,E(n-1))=0$ and $H^0(q(n)):H^0(\sO_C^{\oplus P_n})\to H^0(E(n))$ is an isomorphism.
In particular, $E$ is $n$-regular so that the natural map
\beq\label{e1} H^0(C,E(n))\otimes H^0(C,\sO_C(1))\lra H^0(C, E(n+1))\eeq
is surjective. Let $G_n=GL(P_n)$. Then it is easy to see that
$\cM_n:=[Y_n/G_n]$ is open in $\cM$ and $\cup_n \cM_n=\cM$ (cf. \cite{New}).

For $m\ge P_n$, let $\mathrm{Hom}(\CC^m, \CC^{P_n})^\circ$ denote the set of surjective homomorphisms from $\CC^m$ to $\CC^{P_n}$. 
By Example \ref{3.4},  the morphisms 
$$Y_{n,m}=Y_n\times_{G_n} \mathrm{Hom}(\CC^m, \CC^{P_n})^\circ\lra [Y_n/G_n]=\cM_n$$
form a good system of approximations for $\cM_n$ and $Y_{n,m}$ parameterizes surjective homomorphisms
$\sO_C^{\oplus m}(-n)\to E$ that factor as 
\beq\label{e2} \sO_C^{\oplus m}(-n)\twoheadrightarrow \sO_C(-n)\otimes H^0(E(n))=\sO_C^{\oplus P_n}(-n)\twoheadrightarrow E.\eeq
By \eqref{e1}, we have a morphism $Y_{n,m}\to Y_{n+1,m\ell}$ by sending \eqref{e2} to
\beq\label{e3}\sO_C^{\oplus m\ell}(-n-1)=\sO_C^{\oplus m}(-n)\otimes \sO_C^{\oplus \ell}(-1)\twoheadrightarrow\eeq 
$$\sO_C^{\oplus P_n}(-n)\otimes \sO_C^{\oplus \ell}(-1) \twoheadrightarrow 
\sO_C^{\oplus P_{n+1}}(-n-1)\twoheadrightarrow E.$$

Fix a number $\nu\ge P_0$. 
Let $X_{i,j}=Y_{i,\nu\ell^{i+j}}$ so that $\{X_{i,j}\to \cM_i\}_j$ is a good system of approximations and  
\eqref{e3} gives us morphisms $X_{i,j}\to X_{i+1,j}$ that fit into the commutative diagram \eqref{e4}. 
There is a vector bundle $E_{i,j}\to X_{i,j}$ whose fiber over \eqref{e2} is $\Hom(\CC^{\nu\ell^{i+j+1}}, \CC^{\ell P_i-P_{i+1}})$ such that $E_{i,j}$ is open in $X_{i+1,j}$.
We thus find that
$\cM\in \mathbf{St}_\bk^{ga}$.  
%\end{exam}

%The following is an important example in enumerative geometry for the degeneration formula \cite{JLi1, JLi2, LiWu}.
%
%
%\begin{exam}\label{3.9}
%Let $\cX$ be an algebraic stack locally of finite type, which is the direct limit 
%$$\cX=\lim_{\to } \cX_n$$
%of open subststacks $\{\cX_n\}$,  such that
%\begin{enumerate}
%\item each $\cX_n$ admits a good system  of approximations
%$$\{X_{n,m}\mapright{p} \cX_n, X_{n,m}\mapright{\varphi} X_{n,m+1}\}_m $$
%\item for arrows $n\to n+1$, there are lci morphisms 
%$\nu:X_{n,m}\lra X_{n+1,m}$ that fit into the commutative diagrams
%$$\xymatrix{
%X_{n,m}\ar[d]_p\ar[r]^\nu & X_{n+1,m}\ar[d]^p\\
%\cX_n \ar[r] & \cX_{n+1}
%} \quad
%\xymatrix{
%X_{n,m}\ar[d]_\varphi\ar[r]^\nu & X_{n+1,m}\ar[d]^\varphi\\
%X_{n,m+1} \ar[r]^\nu & X_{n+1,m+1}
%}$$
%\item $\nu^!$ is surjective for each arrow $n\to n+1$ and $m$. 
%\end{enumerate}
%Then it is immediate to see that $\cX$ admits a good system of approximations 
%$$\{X_{n,n} \lra \cX_n\hookrightarrow \cX, X_{n,n}\mapright{\varphi} X_{n,n+1}\mapright{\nu_0}  X_{n+1,n+1}\}$$ 
%after fixing an arrow $n\to n+1$ for each $n$. 
%
%For instance, the stack $\fA$ for expanded degenerations in \cite[\S2]{LiWu}
%is defined as the direct limit of open substacks 
%$$\fA_n=[\bbA^{n+1}/\mathbb{G}_m^n].$$
%By applying the arugment above, it is straightforward that $\fA$ admits a good system of approximations while $\fA$ is not a quotient stack. \red{Check this example carefully. See if it makes sense.}
%\end{exam}

\bigskip

\subsection{Weak intersection theories for $\mathbf{St}_\bk^{ga}$}\label{S4.3}

The purpose of this subsection is to prove the following.

\begin{theo} \label{2.10}
Let $H_*$ be an intersection theory for $\mathbf{QSch}_\bk$.
Then the limit theory $\cH_*$ defined by \eqref{2.6} is a weak intersection theory for $\mathbf{St}_\bk^{ga}$ (cf. Definition \ref{2.70}). 
\end{theo}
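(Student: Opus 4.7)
The plan is to (a) construct the refined Gysin pullback $f^!$ and the exterior product for $\cH_*$ by working levelwise on good approximations, (b) derive the compatibility axioms (1)--(10) of Definition \ref{2.1} from the corresponding identities for $H_*$ on quasi-projective schemes, and (c) establish (WEx) from the ordinary excision sequence of $H_*$.

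For (a), given a Cartesian square in $\mathbf{St}_\bk^{ga}$
$$\xymatrix{
\cX'\ar[r]^{f'}\ar[d]_{g'} & \cY'\ar[d]^g\\
\cX\ar[r]^f & \cY
}$$
with $f$ a regular immersion of constant codimension $c$, I fix a good system $\{Y_i\to \cY\}$ of approximations for $\cY$ and set $X_i:=\cX\times_\cY Y_i$, $Y'_i:=\cY'\times_\cY Y_i$, $X'_i:=\cX'\times_\cY Y_i$. By Lemma \ref{2.78} these are good systems of approximations for $\cX$, $\cY'$ and $\cX'$ respectively. Since $Y_i\to \cY$ is smooth, the base change $f_i:X_i\hookrightarrow Y_i$ is again a regular immersion of codimension $c$, and $H_*$ supplies refined Gysin pullbacks $f_i^!:H_{*+d(y_i)}(Y'_i)\to H_{*-c+d(y_i)}(X'_i)$. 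The transition maps of the good system are lci morphisms between quasi-projective schemes (in the sense explained after Definition \ref{2.5}), and axioms (5)(b), (5)(c), (6) and (7) of Definition \ref{2.1} ensure that the $f_i^!$ intertwine with the induced lci transition pullbacks. Hence the family $\{f_i^!\}$ passes through the inverse limit, and by Proposition \ref{3.6} defines $f^!:\cH_*(\cY')\to \cH_{*-c}(\cX')$. Independence of the chosen good system is a standard cofinality argument: any two good systems $\{Y_i\}$, $\{Y'_j\}$ can be compared via the product good system $\{Y_i\times_\cY Y'_j\}$ supplied by Lemma \ref{2.78}. The exterior product is constructed the same way: Lemma \ref{2.78} gives the good system $\{X_i\times Y_i\to \cX\times\cY\}$, and the exterior products of $H_*$ on the approximations define $\times:\cH_*(\cX)\otimes\cH_*(\cY)\to \cH_*(\cX\times\cY)$.

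For (b), all the compatibility identities (1)--(10) reduce, via Proposition \ref{3.6} and Lemma \ref{2.78}, to the corresponding identities for $H_*$ on the quasi-projective approximating schemes, which hold by hypothesis. The axioms (EH), (PB), and (DS) have already been established for $\cH_*$ in Section \ref{S4.1} (via Lemma \ref{2.11} and the trivial observation that every smooth $t:T\to \cX$ factors through itself).

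For (c), let $\imath:\cZ\hookrightarrow \cX$ be a closed immersion with complement $\jmath:\cU=\cX-\cZ\to \cX$. Fix a good system $\{X_i\to \cX\}$ for $\cX$; by Lemma \ref{2.78}, its restrictions $\{Z_i:=\cZ\times_\cX X_i\}$ and $\{U_i:=X_i-Z_i\}$ are good systems for $\cZ$ and $\cU$. The excision sequence (Exc) for the schemes $(X_i, Z_i)$ yields short exact sequences $K_i\hookrightarrow H_{*+d(x_i)}(X_i)\twoheadrightarrow H_{*+d(x_i)}(U_i)$ where $K_i=\image(\imath_{i*})$. The transition maps of the good system are surjective on $H_*$ by condition (4) of Definition \ref{3.1}, and a direct diagram chase shows the induced transitions $K_{i+1}\twoheadrightarrow K_i$ are surjective too; the Mittag-Leffler condition therefore allows us to pass to the inverse limit and obtain the desired surjection $\jmath^*:\cH_*(\cX)\twoheadrightarrow \cH_*(\cU)$. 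When $\imath$ is regular of codimension $r$ and $N_{\cZ/\cX}$ is trivial, each $N_{Z_i/X_i}$ is trivial, so the self-intersection formula \eqref{6.51} gives $\imath_i^!\circ\imath_{i*}=c_r(N_{Z_i/X_i})=0$; by exactness of excision this factors $\imath_i^!$ uniquely as $\lambda_i\circ\jmath_i^*$, and the family $\{\lambda_i\}$ is compatible with the lci transition maps by the computation $f^*(\lambda_{i+1}(\jmath_{i+1}^*x))=f^*(\imath_{i+1}^!x)=\imath_i^!(f^*x)=\lambda_i(f^*(\jmath_{i+1}^*x))=\lambda_i(\jmath_i^*(f^*x))$ combined with surjectivity of $\jmath_i^*$. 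Taking the limit yields $\lambda_{\cZ/\cX}$, and uniqueness at the limit level follows from the already established surjectivity of $\jmath^*$.

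The main obstacle is step (c): surjectivity of $\jmath^*$ on the limit requires the Mittag-Leffler condition on the kernels, which is exactly where condition (4) of Definition \ref{3.1} enters in an essential way — this is the reason we cannot prove the full excision sequence (Exc) for $\cH_*$ and must settle for (WEx). A secondary technical point is verifying that the lci transition pullbacks of a good system intertwine the refined Gysin pullbacks $f_i^!$, which is where axioms (5)(b)--(d) and (7) of Definition \ref{2.1} get used.
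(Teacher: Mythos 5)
Your overall architecture — define $f^!$ and $\times$ levelwise on good approximations, invoke the axioms of $H_*$ on the approximating schemes, and deduce (WEx) from the ordinary excision of $H_*$ via Mittag–Leffler — is the same as the paper's, and your proof of weak excision (surjectivity of the kernel system $K_i$ from Definition \ref{3.1}(4), hence $\varprojlim^1 K_i = 0$, hence surjectivity of $\jmath^*$; and factorization of $\imath_i^!$ through $\jmath_i^*$ when the normal bundle is trivial) matches Proposition \ref{3.12} closely. However, your construction of the refined Gysin pullback has a genuine gap.

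In Definition \ref{2.1}(iv), the Cartesian square has $f$ a regular immersion but $g:\cY'\to\cY$ completely arbitrary. You take a good system $\{Y_i\to\cY\}$, set $Y'_i:=\cY'\times_\cY Y_i$, $X'_i:=\cX'\times_\cY Y_i$, and invoke Lemma \ref{2.78} to claim these form good systems for $\cY'$ and $\cX'$. But Lemma \ref{2.78} requires the morphism you pull back along to be \emph{quasi-projective}, and $g$ need not be. When $g$ is not quasi-projective, $Y'_i=\cY'\times_\cY Y_i$ is in general only a stack, not a quasi-projective scheme, so $H_*(Y'_i)$ is undefined and the refined Gysin pullbacks $f_i^!:H_*(Y'_i)\to H_{*-c}(X'_i)$ you want do not exist inside the given theory $H_*$. (For instance, if $\cY=\spec\bk$ then $Y'_i\cong\cY'$ itself, which is typically a non-trivial stack.) The paper avoids this by splitting into two cases in Definition \ref{3.10}: when $g$ is quasi-projective one may indeed pull back a good system from $\cY$ as you do, but in the general case one instead starts from a good system $\{y'_i:Y'_i\to\cY'\}$ \emph{of $\cY'$ itself}, observes that each $g\circ y'_i:Y'_i\to\cY$ is quasi-projective (a morphism from a quasi-projective scheme to a stack with quasi-projective diagonal), forms the Cartesian squares $\cX\times_\cY Y'_i\hookrightarrow Y'_i$ over $f$, and takes the limit of those $f^!$'s. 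You should adopt this second construction for general $g$ and check that it agrees with your levelwise one when $g$ happens to be quasi-projective. Until then the map $f^!:\cH_*(\cY')\to\cH_{*-c}(\cX')$ is not defined in the required generality, so part (iv) of the weak intersection theory structure is missing.

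A minor secondary point: your claimed "product good system $\{Y_i\times_\cY Y'_j\}$" for comparing two good systems of the same stack is not what Lemma \ref{2.78} provides (that lemma concerns $\{X_i\times Y_i\to\cX\times\cY\}$ for two different stacks), and a doubly indexed family is not literally a good system in the sense of Definition \ref{3.1}. The paper's way to prove independence of the chosen good system is to compare with the full limit over all smooth $t:T\to\cX$ via Proposition \ref{3.6}; that argument is cleaner and you may want to use it instead.
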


We need to define refined Gysin pullbacks and exterior products.

\begin{defi}[Refined Gysin pullback for $\cH_*$]\label{3.10}
Let  $f : \cX \hookrightarrow \cY$ be a regular immersion of codimension $c$ that fit into a Cartesian square 
\beq\label{e7}\xymatrix{
\cX'\ar[r]^{f'}\ar[d]_{g'} & \cY'\ar[d]^g\\
\cX\ar[r]^f &\cY
}\eeq
in $\mathbf{St}_\bk^{ga}$.
First let us assume that $g$ is quasi-projective. For a good system $\{y_i:Y_i \to \cY\}_i$ of approximations, the induced
Cartesian square
$$\xymatrix{
\cX'\times_\cY Y_i\ar[r]^{{f'_i}}\ar[d]_{g'} & \cY'\times_\cY Y_i\ar[d]^g\\
\cX\times_\cY Y_i\ar[r]^{{f_i}} & Y_i
}$$
gives us the refined Gysin pullback 
$$f^!:\cH_d(\cY')=\varprojlim_i H_{d+d(y_i)}(\cY'\times_\cY Y_i)\lra \varprojlim_i H_{d+d(y_i)-c}(\cX'\times_\cY Y_i)=\cH_{d-c}(\cX')$$ 
by Lemma \ref{2.78}. 
In general, for a good system $\{y'_i : Y_i' \to \cY'\}$ of approximations, the compositions $g\circ y'_i : Y'_i \to \cY$ are quasi-projective. Hence, the refined Gysin pullbacks $f^!:H_*(Y_i')\to H_{*-c}(\cX'\times_{\cY'}Y'_i)$ for all $i$ give us the \emph{refined Gysin pullback}
$$ f^! : \cH_d(\cY')=\varprojlim_i H_{d+d(y'_i)}(Y'_i) \lra \varprojlim_{i} H_{d+d(y'_i)-c}(\cX'\times_{\cY'}Y'_i)=\cH_{d-c}(\cX').$$
\end{defi}
By the arguments in the proof of Proposition \ref{3.6}, it is straightforward that 
the refined Gysin pullback does not depend on the choice of a good system of approximations. Also, these refined Gysin pullbacks are functorial. %Moreover, the refined pullback coincides with the smooth pullback when $f$ is smooth.

\begin{defi}[Exterior product for $\cH_*$]\label{3.11}
Let $\cX$ and $\cY$ be algebraic stacks in $\mathbf{St}_\bk^{ga}$. For good systems $\{x_i:X_i\to \cX\}$ and $\{y_i:Y_i\to \cY\}$ of approximations, 
%the morphisms 
%$$\{x_i\times y_i:X_i\times Y_i\to \cX\times \cY\}_{i\ge 0}$$ 
%form a good system of approximations the product $\cX\times\cY$. For
and for  $\xi\in H_d(\cX)$ and $\eta\in H_e(\cY)$, we let 
$\{\xi_i\in H_{d+d(x_i)}(X_i)\}$ and $\{\eta_i\in H_{e+d(y_i)}(Y_i)\}$ be the images of $\xi$ and $\eta$ respectively, by \eqref{3.7}. Using Proposition \ref{3.6} and Lemma \ref{2.78}, the \emph{exterior product} is defined by
$$\times:\cH_*(\cX)\otimes \cH_*(\cY)\lra \cH_*(\cX\times \cY), \quad (\xi,\eta)\mapsto \{\xi_i\times \eta_i\}$$
where we used the exterior product $\times:H_*(X_i)\otimes H_*(Y_i)\to H_*(X_i\times Y_i)$ for quasi-projective schemes.  
\end{defi}

We leave it as an exercise to check that the exterior product in Definition \ref{3.11} is independent of the choice of good systems of approximations.

All the axioms for a weak intersection theory in Definition \ref{2.70} are easy to check for $\cH_*$ except possibly for the weak excision property.
\begin{prop} \label{3.12} The weak excision axiom (WEx) in Definition \ref{2.70} holds for $\cH_*(\cX)$ with $\cX\in \mathbf{St}_\bk^{ga}$. 
\end{prop}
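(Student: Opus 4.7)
The plan is to reduce the weak excision statement to the scheme-level excision sequence (Exc) on each layer of a good system of approximations, then pass to the limit via a Mittag-Leffler argument.

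First, I fix a good system $\{x_i : X_i \to \cX\}$ of approximations for $\cX$. Since $\imath : \cZ \hookrightarrow \cX$ and $\jmath : \cX - \cZ \hookrightarrow \cX$ are both quasi-projective, Lemma \ref{2.78} and Proposition \ref{3.3} give induced good systems $\{Z_i := X_i \times_\cX \cZ \to \cZ\}$ and $\{U_i := X_i - Z_i \to \cX - \cZ\}$ for $\cZ, \cX - \cZ \in \mathbf{St}_\bk^{ga}$, and Proposition \ref{3.6} reduces the problem to an analysis of the inverse systems $H_*(Z_i)$, $H_*(X_i)$, and $H_*(U_i)$. At each level, (Exc) provides a short exact sequence
$$0 \to K_i \to H_{*+d(x_i)}(X_i) \xrightarrow{\jmath_i^*} H_{*+d(x_i)}(U_i) \to 0, \qquad K_i := \mathrm{im}(\imath_{i,*}),$$
so surjectivity of $\jmath^* : \cH_*(\cX) \to \cH_*(\cX - \cZ)$ will follow once we show $\varprojlim{}^{1} K_i = 0$, which by Mittag-Leffler reduces to showing that the transition maps $K_{i+1} \to K_i$ are surjective.

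To verify this, take $\xi = \imath_{i,*} w \in K_i$ and lift $w$ to some $\tilde w \in H_{*+d(x_{i+1})}(Z_{i+1})$ using condition (4) of Definition \ref{3.1} applied to the quasi-projective morphism $\cZ \to \cX$. Then $\imath_{i+1,*}\tilde w \in K_{i+1}$, and the compatibility $x_{i,i+1}^* \circ \imath_{i+1,*} = \imath_{i,*} \circ (x_{i,i+1}|_{Z_i})^*$ yields $x_{i,i+1}^*(\imath_{i+1,*}\tilde w) = \xi$. This compatibility is the one delicate input, since $\imath$ is not yet assumed regular; I plan to deduce it by factoring the lci morphism $x_{i,i+1}$ as a regular closed immersion followed by a smooth morphism and applying axioms (4) and (5)(a) of Definition \ref{2.1} to each factor in turn. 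This is the step I expect to be the main technical obstacle.

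For the second statement, assume $\imath$ is regular of codimension $r$ with trivial normal bundle $N_{\cZ/\cX}$. Uniqueness of $\lambda_{\cZ/\cX}$ is immediate from the surjectivity of $\jmath^*$ proved above. For existence, I need to show $\imath^!$ vanishes on $\ker \jmath^* = \varprojlim K_i$. Since each $x_i$ is smooth, the base change $\imath_i : Z_i \to X_i$ is regular of codimension $r$ with normal bundle the pullback of $N_{\cZ/\cX}$ to $Z_i$, which remains trivial, so the self-intersection formula \eqref{6.51} together with the vanishing of the top Chern class of a trivial bundle (via the splitting principle) gives $\imath_i^! \circ \imath_{i,*} = c_r(N_{Z_i/X_i}) = 0$. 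Hence $\imath^!$ annihilates every $K_i$ and thus the whole kernel $\varprojlim K_i$, producing the desired factorization $\imath^! = \lambda_{\cZ/\cX} \circ \jmath^*$.
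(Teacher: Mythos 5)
Your proof is correct and follows essentially the same approach as the paper: reduce to the scheme-level excision sequence on the layers of a good system of approximations, use Definition \ref{3.1}(4) to get surjectivity of the transition maps $K_{i+1}\to K_i$, and conclude by passing to the limit. You simply spell out two details the paper elides — the commutativity $\varphi_i^*\circ\imath_{i+1,*}=\imath_{i,*}\circ(\varphi_i|_{Z_i})^*$ (via factoring the lci morphism and axioms (4), (5)(a)), and the vanishing $\imath_i^!\circ\imath_{i,*}=c_r(N_{Z_i/X_i})=0$ underlying the existence of $\lambda_{\cZ/\cX}$ as the limit of the $\lambda_{Z_i/X_i}$.
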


\begin{proof}
Let $\imath : \cZ \hookrightarrow\cX$ be a closed immersion of algebraic stack in $\mathbf{St}_\bk^{ga}$ and $\jmath:\cU=\cX-\cZ\hookrightarrow \cX$ denote its complement. 
Fix a good system $$\{x_i:X_i \lra \cX\}, \quad \{\varphi_i:X_i\lra X_{i+1}\}$$ of approximations so that
$Z_i=\cZ\times_{\cX} X_i\to \cZ$ and $U_i=\cU\times_{\cX}X_i\to \cU$ are good systems of approximations by Lemma \ref{2.78}. By the excision property of $H_*$ for quasi-projective schemes, we have an exact sequence
$$H_{d+d(x_i)} (Z_i) \mapright{{\imath_i}_*} H_{d+d(x_i)} (X_i) \mapright{\jmath_i^*} H_{d+d(x_i)} (U_i) \lra 0$$
where $\imath_i: Z_i \hookrightarrow X_i$  and $\jmath_i: U_i \hookrightarrow X_i$ denote the induced inclusions. 
Let $K_i=\mathrm{Im}({\imath_i}_*)$. 
By Definition \ref{3.1} (4), $(\varphi_i|_{Z_i})^*:H_{d+d(x_{i+1})} (Z_{i+1})\to H_{d+d(x_i)} (Z_i) $ is surjective for all $i$ and hence $\{K_{i+1} \to K_i\}$ is a surjective inverse system. 
Hence the exact sequences $0\to K_i\to H_{d+d(x_i)} (X_i) \to H_{d+d(x_i)} (U_i) \to 0$ induce
an exact sequence
$$0 \lra \varprojlim_i K_i \lra \cH_d (\cX)\lra \cH_d (\cU) \lra 0.$$
Therefore, $\jmath^* : \cH_*(\cX) \lra \cH_*(\cU)$ is surjective.

When $\imath$ is regular of codimension $r$, the homomorphism $\lambda_{\cZ/\cX}:\cH_*(\cU)\to \cH_{*-r}(\cZ)$ follows from taking the limit of $\lambda_{Z_i/X_i}$. 
%The commutativity of $\lambda_{\cZ/\cX}$ with projective pushforwards and smooth pullbacks is straightforward from its definition and the excision property for quasi-projective schemes. 
\end{proof}
This completes a proof of Theorem \ref{2.10}. 

%We end this section with the observation that \eqref{2.14} is functorial as follows.
%
%\begin{lemm}\label{3.30}
%If $f:\cY\to \cX$ is an lci morphism of algebraic stacks with relative dimension $c$ and $\cE$ is a vector bundle stack on $\cX$, then we have a commutative diagram 
%$$\xymatrix{
%\cH_*(\cE)\ar[r]\ar[d]_{0^*_{\cE}} &\cH_{*+r}(f^*\cE)\ar[d]^{0^*_{f^*\cE}}\\
%\cH_*(\cX)\ar[r]^{f^*} & \cH_{*+r}(\cY).
%}$$
%\end{lemm}
%The proof is striaghtforward and we omit it.

\bigskip

\section{Virtual pullbacks}\label{S8}

In this section, we generalize the virtual pullbacks in \cite{Man} to
\begin{enumerate}
\item weak intersection theories $H_*$ for an admissible category $\bcV$ of algebraic stacks with deformation spaces for \DM type morphisms and Gysin maps for vector bundle \emph{stacks} and
%satisfying Assumption \ref{2.84} and}
%for big admissible subcategories of $\mathbf{St}_\bk$
\item the limit intersection theories $\cH_*$ for $\mathbf{St}_\bk^{ga}$ arising from intersection theories for quasi-projective schemes.
\end{enumerate}  
The first case generalizes the constructions of Manolache in \cite{Man} for {Chow theory} and Qu in \cite{Qu} for {$K$-theory}.
The second case generalizes the construction of virtual fundamental class by Shen in \cite{Shen} for algebraic cobordism of quasi-projective schemes to quasi-projective \DM stacks (in the sense of \cite{KreG}).

\medskip

\subsection{Weak intersection theories for algebraic stacks}\label{S8.1}
%Weak intersection theories for big admissible categories
Throughout this subsection, we assume the following.
\begin{assu}\label{2.84}
Let $H_*$ be a weak intersection theory, possibly without the axioms (EH) and (PB), for 
an {admissible} category 
$\bcV$ in $\mathbf{St}_\bk$ satisfying the following:
\begin{enumerate}
\item (Deformation spaces) for a \DM type morphism $f:X\to Y$ in $\bcV$, the deformation space $M^\circ_{X/Y}$ lies in $\bcV$;
\item (Gysin maps for vector bundle stacks) for a vector bundle stack $\cE$ over $X \in \bcV$, $\cE$ lies in $\bcV$ and there is a \emph{Gysin map}
$$0^!_{\cE} : H_*(\cE) \lra H_{*+r}(X)$$
satisfying
\begin{enumerate}
\item $0^!_{\cE}\circ \pi^*=\id_{H_*(X)}$ where $\pi:\cE\to X$ is the projection;
\item $f_* \circ 0^!_{f^*\cE} = 0^!_{\cE}\circ f'_*$ for projective $f:Y\to X$ where $f':f^*\cE\to\cE$ is the induced map;
\item $f^* \circ 0^!_{\cE} = 0^!_{f^* \cE}\circ f'^*$ for smooth $f:Y\to X$;
\item $f^! \circ 0^!_{\cE} = 0^!_{f^* \cE}\circ f^!$ for a regular immersion $f:Y\to X$;
\item $0^!_{\cE\times_X \cE'} = 0^!_{\cE}\circ 0^!_{\pi^*\cE'}$ for a vector bundle stack $\cE'$ over $X$.
\end{enumerate}
\end{enumerate}
\end{assu}

%\begin{assu}\label{2.84}
%Let $H_*$ be a \emph{weak intersection theory} for 
%$\bcV$ in $\mathbf{St}_\bk$ such that
%\begin{enumerate}
%\item for a \DM type morphism $f:X\to Y$ in $\bcV$, the deformation space $M^\circ_{X/Y}$ lies in $\bcV$;
%\item for a vector bundle stack $\cE$ over $X \in \bcV$, $\cE$ lies in $\bcV$;
%\item for a lci morphism $f:X \to Y$ of constant relative dimension $d$ in $\bcV$ there is a lci pullback
%$$f^* : H_*(Y) \lra H_{*+d}(X)$$
%satisfying
%\begin{enumerate}
%\item if $f$ can be factored by the composition of a regular immersion and a smooth morphism, then $f^*$ coincides with the lci pullback in Definition \ref{2.1} (7);
%\item $(g\circ f)^*=f^*\circ g^*$ for lci morphisms $f:X\to Y$ and $g:Y\to Z$;
%\item $f^*\circ g'_* = g_*\circ f'^*$ for projective $g:Y'\to Y$ which is transversal to $f$ where $f':X\times_Y Y' \to Y'$ and $g' :X\times_Y Y' \to X$ are the induced maps.
%\end{enumerate}
%\end{enumerate}
%\end{assu}

Note that both Kresch's Chow theory for algebraic stacks which admit stratifications by quotient stacks and the algebraic K-theory {of coherent sheaves} for all algebraic stacks satisfy Assumption \ref{2.84} (cf. \cite{Kre, Qu}). Also any limit intersection theory $\cH_*$ induced from an intersection theory $H_*$ for quasi-projective schemes has Gysin maps for vector bundle stacks \eqref{2.14}.

\medskip

Recall that the {deformation} space for a morphism $f:X\to Y$ is a flat morphism $M^\circ_{X/Y}\to\PP^1$ whose fiber over $0$ is the relative intrinsic normal cone $\fC_{X/Y}$ of $f$ and  whose restriction to $\PP^1-\{0\}$ is $M^\circ_{X/Y}-\fC_{X/Y}=Y\times\bbA^1$ (cf \cite[\S5.1]{Kre}). 

Also recall from \cite{BeFa, Man} that a \emph{perfect obstruction theory} $\phi:E\to \bbL_f$ for a \DM type morphism $f:X\to Y$ is a morphism in the derived category of quasi-coherent sheaves on $X$ such that 
\begin{enumerate}
\item $E$ is locally a two-term complex $[E^{-1}\to E^0]$ of locally free sheaves;
\item $\bbL_f$ is the cotangent complex of $f$ truncated to the interval $[-1,0]$;
\item $h^0(\phi)$ is an isomorphism and $h^{-1}(\phi)$ is surjective.
\end{enumerate}
By \cite{BeFa, Man}, a perfect obstruction theory gives us a closed embedding
\beq\label{3.31}\imath_{X/Y}:\fC_{X/Y}\hookrightarrow \cE=h^1/h^0(E^\vee)\eeq
of the intrinsic normal cone 
into the vector bundle stack $\cE$ which is locally $[E_1/E_0]$ where $[E_0\to E_1]$ is the dual of a local presentation $[E^{-1}\to E^0]$ of $E$ by locally free sheaves. 

Under Assumption \ref{2.84}, we have the specialization homomorphism \eqref{2.80}
\beq\label{2.90}\mathrm{sp}_{X/Y}:H_*(Y)\lra H_*(\fC_{X/Y}).\eeq

\begin{defi} \label{2.86}
Under Assumption \ref{2.84}, the virtual pullback $f^!$ by a \DM type morphism $f:X\to Y$ equipped with a perfect obstruction theory $\phi:E\to \bbL_f$ is defined as  
\beq\label{2.82} f^!:H_*(Y)\mapright{\mathrm{sp}} H_*(\fC_{X/Y})\mapright{{\imath_{X/Y}}_*}
H_*(\cE)\mapright{0^!_\cE} H_{*+r}(X)\eeq
where $r$ is the rank of $E$. 
For a \DM stack $X$ with an embedding $\fC_X\hookrightarrow \cE$ of the normal cone of $X$ into a vector bundle stack $\cE$ on $X$, the morphism $p:X\to \Spec \bk$ defines the \emph{virtual fundamental class}
\beq\label{2.81}[X]\virt=p^!\mathbf{1}\in H_*(X)\eeq 
where $\mathbf{1}\in H_0(\Spec \bk)$ is the unit in Definition \ref{2.1} (i). 
\end{defi}
\begin{rema}
When $H_*$ is Kresch's Chow theory, $[X]\virt$ is the virtual fundamental class in \cite{BeFa, LiTi, Kre} and the virtual pullback $f^!$ was defined in \cite{Man}. When $H_*$ is the K-theory of coherent sheaves, $[X]\virt$ is the virtual structure sheaf in \cite{BeFa, YLe} and the virtual pullback $f^!$ was defined in \cite{Qu}.  
\end{rema}

\begin{theo}\label{2.85} Under Assumption \ref{2.84}, we have the following. 
\begin{enumerate}
\item The virtual pullback \eqref{2.82} commutes with projective pushforwards, smooth pullbacks and  other virtual pullbacks. 
\item The virtual pullback is functorial: if $f:X\to Y$ and $g:Y\to Z$ are \DM type morphisms in $\bcV$ with perfect obstruction theories $\phi_f:E_f\to \bbL_f$ and $\phi_g:E_g\to \bbL_g$ that  fit into a commutative diagram of exact triangles
\beq\label{2.87}\xymatrix{
E_g|_X\ar[r]\ar[d] & E_{g\circ f}\ar[r]\ar[d] & E_f\ar[r]\ar[d] &\\
\bbL_g|_X \ar[r] & \bbL_{g\circ f}\ar[r] & \bbL_f \ar[r] &
}\eeq
where the middle vertical arrow is a perfect obstruction theory of $g\circ f$, then 
$$f^!\circ g^!=(g\circ f)^!.$$
\item If the perfect obstruction theories for $f:X\to Y$ and $Y\to\Spec \bk$ satisfy the condition in (2), we have the equality
\beq\label{2.83}f^![Y]\virt=[X]\virt.\eeq
\end{enumerate}
\end{theo}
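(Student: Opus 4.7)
The strategy is to unpack the definition
$$f^!\;=\;0^!_{\cE}\,\circ\,(\iota_{X/Y})_*\,\circ\,\mathrm{sp}_{X/Y}$$
from \eqref{2.82} and verify each assertion factor-by-factor, using the naturality built into Assumption \ref{2.84} and the specialization construction \eqref{2.80}.

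\emph{Part (1).} I would check compatibility of each of the three factors with projective pushforwards, smooth pullbacks, and another virtual pullback. Naturality of the deformation space $M^\circ_{X/Y}$ under the relevant base changes, together with the uniqueness in (WEx) applied to $\fC_{X/Y}\subset M^\circ_{X/Y}$, gives the required compatibility for $\mathrm{sp}_{X/Y}$; naturality of the embedding $\iota_{X/Y}:\fC_{X/Y}\hookrightarrow\cE$ combined with functoriality of projective pushforward handles $(\iota_{X/Y})_*$; and the three clauses (b)--(d) of Assumption \ref{2.84}(2) handle $0^!_\cE$. Commutativity of $f^!$ with a second virtual pullback $h^!$ then follows by writing $h^!$ in the same three-step form and applying the compatibilities factor-by-factor; the commutativity of two Gysin maps for vector bundle stacks is Assumption \ref{2.84}(2)(e).

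\emph{Part (2), the main step.} Dualizing \eqref{2.87} and passing to $h^1/h^0$ produces an exact sequence of vector bundle stacks on $X$,
$$\cE_g|_X\,\lra\,\cE_{g\circ f}\,\lra\,\cE_f,$$
realizing $\cE_{g\circ f}$ as a torsor under (the pullback of) $\cE_g|_X$ over $\cE_f$. By clauses (a) and (e) of Assumption \ref{2.84}(2), the Gysin maps then fit into a compatible factorization
$$0^!_{\cE_{g\circ f}}\;=\;0^!_{\cE_g|_X}\circ 0^!_{\cE_f},$$
suitably interpreted via the torsor structure. The remaining and principal task is to compare the specializations: I would construct a two-parameter Costello-type double deformation space, obtained by first deforming $Z$ to $\fC_{Y/Z}$ along $Y\to Z$ and then deforming the resulting total space to $\fC_{X/Y}$ along $X\to Y$. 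Its generic fiber is $Z$, its two distinguished axes recover the composite $\mathrm{sp}_{X/Y}\circ\mathrm{sp}_{Y/Z}|_X$ and the single specialization $\mathrm{sp}_{X/Z}$ respectively, and its double degeneration is a cone stack over $X$ that surjects onto $\fC_{X/Z}$ and onto $\fC_{X/Y}\times_X\fC_{Y/Z}|_X$ with compatible embeddings into $\cE_{g\circ f}$. Chasing this diagram using Part (1) to commute specializations with the cone-stack embeddings, and then applying the Gysin factorization above, identifies $f^!\circ g^!$ with $(g\circ f)^!$. The main obstacle is the construction and analysis of this double deformation space inside $\bcV$: one must verify that the intermediate total spaces lie in $\bcV$ so that Assumption \ref{2.84} supplies specializations and Gysin maps at every stage, and that the cone-stack identifications of the deepest fiber hold in the required generality.

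\emph{Part (3).} Apply Part (2) to the composition $X\xrightarrow{f}Y\xrightarrow{p_Y}\Spec\bk$. By hypothesis the obstruction theories for $f$ and $p_Y$ fit with one for $p_X=p_Y\circ f$ into a diagram of the form \eqref{2.87}, so $p_X^!=f^!\circ p_Y^!$. Evaluating both sides at $\mathbf{1}\in H_0(\Spec\bk)$ and using Definition \ref{2.86} yields
$$[X]\virt\;=\;p_X^!\mathbf{1}\;=\;f^!\bigl(p_Y^!\mathbf{1}\bigr)\;=\;f^![Y]\virt,$$
which is \eqref{2.83}.
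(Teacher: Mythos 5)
Your plan follows the same route the paper takes: the paper's proof is essentially a pointer to Qu and Manolache (who in turn adapt Fulton and Kim--Kresch--Pantev), i.e.\ exactly the double-deformation-to-the-normal-cone argument you outline. Parts (1) and (3) of your sketch are correct as stated.

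For part (2) there are two points that need repair. First, the geometry of the double deformation space is not quite as you describe. The relevant object is $M^\circ_{X\times\Po/M^\circ_{Y/Z}}$ (deform $Z$ to $\fC_{Y/Z}$, then deform $M^\circ_{Y/Z}$ along the constant family $X\times\Po\hookrightarrow Y\times\Po\subset M^\circ_{Y/Z}$); its deepest degeneration $\fC_{X\times\Po/M^\circ_{Y/Z}}$ receives \emph{closed immersions} $a:\fC_{X/\fC_{Y/Z}}\hookrightarrow\fC_{X\times\Po/M^\circ_{Y/Z}}\hookleftarrow\fC_{X/Z}:b$ (not surjections, and not onto $\fC_{X/Y}\times_X\fC_{Y/Z}|_X$), which is what yields the key identity $a_*\circ\mathrm{sp}_{X/\fC_{Y/Z}}\circ\mathrm{sp}_{Y/Z}=b_*\circ\mathrm{sp}_{X/Z}$ (this is precisely what Lemma \ref{4.3a} records in the quasi-projective case). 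Second, and more substantively, the Gysin factorization $0^!_{\cE_{g\circ f}}=0^!_{\cE_g|_X}\circ 0^!_{\cE_f}$ you propose does not follow from Assumption \ref{2.84}(2): clause (e) only covers the split product $\cE\times_X\cE'$, and Assumption \ref{2.84} explicitly drops the axiom (EH), so you cannot invoke homotopy invariance to reduce the $\cE_g|_X$-torsor $\cE_{g\circ f}\to\cE_f$ to the product case. The cited arguments avoid this step: after establishing the specialization identity, they compare $f^!\circ g^!$ and $(g\circ f)^!$ directly using the compatibilities (b)--(e) of Assumption \ref{2.84}(2) together with the commutativity of specialization with projective pushforwards, smooth pullbacks and Gysin maps, rather than passing through a factorization of the Gysin map for the middle vector bundle stack. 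You should either replace the Gysin-factorization claim with this direct comparison, or supply a separate argument that it holds under Assumption \ref{2.84}; as written that step is the gap.
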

\begin{proof}
The exactly same arguments in \cite{Qu} which in turn are adapted from the arguments in \cite{Ful, Man, CKL} hold for weak intersection theories under Assumption \ref{2.84}. We omit the details. 
\end{proof} 
Note that we don't need (EH) and (PB) for virtual pullbacks and virtual fundamental classes in this subsection.

\medskip

\subsection{Virtual pullback for quasi-projective schemes} \label{S8.4}

Unfortunately, Assumption \ref{2.84} is not automatically satisfied because usually an intersection theory of schemes
does not extend to stacks except for a few cases.  
%All the conditions are easily checked except the requirement that 
Moreover, for a \DM type morphism $f:X\to Y$ in 
$\bcV$, the deformation space $M^\circ_{X/Y}$ should lie in $\bcV$. 
Hence the standard arguments in \S\ref{S8.1} do not give us the virtual pullbacks and the virtual fundamental classes for quasi-projective schemes or limit intersection theories in general. 

In this subsection, we will directly construct the virtual pullbacks and virtual fundamental classes 
for intersection theories on $\mathbf{QSch}_\bk$.
In the subsequent subsection, we will extend the construction 
to limit intersection theories on $\mathbf{St}^{ga}_\bk$. 
 
As in \S\ref{S4}, we will denote schemes by roman characters $X, Y, Z$ and stacks by calligraphic $\cX, \cY, \cZ$ to distinguish them. 

\medskip
 
Throughout the rest of this section, %we will assume the following.
%\begin{assu}\label{2.91}
we \emph{let $H_*$ be an intersection theory for the category $\mathbf{QSch}_\bk$ of quasi-projective schemes and let $\cH_*$ denote the limit intersection theory for $\mathbf{St}_\bk^{ga}$} defined by \eqref{2.6}.
%\end{assu}

\medskip

We begin our construction of the virtual pullback 
with the specialization homomorphisms for quasi-projective schemes.

Let $f : X \to Y$ be a morphism of quasi-projective schemes. Then $f$ can be factored as 
the composition
$$f : X \lra Z \mapright{g} Y$$
of a closed immersion into a quasi-projective scheme $Z$ and a smooth morphism $g$ of relative dimension $e$. 
Since the deformation space $M^\circ_{X/Z}$ is a quasi-projective scheme \cite[Chapter 5]{Ful}, 
we have the specialization homomorphism (cf. \eqref{2.18})
$$\mathrm{sp}_{X/Z}: H_*(Z)\lra H_*(C_{X/Z}).$$

The intrinsic normal cone of $f$ is by definition the quotient 
$$\fC_{X/Y}=[C_{X/Z}/\bbT_{Z/Y}|_X]$$ of the normal cone $C_{X/Z}$ by the relative tangent bundle $\bbT_{Z/Y}$ of $g$.
Hence the quotient map
$$\pi : C_{X/Z} \lra \fC_{X/Y}$$
is a $\bbT_{Z/Y}|_X$-torsor. 
By the extended homotopy for $\cH_*$, the smooth pullback 
$$\pi^*:\cH_*(\fC_{X/Y})\lra \cH_{*+e}(C_{X/Z})=H_{*+e}(C_{X/Z})$$ 
is an isomorphism.
\begin{defi} \label{4.1}
For a morphism $f:X\to Y$ of quasi-projective schemes, the \emph{specialization homomorphism}  is  the composition
\beq \label{4.21}
\mathrm{sp}_{X/Y} : H_*(Y) \mapright{g^*} H_{*+e}(Z) \mapright{\mathrm{sp}_{X/Z}} H_{*+e}(C_{X/Z}) \mapright{(\pi^*)^{-1}} \cH_*(\fC_{X/Y}).
\eeq
\end{defi}

The specialization homomorphism  $\mathrm{sp}_{X/Y}$ is well defined.
\begin{lemm}\label{4.2}
$\mathrm{sp}_{X/Y}$ is independent of the factorization $X \to Z \to Y$.
\end{lemm}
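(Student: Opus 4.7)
The plan is to reduce the comparison of two factorizations to a common refinement via a fiber product, and then to reduce that to the compatibility of the specialization with smooth pullback. Given two factorizations $X \hookrightarrow Z \xrightarrow{g} Y$ and $X \hookrightarrow Z' \xrightarrow{g'} Y$, form $W := Z \times_Y Z'$, which is again a quasi-projective scheme. Since $X$ is quasi-projective (hence separated) and $X \to Z$, $X \to Z'$ are closed immersions over $Y$, the induced map $X \to W$ is a closed immersion; the projections $p_1 : W \to Z$ and $p_2 : W \to Z'$ are smooth by base change. Thus both factorizations are dominated by $X \hookrightarrow W \to Y$, and by transitivity it suffices to show the following: for a diagram $X \hookrightarrow W \xrightarrow{p} Z \xrightarrow{g} Y$ with $p$ smooth of relative dimension $d$, the specialization computed via $Z$ agrees with the specialization computed via $W$.

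For such a diagram, the intrinsic normal cone $\fC_{X/Y}$ admits the two presentations $[C_{X/Z}/\bbT_{Z/Y}|_X]$ and $[C_{X/W}/\bbT_{W/Y}|_X]$, with respective quotient maps $\pi$ and $\pi'$ to $\fC_{X/Y}$. The natural morphism $q : C_{X/W} \to C_{X/Z}$ is a torsor under $\bbT_{W/Z}|_X$ and satisfies $\pi \circ q = \pi'$. Using the extended homotopy identity $(\pi'^*)^{-1} = (\pi^*)^{-1} \circ (q^*)^{-1}$ together with $(g \circ p)^* = p^* \circ g^*$, the equality of the two specialization maps reduces to the single identity
\[
q^* \circ \mathrm{sp}_{X/Z} = \mathrm{sp}_{X/W} \circ p^* \colon H_*(Z) \lra H_{*+d}(C_{X/W}).
\]

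To establish this identity I invoke the functoriality of the deformation space. The smooth morphism $p \times \id_{\PP^1} : W \times \PP^1 \to Z \times \PP^1$, together with the compatible closed subschemes $X \times \{0\}$ on both sides, and the universal property of blow-ups produce a smooth morphism $P : M^\circ_{X/W} \to M^\circ_{X/Z}$ over $\PP^1$ which restricts to $p \times \id_{\bbA^1}$ on the open part and to $q$ on the central fiber. Since the specialization is assembled from the smooth pullback by $Y \times \bbA^1 \to Y$, its lift via the excision sequence, and the refined Gysin pullback $\imath^!$ along the Cartier divisor inclusion $C \hookrightarrow M^\circ$, and each of these commutes with smooth pullback along $P$ by the axioms of Definition \ref{2.1} (items (3), (5)(b)) together with the functoriality of the excision sequence, the identity follows by a diagram chase.

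The main obstacle is the third step, in particular the construction of the smooth morphism $P$ and the identification of its restrictions on the two fiber types. One must check that the ideal of $X \times \{0\} \subset Z \times \PP^1$ becomes invertible on $M^\circ_{X/W}$ after pullback, so that the universal property of blow-ups produces $P$, and verify that the strict transforms of $Z \times \{0\}$ and $W \times \{0\}$ match under $P$ so the complements also match. This is a standard but delicate blow-up computation, exploiting the compatibility of the two embeddings $X \hookrightarrow W$ and $X \hookrightarrow Z$ with the smooth morphism $p$.
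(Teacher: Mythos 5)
Your proposal follows essentially the same route as the paper's proof: reduce to a common refinement $W=Z\times_Y Z'$ where the comparison morphism is smooth, reduce further to the identity $q^*\circ \mathrm{sp}_{X/Z}=\mathrm{sp}_{X/W}\circ p^*$, and prove it by constructing a morphism of deformation spaces $M^\circ_{X/W}\to M^\circ_{X/Z}$ compatible with the inclusions of the normal cones and the open $\bbA^1$-parts. The ``main obstacle'' you flag is exactly where the paper's argument does its work, and it closes it cleanly: since $M^\circ_{X/W}$ is by definition open in $\mathrm{Bl}_{X\times\{0\}}(W\times\PP^1)$, the composite $M^\circ_{X/W}\to W\times\PP^1\to Z\times\PP^1$ pulls $X\times\{0\}$ back to the Cartier divisor $C_{X/W}$ (the exceptional locus meets $M^\circ_{X/W}$ exactly there, since the strict transform of $W\times\{0\}$ has been removed), so the universal property of the blowup furnishes the desired map, and the resulting diagram of Cartesian squares together with compatibility of Gysin pullbacks with smooth pullbacks yields the identity.
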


\begin{proof}
Choose another factorization $X \mapright{} Z' \mapright{g'} Y$ of $f$ by a closed immersion and a smooth morphism for some quasi-projective scheme $Z'$. After replacing $Z'$ by $Z\times_Y Z'$, we may assume that there is a smooth morphism $a : Z' \to Z$ making the diagram
$$\xymatrix{
& Z'\ar[d]^a \ar[rd]^{g'} & \\
X \ar[ru]^{} \ar[r]^{} & Z \ar[r]^g & Y
}$$
commute. As $M^\circ_{X/Z'}$ is open in the blowup of $Z'\times \PP^1$ along $X\times \{0\}$, 
we have the morphism 
\beq\label{4.50}M^\circ_{X/Z'}\lra Z'\times \PP^1\mapright{a\times\id} Z\times \PP^1\eeq
and the inverse image of $X\times \{0\}$ is the Cartier divisor $C_{X/Z'}$. Hence
\eqref{4.50} factors through a morphism 
$ M^\circ_{X/Z'}\lra M^\circ_{X/Z} $
and we have the commutative diagram of Cartesian squares
$$\xymatrix{
Z'\ar[d]_a \ar[r] & M^{\circ}_{X/Z'} \ar[d] & C_{X/Z'} \ar[d]^b\ar[l]\\
Z\ar[r] & M^{\circ}_{X/Z}  & C_{X/Z}.\ar[l]
}$$
Since  Gysin pullbacks commute with smooth pullbacks, we have the equality
$$ b^* \circ \mathrm{sp}_{X/Z} = \mathrm{sp}_{X/Z'} \circ a^*. $$

The morphism $b$ is smooth by the exact sequence
\[ 0\lra \bbT_{Z'/Z}|_X\lra C_{X/Z'} \mapright{b} C_{X/Z}\lra 0\]
and the quotient $\pi':C_{X/Z'}\to \fC_{X/Y}$ by $\bbT_{Z'/Y}|_X$ equals $\pi\circ b$.
Therefore, $b^*\circ \pi^*=(\pi')^*$ and so
$$(\pi^*)^{-1}\circ \mathrm{sp}_{X/Z}\circ g^*=({\pi'}^*)^{-1}\circ b^*\circ \mathrm{sp}_{X/Z}\circ g^*$$
$$=({\pi'}^*)^{-1}\circ\mathrm{sp}_{X/Z'} \circ a^*\circ g^*=({\pi'}^*)^{-1}\circ\mathrm{sp}_{X/Z'} \circ  {g'}^*$$
as desired.
\end{proof}

We define the virtual pullback for a morphism of quasi-projective schemes as follows.
\begin{defi}\label{3.32}
Let $f:X\to Y$ be a morphism of quasi-projective schemes equpped with 
an embedding of the intrinsic normal cone $\fC_{X/Y}\hookrightarrow \cE$ into the vector bundle stack $\cE=h^1/h^0(E^\vee)$. Then the \emph{virtual pullback} is defined by
\beq\label{3.33}
f^!:H_*(Y)\mapright{\mathrm{sp}_{X/Y}} \cH_*(\fC_{X/Y})\mapright{{\imath_{X/Y}}_*} \cH_*(\cE)\mapright{0^!_\cE} \cH_{*+r}(X)=H_{*+r}(X)
\eeq
where the second arrow is the projective pushforward by the embedding \eqref{3.31}, $r$ is the rank of  $E$ and $0^!_\cE$ is \eqref{2.14}. For a quasi-projective scheme $X$ with an embedding $\fC_{X}\hookrightarrow \cE$ into a vector bundle stack, the morphism $p:X\to \Spec \bk$ defines the \emph{virtual fundamental class}
\beq\label{3.51} [X]\virt=p^!\mathbf{1}\in H_*(X)\eeq 
where $\mathbf{1}\in H_0(\Spec \bk)$ is the unit in Definition \ref{2.1} (i).
\end{defi}
\begin{rema}\label{4.3b}
(1) When $H_*$ is the algebraic cobordism in \cite{LeMo}, the virtual fundamental class $[X]\virt$ was defined by {Shen in \cite{Shen}}. In \cite{LoSc}, Lowrey-Sch{\"u}rg defined the virtual cobordism class when $X$ is the classical truncation of a quasi-smooth derived scheme.

(2) If the deformation space $M_{X/Y}^\circ\to \PP^1$ whose fiber over $t=0$ (resp. $t\ne 0$) is the intrinsic normal cone $\fC_{X/Y}=[C_{X/Z}/\bbT_{Z/Y}|_X]$ (resp. $Y$) lies in $\mathbf{St}^{ga}_\bk$, then  the specialization homomorphism \eqref{4.21} coincides with \eqref{2.90}. This follows from the construction of 
$M_{X/Y}^\circ$ as the groupoid $$[M_{X/R}^\circ\rightrightarrows M_{X/Z}^\circ]$$ where $R=Z\times_YZ$ \cite[Theorem 2.31]{Man}. 
Hence the virtual pullback \eqref{3.33} is a generalization of \eqref{2.82} to the case where $M_{X/Y}^\circ$
does not necessarily admit a good system of approximations. 

(3) Definition \ref{3.32} works for any weak intersection theory $H_*$ on an admissible category $\bcV$ of algebraic stacks and 
for a quasi-projective morphism $f:\cX\to \cY$ in $\bcV$ that factors as 
$$\cX\mapright{\jmath} \cZ\mapright{p} \cY$$
with $\jmath$ a closed immersion and $p$ a smooth quasi-projective morphism.
%such that $p$ is smooth and $M^\circ_{\cX/\cZ}\in \bcV$. 
\end{rema}

The virtual pullback in Definition \ref{3.32} commutes with another virtual pullback and is functorial.

\begin{theo}\label{3.34} For any intersection theory $H_*$ for the category $\mathbf{QSch}_\bk$ of quasi-projective schemes, 
(1)-(3) in Theorem \ref{2.85} hold. 
\end{theo}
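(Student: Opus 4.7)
The plan is to adapt the proofs of Manolache \cite{Man} and Qu \cite{Qu} to our setting. The subtlety is that $H_*$ does not a priori extend to algebraic stacks, so we cannot literally repeat those arguments; instead we exploit the fact that the limit theory $\cH_*$ on $\mathbf{St}_\bk^{ga}$ provides the Gysin map $0^!_\cE$ for vector bundle stacks (see \eqref{2.14}), and that this Gysin map satisfies the compatibilities listed in Assumption \ref{2.84}(2). Throughout, I would factor each morphism $f:X\to Y$ of quasi-projective schemes as $X\hookrightarrow Z\to Y$ with $Z$ quasi-projective and $Z\to Y$ smooth of relative dimension $e$, so that the deformation space $M^\circ_{X/Z}$ and the normal cone $C_{X/Z}$ are quasi-projective schemes living in $\mathbf{QSch}_\bk$. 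The extended homotopy isomorphism $\pi^*:\cH_*(\fC_{X/Y})\cong H_{*+e}(C_{X/Z})$ then transports scheme-level computations to the cone stack $\fC_{X/Y}$.

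For (1), each compatibility breaks up along \eqref{3.33} into compatibilities for $\mathrm{sp}_{X/Y}$, the pushforward by $\imath_{X/Y}$, and $0^!_\cE$. The compatibility of $\mathrm{sp}_{X/Y}$ with projective pushforwards, smooth pullbacks, and other refined Gysin pullbacks is immediate from the corresponding scheme-level properties of $\mathrm{sp}_{X/Z}$ combined with the naturality of $\pi$ in the factorization $X\hookrightarrow Z\to Y$, exactly as in Lemma \ref{4.2}. The compatibility of $0^!_\cE$ with the same operations is built into its construction in \S\ref{S4.1} via extended homotopy. Commutation of two virtual pullbacks reduces to commutation of two scheme-level specializations, which I would handle by the standard double deformation space $M^\circ_{X/Y/Z}$; since this is an open subscheme of iterated blowups of quasi-projective schemes, it lies in $\mathbf{QSch}_\bk$ and the argument of \cite{Ful, Man} is entirely within $H_*$.

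For (2), given $f:X\to Y$ and $g:Y\to Z$ with compatible perfect obstruction theories as in \eqref{2.87}, I would choose factorizations $X\hookrightarrow P\to Y$ and $Y\hookrightarrow Q\to Z$ by closed immersions into quasi-projective schemes followed by smooth morphisms; then $g\circ f$ factors as $X\hookrightarrow P\times_YQ\to Z$. The distinguished triangle \eqref{2.87} yields a short exact sequence $0\to f^*\cE_g\to \cE_{g\circ f}\to\cE_f\to 0$ of vector bundle stacks on $X$, together with an analogous compatibility among the intrinsic normal cones $\fC_{X/Y}$, $f^*\fC_{Y/Z}$, and $\fC_{X/Z}$, reflected at the ambient scheme level by $C_{X/P}$, $f^*C_{Y/Q}$, and $C_{X/P\times_YQ}$. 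Applying Manolache's double deformation argument \cite{Man} (still a scheme, hence inside $\mathbf{QSch}_\bk$) reduces $f^!\circ g^!=(g\circ f)^!$ to the compatibility of $0^!_\cE$ on $\cH_*$ with short exact sequences of vector bundle stacks, which holds by construction of \eqref{2.14}. Part (3) is then immediate from (2) applied to $Y\to \Spec \bk$. The main obstacle will be reconciling the scheme-level double deformation with the stack-level cone exact triangles so that the verification can be carried out entirely within $\cH_*$; once this bookkeeping is in place, the remaining work is standard diagram chasing using the axioms of $H_*$ on $\mathbf{QSch}_\bk$ and the properties of $\cH_*$ established in \S\ref{S4}.
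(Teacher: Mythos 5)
Your proposal follows essentially the same route as the paper: both reduce to scheme-level computations by factoring through closed immersions into smooth quasi-projective schemes, observe that the relevant (double) deformation spaces are themselves quasi-projective schemes in $\mathbf{QSch}_\bk$, and then invoke the axioms of $H_*$ together with the extended homotopy isomorphism and the properties of $0^!_\cE$ from \eqref{2.14} — the paper packages exactly these reductions as Lemma \ref{4.3} and Lemma \ref{4.3a}. One small correction: for the composite factorization in (2), $P\times_Y Q$ is not well-defined since $Q$ lives over $Z$, not $Y$; the standard fix (used in Lemma \ref{4.3a}) is to extend the smooth $P\to Y$ to a smooth scheme over $Q$ whose restriction to $Y$ recovers $P$, giving a commutative square of closed immersions and smooth morphisms.
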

\begin{proof} (3) is immediate from (2). 
The commutativity of $f^!$ with projective pushforwards and smooth pullbacks follows from Lemma \ref{4.3} below.
% and Lemma \ref{3.30}. 
The commutativity with other virtual pullbacks and functoriality (2) are proved by the usual arguments using the double deformation space in {\cite{Qu, Man, Kre, Kresch99, KKP, Ful}} together with Lemmas \ref{4.3} and \ref{4.3a}. 
\end{proof}

\begin{lemm}\label{4.3} 
Consider a Cartesian square
\beq\label{4.22}
\xymatrix{
X'\ar[r]^{f'}\ar[d]_h & Y' \ar[d]^g\\
X\ar[r]^f & Y
}
\eeq
of quasi-projective schemes. Then we have the following morphisms
$$k : \fC_{X'/Y'} \mapright{\jmath} h^*\fC_{X/Y}\to \fC_{X/Y},$$ 
$$\fC_{\fC_{X/Y}\times_Y Y'/\fC_{X/Y}} \mapright{a} \fC_{X/Y} \times_Y \fC_{Y'/Y} \mapleft{b} \fC_{\fC_{Y'/Y}\times_Y X/\fC_{Y'/Y}}$$
{where $\jmath$, $a$ and $b$ are closed immersions.}
\begin{enumerate}
\item If $g$ is projective, then
$ \mathrm{sp}_{X/Y} \circ g_* = k_* \circ \mathrm{sp}_{X'/Y'}.$
\item %If $g$ is lci and \eqref{4.22} is transversal, 
{If $g$ is smooth}, then $\jmath$ is an isomorphism and
$\mathrm{sp}_{X'/Y'} \circ g^* = k^*\circ \mathrm{sp}_{X/Y}.$
\item $a_* \circ \mathrm{sp}_{\fC_{X/Y}\times_Y Y'/\fC_{X/Y}} \circ \mathrm{sp}_{X/Y} = b_* \circ \mathrm{sp}_{\fC_{Y'/Y}\times_Y X/\fC_{Y'/Y}} \circ \mathrm{sp}_{Y'/Y}.$
\end{enumerate}
\end{lemm}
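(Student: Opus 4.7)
\emph{Proof plan.} The strategy is to exploit the definition of $\mathrm{sp}_{X/Y}$ via a factorization $X \hookrightarrow Z \xrightarrow{p} Y$ into a closed immersion and a smooth morphism, and reduce each assertion to the classical compatibilities of the Fulton--MacPherson specialization for the closed immersion $X \hookrightarrow Z$ together with base change for $p^*$.

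For (1), fix a factorization $X \hookrightarrow Z \xrightarrow{p} Y$ with $p$ smooth of relative dimension $e$. Base changing along $g$ yields a factorization $X' \hookrightarrow Z' := Z\times_Y Y' \xrightarrow{p'} Y'$, and the induced morphism $\bar g : Z' \to Z$ is projective since $g$ is. The Cartesian square with $p$ smooth and $g$ projective gives $p^*\circ g_* = \bar g_* \circ {p'}^*$ by Definition \ref{2.1}(4). The classical compatibility of specialization with projective pushforward (Fulton, axiom (5)(a) of Definition \ref{2.1} applied to the square $C_{X'/Z'}\to C_{X/Z}$ over $Z'\to Z$) gives $\mathrm{sp}_{X/Z}\circ \bar g_* = (C_{X'/Z'}\to C_{X/Z})_* \circ \mathrm{sp}_{X'/Z'}$. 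The quotient maps $\pi$ and $\pi'$ by the relative tangent bundles fit into a Cartesian square of torsors, so pushforward along $C_{X'/Z'}\to C_{X/Z}$ intertwines $(\pi^*)^{-1}$ and $({\pi'}^*)^{-1}$ via $k_*$. Assembling these three compatibilities yields (1).

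For (2), use the same base change but with $g$ smooth. Then the induced map $C_{X'/Z'} \to h^* C_{X/Z}$ is an isomorphism, because normal cone formation is compatible with smooth base change; quotienting by the (compatible) tangent bundles gives the isomorphism $\jmath$. The identity $\mathrm{sp}_{X'/Y'}\circ g^* = k^*\circ \mathrm{sp}_{X/Y}$ then follows from the compatibility of specialization of closed immersions with smooth pullback (axiom (5)(b) of Definition \ref{2.1}), combined with the functoriality of $p^*$ under composition of smooth morphisms.

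Part (3) is the main obstacle and the real content of the lemma. Choose factorizations $X \hookrightarrow Z \xrightarrow{p_Z} Y$ and $Y' \hookrightarrow W \xrightarrow{p_W} Y$, and form the double deformation space obtained by successively deforming $Z\times_Y W$ first to the normal cone of $X\times_Y W$ in $Z\times_Y W$ and then to the normal cone of $Z\times_Y Y'$ in $Z\times_Y W$; this is a scheme flat over $\PP^1\times\PP^1$ whose fiber over $(0,0)$ is the fiber product $C_{X\times_Y W/Z\times_Y W}\times C_{Z\times_Y Y'/Z\times_Y W}$ of the two normal cones (relative to $X\times_Y Y'$). The two iterated specializations on the left- and right-hand sides of (3) factor through intersection with the two Cartier divisors at $0$ in the two $\PP^1$-factors; these two divisor intersections commute by axiom (5)(c) of Definition \ref{2.1}. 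One then identifies both resulting classes, after passing to the quotient by the relevant relative tangent bundles (which produces the closed immersions $a$ and $b$ into $\fC_{X/Y}\times_Y \fC_{Y'/Y}$), with a common element in $\cH_*(\fC_{X/Y}\times_Y \fC_{Y'/Y})$. The bookkeeping for the quotient by tangent bundles and the identification of the two special fibers of the double deformation as $a_*(\cdots)$ and $b_*(\cdots)$ respectively is the main technical point, and mirrors the argument used in \cite{Man, Kre} for the double deformation in the Chow setting.
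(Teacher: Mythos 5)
Your treatment of (1) and (2) matches the paper in substance: both reduce to the closed-immersion case (you do it by explicitly base-changing the chosen factorization $X\hookrightarrow Z\to Y$ along $g$; the paper says ``we may assume $f$ is a closed immersion'' and then invokes the transversal Cartesian squares relating the deformation spaces $M^\circ_{X'/Y'}\to M^\circ_{X/Y}$), and both then appeal to axioms (4), (5)(a), (5)(b) of Definition~\ref{2.1} plus the torsor description of $\fC_{X/Y}$. The one substantive remark worth flagging for (2): you should also verify that the factorization $X'\hookrightarrow Z'\to Y'$ obtained by base change is one to which Lemma~\ref{4.2} applies, i.e.\ that the claimed isomorphism $C_{X'/Z'}\cong h^*C_{X/Z}$ is used \emph{before} quotienting by the tangent bundle; this is the content of $\jmath$ being an isomorphism and is what makes $k$ smooth.

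For (3) your route genuinely differs from the paper's. The paper first \emph{reduces to the case where both $f$ and $g$ are closed immersions}, using part (2) to commute the specializations past the smooth factors; after this reduction, $\fC_{X/Y}=C_{X/Z}$ and $\fC_{Y'/Y}=C_{Y'/W}$ become schemes, every cone stack in the statement collapses to a cone scheme, and the entire argument can be run with the \emph{scheme-level} double deformation $M^\circ_{X/Y}\times_Y M^\circ_{Y'/Y}$ exactly as in the Chow case. Your plan instead keeps the factorizations $Z$, $W$ and builds a double deformation over $Z\times_Y W$. Two comments. First, your description of the double deformation as ``successively deforming'' should really be the fiber product $M^\circ_{X\times_Y W/Z\times_Y W}\times_{Z\times_Y W}M^\circ_{Z\times_Y Y'/Z\times_Y W}$; as written, a literally ``successive'' deformation is a nested construction of the type used in Lemma~\ref{4.3a}, not here. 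Second, and more seriously, by not reducing to closed immersions first you are forced to track the quotient by $\bbT_{Z/Y}|_X$ and $\bbT_{W/Y}|_{Y'}$ through the double deformation, which is exactly where the cone stacks $\fC_{\fC_{X/Y}\times_Y Y'/\fC_{X/Y}}$ and $\fC_{\fC_{Y'/Y}\times_Y X/\fC_{Y'/Y}}$ live; you acknowledge this as ``the main technical point'' but leave it unaddressed. The paper's reduction to closed immersions is precisely what eliminates this bookkeeping. The second step --- commuting the two Cartier-divisor intersections using axiom (5)(c) --- is the right ingredient in both versions, so your plan is not wrong, but as written it leaves open the part that the paper's reduction was designed to avoid. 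One further point the paper makes explicit and you do not: the term $\mathrm{sp}_{\fC_{X/Y}\times_Y Y'/\fC_{X/Y}}$ only makes sense via Remark~\ref{4.3b}(3), since $\fC_{X/Y}$ is a stack; after the paper's reduction it is again the scheme-level specialization.
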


\begin{proof}
{For (1) and (2), it is easy to see that we may assume that $f$ is a closed immersion since smooth pullbacks are functorial and commute with projective pushforwards. Then the diagram
\beq\label{4.24}\xymatrix{
Y'\ar[d]_g \ar[r] & M^{\circ}_{X'/Y'} \ar[d] & C_{X'/Y'} \ar[d]^k\ar[l]\\
Y\ar[r] & M^{\circ}_{X/Y}  & C_{X/Y}\ar[l]
}\eeq
of two transversal Cartesian squares proves (1) and (2) as in \cite{Man,Qu}.}

%Then we have a closed immersion 
%\beq\label{4.23}
%M^{\circ}_{X'/Y'} \hookrightarrow M^{\circ}_{X/Y} \times_Y Y'
%\eeq
%of deformation spaces and a diagram
%\beq\label{4.24}\xymatrix{
%Y'\ar[d]_g \ar[r] & M^{\circ}_{X'/Y'} \ar[d] & C_{X'/Y'} \ar[d]^k\ar[l]\\
%Y\ar[r] & M^{\circ}_{X/Y}  & C_{X/Y}\ar[l]
%}\eeq
%of transversal Cartesian squares. 

%If $g$ is projective, then the vertical arrows in \eqref{4.24} are projective. (1) follows from the fact that lci pullbacks commute with projective pushforwards. 

%If $g$ is lci and transversal to $f$, 
%\eqref{4.23} is an isomorphism and the vertical arrows in \eqref{4.24} are lci. 
%(2) follows from the compatibility axioms of smooth and refined Gysin pullbacks in Definition \ref{2.1}. 

The specialization homomorphism in (3) for the quasi-projective morphism $\fC_{X/Y}\times_Y Y' \to \fC_{X/Y}$ is defined by Remark \ref{4.3b} (3).
We may assume that $f$ and $g$ are closed immersions since specialization homomorphisms commute with smooth pullbacks. Then the usual argument in \cite{Ful, Man, Qu} using the double deformation space $M^{\circ}_{X/Y}\times_Y M^{\circ}_{Y'/Y}$ proves (3) since the deformation spaces are quasi-projective schemes.
\end{proof}

\begin{lemm}\label{4.3a}
Let $f:X\to Y$ and $g: Y \to Z$ be morphisms of quasi-projective schemes. Then two closed immersions 
$$\fC_{X/\fC_{Y/Z}} \mapright{a} \fC_{X\times\Po/M^{\circ}_{Y/Z}} \mapleft{b} \fC_{X/Z}$$ 
give us the identity
$$a_* \circ \mathrm{sp}_{X/\fC_{Y/Z}} \circ \mathrm{sp}_{Y/Z} = b_* \circ \mathrm{sp}_{X/Z}.$$
\end{lemm}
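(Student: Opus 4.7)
The plan is to imitate the classical double deformation argument of Fulton \cite[Chapter 6]{Ful}, adapted to the present intersection-theoretic setting.

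First, we reduce to the case where $f:X\to Y$ and $g:Y\to Z$ are closed immersions. Factor $f$ as $X\hookrightarrow P\to Y$ and $g$ as $Y\hookrightarrow Q\to Z$ with the second arrows smooth. Since specialization commutes with smooth pullback by Lemma \ref{4.3}(2), and the deformation spaces appearing on both sides of the desired identity are compatible with smooth base change on the target, we may replace $Z$ by $Q$ and $Y$ by $P\times_YQ$, and then $Y$ by $P$, reducing to the case of closed immersions. In this case $\fC_{Y/Z}=C_{Y/Z}$, $\fC_{X/Z}=C_{X/Z}$ and $M^\circ_{Y/Z}$ is a quasi-projective scheme (as constructed in \cite[\S5]{Ful}). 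The inclusion $X\times \Po\hookrightarrow M^\circ_{Y/Z}$ is then a closed immersion of schemes, so all specialization maps in the statement are covered by Definition \ref{4.1}.

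Next, I construct the double deformation space $W:=M^\circ_{X\times\Po/M^\circ_{Y/Z}}$, obtained by blowing up $M^\circ_{Y/Z}\times \Po$ along $X\times\Po\times\{0\}$ and deleting the strict transform of $M^\circ_{Y/Z}\times \{0\}$. The scheme $W$ carries a natural map to $\Po\times \Po$, flat in a neighborhood of $\{(t,s): s=0\}$, whose fiber over the second coordinate at $s=0$ is $C_{X\times\Po/M^\circ_{Y/Z}}$. Analyzing the first coordinate: the restriction of $M^\circ_{Y/Z}$ to $t\ne 0$ is $Z\times \bbA^1$, so $W|_{t\ne 0}$ identifies with $M^\circ_{X/Z}\times \bbA^1$, and the corresponding restriction of $C_{X\times\Po/M^\circ_{Y/Z}}$ identifies with $C_{X/Z}\times \bbA^1$; this is where the closed immersion $b$ arises. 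Over $t=0$, where $M^\circ_{Y/Z}$ restricts to $C_{Y/Z}=\fC_{Y/Z}$, the slice becomes $M^\circ_{X/\fC_{Y/Z}}$ and the corresponding piece of the cone is $\fC_{X/\fC_{Y/Z}}$; this is where the closed immersion $a$ arises.

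Finally, with this double deformation in hand, both sides of the asserted identity are obtained by applying intersection-with-divisor operations on $W$ in two different orders: the LHS specializes first in the $t$-direction (giving $\mathrm{sp}_{Y/Z}$) and then in the $s$-direction (giving $\mathrm{sp}_{X/\fC_{Y/Z}}$), while the RHS specializes in the $s$-direction first at $t\ne 0$ (giving $\mathrm{sp}_{X/Z}$) and then transports via flatness to the fiber over $(0,0)$. The commutativity of the two orders is furnished by the compatibility of refined intersection with divisors on the flat family $W\to \Po\times\Po$, combined with the extended homotopy used to implement the cone quotients $C_{X/Z}\to \fC_{X/Z}$ and $C_{X/C_{Y/Z}}\to \fC_{X/\fC_{Y/Z}}$. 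The closed immersions $a$ and $b$ are identified with the embeddings of these two special fibers into the global cone $\fC_{X\times\Po/M^\circ_{Y/Z}}$, so both compositions land in $H_*(\fC_{X\times\Po/M^\circ_{Y/Z}})$ and agree there.

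The principal difficulty is bookkeeping: verifying that the deformation spaces factor through the right quotients so that the intrinsic normal cones are correctly identified, and that the two closed immersions $a,b$ really arise as the pushforward maps appearing in the statement. Once these identifications are in place, the actual equality is a standard consequence of the functoriality of specialization along the two legs of $\Po\times\Po$, together with Lemma \ref{4.3} applied fiberwise.
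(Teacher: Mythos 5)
Your overall strategy is the same as the paper's: reduce to the case where both $f$ and $g$ are closed immersions (invoking Lemma \ref{4.3}(2), that specialization commutes with smooth pullbacks), then run the classical double-deformation-space argument on $M^\circ_{X\times\Po/M^\circ_{Y/Z}}$, and finally use the torsor structure (extended homotopy) to pass from the scheme-level normal cones $C$ back to the intrinsic normal cone stacks $\fC$. You also correctly identify how the closed immersions $a$ and $b$ arise as the $t=0$ and $t\ne 0$ slices of $C_{X\times\Po/M^\circ_{Y/Z}}$. In fact you spell out the double deformation analysis somewhat more explicitly than the paper, which simply cites \cite{Ful, KKP, Man, Qu} for that part.

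The one concrete problem is the reduction step. You write \emph{``replace $Z$ by $Q$ and $Y$ by $P\times_Y Q$, and then $Y$ by $P$''}; this does not parse: there is no morphism $Q\to Y$, so $P\times_Y Q$ is undefined, and simply replacing $Y$ by $P$ is not legal because the composite $P\to Y\hookrightarrow Q$ is an immersion but not a \emph{closed} immersion, so the new ``$g$'' is no longer a closed immersion. The paper handles this by enlarging the ambient space for $g$ too: given $X\to Y'\to Y$ and $Y\to Z'\to Z$, it picks $Z''$ smooth over $Z'$ with a Cartesian square making $Y'\to Z''$ a closed immersion (in practice, if $Y'\subset Y\times\PP^N$ one can take $Z''\subset Z'\times\PP^N$), and then works with the closed immersions $X\to Y'\to Z''$. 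After this correct reduction one then needs, exactly as the paper records, the observation that the two squares
$C_{X/C_{Y'/Z''}}\to \fC_{X/\fC_{Y/Z}}$, $C_{X\times\Po/M^\circ_{Y'/Z''}}\to \fC_{X\times\Po/M^\circ_{Y/Z}}$, $C_{X/Z''}\to\fC_{X/Z}$ all fit into a diagram where the vertical arrows are torsors over vector bundles of the \emph{same} rank and both squares are Cartesian; you allude to this (``extended homotopy used to implement the cone quotients'') but do not note the rank equality or Cartesianity, which is what makes the pushforwards $a_*, b_*$ line up with the torsor isomorphisms $(\pi^*)^{-1}$ in Definition \ref{4.1}. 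With these two points repaired, your argument matches the paper's.
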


\begin{proof} Note that the specialization homomorphism for the quasi-projective morphism $ X\to \fC_{Y/Z}$ is defined by Remark \ref{4.3b} (3).
It is evident that we can form a commutative diagram
$$\xymatrix{
X \ar[r]_{f'} \ar[rd]_f & Y' \ar[d] \ar[r]_{g''}  & Z'' \ar[d]\\
 & Y \ar[rd]_g \ar[r]_{g'} & Z' \ar[d]\\
 & & Z
}$$
such that the horizontal arrows are closed immersions, the vertical arrows are smooth and the square is Cartesian. Then we have a {factorization}
$$X\times \Po \lra M^{\circ}_{Y'/Z''}\lra M^{\circ}_{Y/Z}$$
of $X\times \Po \to M^{\circ}_{Y/Z}$ by a closed immersion and a smooth morphism whose fiber over $0\in \Po$ is $X \to C_{Y'/Z''} \to \fC_{Y/Z}$. This induces a commutative diagram
$$\xymatrix{
C_{X/C_{Y'/Z''}} \ar[r]^{} \ar[d] & C_{X\times \Po/M^{\circ}_{Y'/Z''}} \ar[d] & C_{X/Z''} \ar[d] \ar[l]_{}\\
\fC_{X/\fC_{Y/Z}} \ar[r]^a & \fC_{X\times\Po/M^{\circ}_{Y/Z}} & \fC_{X/Z} \ar[l]_b
}$$
of cone stacks. Since the vertical arrows are torsors of vector bundles of the same rank, the two squares are Cartesian. It suffices to prove the lemma for the closed immersions $f':X\to Y'$ and $g'':Y'\to Z''$ since specialization homomorphisms commute with smooth pullbacks by Lemma \ref{4.3} (2). Then the usual arguments  in {\cite{Ful, KKP, Man, Qu}} using the double deformation space $M^{\circ}_{X\times \Po/M^{\circ}_{Y'/Z''}}$ remain valid since all the deformations spaces and the cone stacks are quasi-projective schemes in this case.
\end{proof}

\medskip

\subsection{Virtual pullback for limit intersection theory} \label{S8.5}
In this subsection, we extend the results in \ref{S8.4} to limit intersection theories for $\mathbf{St}^{ga}_\bk$. 

\medskip
We first define the virtual fundamental class of an algebraic stack. 
\begin{defi}\label{3.80}
Let $\cX$ be a \DM stack equipped with a perfect obstruction theory $\phi:E\to \bbL_\cX$. 
For a smooth morphism $t:T\to \cX$ from a quasi-projective scheme $T$, we say $\phi$ \emph{lifts to} 
a perfect obstruction theory $\phi_t:E_T\to \bbL_T$ if $\phi$ and $\phi_t$ 
fit into a commutative diagram of exact triangles
\beq\label{3.81}\xymatrix{
E|_{T}\ar[r] \ar[d]_{\phi} & E_{T}\ar[r]\ar[d]^{\phi_{t}} & \Omega_t\ar[r]\ar@{=}[d] & \\
\bbL_{\cX}|_{T}\ar[r] & \bbL_{T}\ar[r] & \Omega_t\ar[r] &   
}\eeq 
where $\Omega_t=\bbL_t$ is the cotangent bundle of $t$. 

Let $\{x_i:X_i\to \cX, \varphi_i:X_i\to X_{i+1}\}$ be a good system of approximations.
We say the perfect obstruction theory $\phi$ of $\cX$ is \emph{liftable} to the system $\{x_i\}$ if $\phi$ lifts to perfect obstruction theories $\phi_i:E_i\to \bbL_{X_i}$ that fit into 
a commutative diagram
\beq\label{3.85}\xymatrix{
E|_{X_i}\ar[dr] \ar[dd]\ar[rr]^{\phi} && \bbL_\cX|_{X_i}\ar[dr]\ar[dd]\\
& E_i\ar[rr]^(.3){\phi_i} && \bbL_{X_i}\\
E_{i+1}|_{X_i}\ar[ur]\ar[rr]^{\phi_{i+1}} && \bbL_{X_{i+1}}|_{X_i}\ar[ur]    
}\eeq 
\end{defi}

The octahedron axiom of the derived category tells us that $\phi_i$ and $\phi_{i+1}$ fit into 
 a commutative diagram of exact triangles
\beq\label{3.86}\xymatrix{
E_{i+1}|_{X_i}\ar[r] \ar[d]_{\phi_{i+1}} & E_{i}\ar[r]\ar[d]^{\phi_{i}} & \bbL_{\varphi_i}\ar[r]\ar@{=}[d] & \\
\bbL_{X_{i+1}}|_{X_i}\ar[r] & \bbL_{X_i}\ar[r] & \bbL_{\varphi_i}\ar[r] &   
}\eeq 
The relative cotangent complex $\bbL_{\varphi}=[\Omega_{x_{i+1}}|_{X_i}\to \Omega_{x_i}]$ is perfect of amplitude $[-1,0]$
and the lci pullback by $\varphi_i$ is the virtual pullback with perfect obstruction theory $\mathrm{id}:\bbL_{\varphi_i}\to \bbL_{\varphi_i}$. 

By Definition \ref{3.32}, we have the virtual fundamental class 
$$[X_i]\virt \in H_{r+d(x_i)}(X_i)$$
where $r$ is the rank of $E$. 
Moreover, by Theorem \ref{3.34}, we have the equality
$$[X_i]\virt =\varphi_i^![X_{i+1}]\virt$$
and thus  $\{[X_i]\virt\}$ is a class in the inverse limit $\varprojlim_i H_{r+d(x_i)}(X_i)=\cH_r(\cX)$. 
\begin{defi}\label{3.82}
Given a perfect obstruction theory for a \DM stack $\cX$ which is liftable to a good system $\{x_i:X_i\to \cX\}$ of approximations, the virtual fundamental class of $\cX$ is defined as
the limt \beq\label{3.83} [\cX]\virt=\varprojlim_{i} \, [X_i]\virt \in \cH_r(\cX).\eeq
\end{defi}
\begin{exam}\label{7.50}
Let $\cX=[X/G]$ be the quotient stack of a quasi-projective scheme acted on linearly by a linear algebraic group $G$. 
Then we have a good system $x_i:X_i=X\times_GU_i\to [X/G]=\cX$ of approximations by Example \ref{3.4}.
Since $U_i/G$ is a quasi-projective scheme, the projection 
$$\eta_i:X_i=X\times_G U_i\to U_i/G=:B_i$$ is a fibration with fiber $X$.  
Suppose $\phi:E_X\to \bbL_X$ is a perfect obstruction theory of $X$ which is a morphism in the $G$-equivariant derived category
of quasi-coherent sheaves on $X$. Then $\phi$ %canonically 
descends to a perfect obstruction theory $\phi:E\to \bbL_\cX$ and 
 lifts to a perfect obstruction theory
$$\phi_{\eta_i}:E_{\eta_i}\lra \bbL_{\eta_i}.$$
As $B_i$ is smooth, $\phi_{\eta_i}$ induces a perfect obstruction theory $\phi_i$ for $X_i$ by standard arguments \cite{BeFa}.
It is straightforward to see that $\phi$ and $\phi_i$ satisfy all the conditions in Definition \ref{3.80}. We thus
obtain the virtual fundamental class $[\cX]\virt\in \cH_r(\cX)$ of the quasi-projective \DM stack $\cX=[X/G]$. 
%(in the sense of \cite{KreG}).  
\end{exam}

Next we define the virtual pullbacks for limit intersection theories. 
To simplify the discussion, we consider only quasi-projective morphisms (cf. Remark \ref{3.87}). 
\begin{defi}\label{3.84} 
{Let $f:\cX\to \cY$ be a quasi-projective morphism of algebraic stacks in $\mathbf{St}_\bk^{ga}$ equipped with a perfect obstruction theory.}
Let $\{y_i:Y_i\to \cY, \varphi_i:Y_i\to Y_{i+1}\}$ be a good system of approximations and consider the fiber diagram
\beq\label{3.94}\xymatrix{
X_i\ar[rr]^{f_i}\ar[dd]_{x_i}\ar[dr] && Y_i\ar[dd]^(.6){y_i}\ar[dr]^{\varphi_i}\\
& X_{i+1}\ar[rr]^(.4){f_{i+1}}\ar[dl]^{x_{i+1}} && Y_{i+1}\ar[dl]^{y_{i+1}}\\
\cX\ar[rr]_f && \cY
}\eeq
so that $x_i:X_i=Y_i\times_\cY \cX\to \cX$ is a good system of approximations by Lemma \ref{2.78}.
The perfect obstruction theory for $f$ induces a perfect obstruction theory for $f_i$ and hence the virtual pullback $f_i^!$.  
By the commutativity of virtual pullbacks (Theorem \ref{3.34}), we have  
$$\varphi_i^!\circ f_{i+1}^!=f_i^!\circ \varphi_i^!$$
and thus $\{f_i^!\}$ define the \emph{virtual pullback} 
\beq\label{3.89} f^!=\varprojlim_{i}\, f_i^!: \cH_*(\cY)\lra \cH_{*+r}(\cX).\eeq
\end{defi}

\begin{theo}\label{3.90}
For quasi-projective morphisms in $\mathbf{St}_\bk^{ga}$, (1)-(3) in Theorem \ref{2.85} hold for the virtual pullback \eqref{3.89} of the limit intersection theory $\cH$. 
\end{theo}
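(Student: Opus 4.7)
The plan is to reduce all three statements to the corresponding scheme-level results in Theorem~\ref{3.34} by exploiting the inverse limit presentation of the virtual pullback in Definition~\ref{3.84}. In each case one chooses a good system of approximations on the ``largest'' target, pulls it back via Lemma~\ref{2.78} to compatible good systems on the intermediate stacks, verifies the desired identity index by index using Theorem~\ref{3.34}, and then passes to the inverse limit through~\eqref{3.89}.

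For (1), consider a Cartesian square
\[
\xymatrix{
\cX'\ar[r]^{f'}\ar[d]_{q} & \cY'\ar[d]^{p}\\
\cX\ar[r]^f & \cY
}
\]
in $\mathbf{St}_\bk^{ga}$ with $f$ equipped with a perfect obstruction theory (which pulls back to one on $f'$), and with $p$ either projective, smooth, or quasi-projective equipped with its own perfect obstruction theory. I would fix a good system $\{y_i:Y_i\to\cY\}$ for $\cY$ and form the induced good systems $\{Y_i':=Y_i\times_\cY\cY'\to\cY'\}$, $\{X_i:=Y_i\times_\cY\cX\to\cX\}$, $\{X_i':=Y_i\times_\cY\cX'\to\cX'\}$. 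Because smooth base change preserves cotangent complexes, the perfect obstruction theories on $f$ and $f'$ descend to each level, giving scheme-level virtual pullbacks $f_i^!$ and $(f_i')^!$. Theorem~\ref{3.34}(1) provides the desired commutativity with ${p_i}_*$, $p_i^*$, or $p_i^!$ at each $i$, and the inverse limit presentation~\eqref{3.89} transports it to $\cH_*$.

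For (2), I would choose a good system $\{z_i:Z_i\to\cZ\}$ for the target $\cZ$ and induce good systems $Y_i:=Z_i\times_\cZ\cY\to\cY$ and $X_i:=Y_i\times_\cY\cX\to\cX$. Smooth base change applied to~\eqref{2.87} yields a compatible triangle of perfect obstruction theories on $f_i:X_i\to Y_i$, $g_i:Y_i\to Z_i$, and $g_i\circ f_i$ in $\mathbf{QSch}_\bk$, whereupon Theorem~\ref{3.34}(2) gives $f_i^!\circ g_i^!=(g_i\circ f_i)^!$ for every $i$. Passing to the inverse limit establishes $f^!\circ g^!=(g\circ f)^!$ on $\cH_*$. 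Assertion (3) then follows from (2) applied to the structure morphism $\cY\to\Spec\bk$, exactly as in the scheme case: $f^![\cY]\virt=f^!\circ p_\cY^!\mathbf{1}=(p_\cY\circ f)^!\mathbf{1}=p_\cX^!\mathbf{1}=[\cX]\virt$, where $[\cY]\virt=\varprojlim_i[Y_i]\virt$ and $[\cX]\virt=\varprojlim_i[X_i]\virt$ by Definition~\ref{3.82}.

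The principal technical point, and the step that demands the most care, is to verify that pulling back along the smooth morphisms $y_i$, $z_i$ genuinely preserves the hypotheses of Theorem~\ref{3.34}(2), namely the entire compatible triangle of perfect obstruction theories~\eqref{2.87}. This uses that for a Cartesian square of \DM type morphisms with smooth vertical arrows one has $\bbL_{f_i}\cong x_i^*\bbL_f$, together with the fact that exact triangles of perfect complexes are preserved by pullback; choosing the good systems for $\cX$ and $\cY$ as fibre products of the single good system on $\cZ$ ensures that the compatibility is indexed uniformly in $i$. A secondary but routine issue is the independence of~\eqref{3.89} of the chosen good system of approximations, analogous to Proposition~\ref{3.6} and the remark following Definition~\ref{3.10}, which legitimises working with whichever good system is most convenient for the argument at hand.
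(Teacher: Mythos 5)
Your proposal is correct and takes essentially the same approach as the paper, which simply invokes Theorem~\ref{3.34} on the approximating quasi-projective schemes and passes to the inverse limit via~\eqref{3.89}. You have merely spelled out the details — the choice of a good system on the target, the induced systems via Lemma~\ref{2.78}, and the compatibility of pulled-back perfect obstruction theories — that the paper leaves implicit.
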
 
\begin{proof}
By Theorem \ref{3.34}, (1)-(3) of Theorem \ref{2.85} hold for the virtual pullbacks $f_i^!$ in $\mathbf{QSch}_\bk$. 
The theorem follows by taking limits.
\end{proof}

The following also follows directly from Theorem \ref{3.34}. 
\begin{prop}\label{3.91}
Let $f:\cX\to \cY$ be a quasi-projective morphism in  $\mathbf{St}_\bk^{ga}$ equipped with 
a perfect obstruction theory $\phi_f:E_f\to \bbL_f$. Suppose $\cY$ (resp. $\cX$) admits 
a perfect obstruction theory $\phi_\cY:E_\cY\to \bbL_\cY$ (resp. $\phi_\cX:E_\cX\to \bbL_\cX$) 
liftable to a good system $\{y_i:Y_i\to \cY\}$ of approximations (resp. the induced system 
$\{x_i:X_i=\cX\times_\cY Y_i\to \cX\}$). If $\phi_f$ and the induced perfect obstruction theories 
$\phi_{Y_i}:E_{Y_i}\to \bbL_{Y_i}$ and $\phi_{X_i}:E_{X_i}\to \bbL_{X_i}$  
fit into a commutative diagram of exact triangles
{\[\xymatrix{
E_{Y_{i}}|_{X_i}\ar[r]\ar[d]^{\phi_{Y_i}|_{X_i}}  & E_{X_i}\ar[r] \ar[d]^{\phi_{X_i}}& E_{f}|_{X_i}\ar[r]\ar[d]^{\phi_f|_{X_i}}&\\
\bbL_{Y_{i}}|_{X_i}\ar[r]  & \bbL_{X_{i}}\ar[r] & \bbL_{f_{i}}\ar[r]&\\
}\]}
%\[\xymatrix{
%E_{Y_{i}}|_{X_i}\ar[r]  & E_{X_i}\ar[r]& E_{f_i}\ar[r]&\\
%E_{Y_{i+1}}|_{X_i}\ar[r] \ar[u] & E_{X_{i+1}}|_{X_i}\ar[r]\ar[u] & E_{f_{i+1}}|_{X_i}\ar[u]\ar[r]&\\
%}\]
we have $f^![\cY]\virt=[\cX]\virt.$
\end{prop}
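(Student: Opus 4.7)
The plan is to reduce Proposition~\ref{3.91} to the already-established finite-level virtual pullback formula (Theorem~\ref{3.34}(3)) applied to each $f_i:X_i\to Y_i$, and then pass to the inverse limit. By Definition~\ref{3.82}, the virtual fundamental classes are
\[ [\cY]\virt=\varprojlim_i\,[Y_i]\virt\in \cH_*(\cY),\qquad [\cX]\virt=\varprojlim_i\,[X_i]\virt\in \cH_*(\cX),\]
while Definition~\ref{3.84} gives $f^!=\varprojlim_i f_i^!$. So it suffices to verify, for every $i$, the identity $f_i^![Y_i]\virt=[X_i]\virt$ in $H_*(X_i)$, and then to check that these identities are compatible with the transition maps $\varphi_i^!$ of the two inverse systems.

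Fix $i$ and set $p_{Y_i}:Y_i\to \Spec\bk$, $p_{X_i}=p_{Y_i}\circ f_i:X_i\to\Spec \bk$. The given hypothesis provides an exact triangle of obstruction theories $\phi_{Y_i}|_{X_i}\to \phi_{X_i}\to \phi_f|_{X_i}$ compatible with the corresponding triangle of relative cotangent complexes on $X_i$. Since $Y_i$ and $X_i$ are quasi-projective schemes, combining this triangle with the standard exact triangle of cotangent complexes $\bbL_{p_{Y_i}}|_{X_i}\to \bbL_{p_{X_i}}\to \bbL_{f_i}$ shows that $\phi_{X_i}:E_{X_i}\to \bbL_{X_i}$ serves as a perfect obstruction theory for the composition $p_{X_i}=p_{Y_i}\circ f_i$, and that the three perfect obstruction theories $\phi_{Y_i}$, $\phi_{X_i}$, $\phi_{f_i}$ satisfy the hypothesis of Theorem~\ref{3.34}(3) (which in turn invokes Theorem~\ref{2.85}(3)). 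Therefore
\[ f_i^![Y_i]\virt=[X_i]\virt\quad\text{in } H_{*+r}(X_i).\]

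It remains to assemble these identities into the limit. Here the main point to verify---and the only slightly delicate step---is that the system $\{f_i^![Y_i]\virt\}_i$ is indeed a coherent element of $\varprojlim_i H_{*+r+d(x_i)}(X_i)=\cH_{*+r}(\cX)$ representing $f^![\cY]\virt$, and that the system $\{[X_i]\virt\}_i$ representing $[\cX]\virt$ matches it term by term. Compatibility with the transition maps amounts to the commutativity
\[ \varphi_i^!\circ f_{i+1}^!=f_i^!\circ \varphi_i^!\]
which is precisely the commutativity of two virtual pullbacks proved in Theorem~\ref{3.34}(1) for the morphism $f_i$ and the lci pullback $\varphi_i$. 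Applying this commutativity, and using that $\varphi_i^![Y_{i+1}]\virt=[Y_i]\virt$ and $\varphi_i^![X_{i+1}]\virt=[X_i]\virt$ by the definition of the virtual class in the limit, the finite-level equalities $f_i^![Y_i]\virt=[X_i]\virt$ assemble into an equality of inverse systems, which by Definitions~\ref{3.82} and~\ref{3.84} yields the desired identity $f^![\cY]\virt=[\cX]\virt$ in $\cH_{*+r}(\cX)$.
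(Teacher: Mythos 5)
Your proof is correct and follows exactly the route the paper intends: the paper's proof is literally the one-line remark "The following also follows directly from Theorem \ref{3.34}," and you have simply spelled out that reduction---apply Theorem \ref{3.34}(3) to each $f_i\colon X_i\to Y_i$ to get $f_i^![Y_i]^{\mathrm{vir}}=[X_i]^{\mathrm{vir}}$, then observe that the level-wise equalities assemble under the limit defining $f^!$ and the two virtual classes. The only redundancy is in the discussion of coherence of $\{f_i^![Y_i]^{\mathrm{vir}}\}_i$: once you have the component-wise equality $f_i^![Y_i]^{\mathrm{vir}}=[X_i]^{\mathrm{vir}}$, coherence is inherited from $\{[X_i]^{\mathrm{vir}}\}_i$, which is coherent by Definition \ref{3.82}, so the separate appeal to $\varphi_i^!\circ f_{i+1}^!=f_i^!\circ\varphi_i^!$ is already built into Definition \ref{3.84} and not an additional thing to check; but this is harmless, not a gap.
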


A quasi-projective morphism of quasi-projective \DM stacks satisfies the assumptions in Theorem \ref{3.90} and Proposition \ref{3.91}
\begin{exam}
Let $\cX=[X/G]$ and $\cY=[Y/G]$ be quotients of quasi-projective schemes acted on linearly by a linear algebraic group $G$. 
Let $\hat{f}:X\to Y$ be a $G$-equivariant morphism that induces a morphism $f:\cX\to \cY$. 
A $G$-equivariant perfect obstruction theory $\hat{\phi}:\hat{E}\to \bbL_{\hat{f}}$ induces a perfect obstruction theory $\phi:E\to \bbL_f$. If $X$ and $Y$ are equipped with $G$-equivariant perfect obstruction theories that fit into a commutative diagram like \eqref{2.87} with $Z=\Spec\,\bk$, then Theorem \ref{3.90} and Proposition \ref{3.91} apply to $f$ and $\cX$, $\cY$.  
\end{exam}

%The theory of virtual fundamental class and virtual pullback described above is only a simple part of a much more general theory. 
\begin{rema}\label{3.87}
The virtual fundamental class \eqref{3.83} and the virtual pullback \eqref{3.89} can be defined in much more general setting. 
In fact, the virtual fundamental class $[\cX]\virt\in \cH_r(\cX)$ can be defined for any \DM stack with perfect obstruction theory. For the virtual pullback, we only need that $f:\cX\to \cY$ is of \DM type and $\cY$ admits a good system of approximations.
Here's the idea. 

Let $t:T\to \cX$ be a smooth morphism with $T$ a quasi-projective scheme and let $\imath:\fC_\cX\to \cE$ denote an embedding of the intrinsic normal cone of $\cX$ into a vector bundle stack. 
The composition $f_t=f\circ t:T\lra \cX$ 
may not admit a perfect obstruction theory, but we have a morphism
\beq\label{3.40} \fC_{T}\lra \fC_{\cX}|_T\mapright{\imath_T} \cE|_T\eeq
by \cite[2.20]{Man}, where the first arrow fits into the short exact sequence 
\beq\label{3.41} 0\lra \fN_{T/\cX}\lra \fC_{T}\mapright{h} \fC_{\cX}|_T\lra 0\eeq
of cone stacks by \cite[Remark 2.24]{Man}. As the last morphism $\fC_{T}\to \fC_{\cX}|_T$ is $\A^1$-equivariant smooth and surjective, the smooth pullback 
$$h^*:\cH_*(\fC_{\cX}) \lra \cH_{*+d(h)}(\fC_T)$$ 
is an isomorphism. Then we define
$$[\cX]\virt_t:=0^*_{t^*\cE}\circ (\imath_T)_*\circ (h^*)^{-1}\circ \mathrm{sp}_{T/\Spec \bk} (\mathbf{1})\in H_{r+d(t)}(T).$$
The virtual fundamental class of $\cX$ is defined as the limit
$$[\cX]\virt:=\varprojlim_{t:T\to \cX} \, [\cX]\virt_t\in \cH(\cX).$$
The virtual pullback is similarly defined. This general theory involves a lot of things to be checked and takes many pages. 
The details will appear in the second author's doctoral dissertation \cite{Park}. 
\end{rema}

\bigskip

\section{Cosection localization of virtual fundamental classes}\label{S9}

In \cite{KLc, KLk, KLq}, it was proved that the virtual fundamental class $[\cX]\virt$ for {Chow theory, K-theory} and Borel-Moore homology is localized to the zero locus of a cosection $\sigma$ when the obstruction sheaf $Ob_\cX=h^1(E^\vee)$ of the perfect obstruction theory $\phi:E\to \bbL_\cX$ admits a homomorphism $\sigma:Ob_\cX\to \sO_\cX$. This cosection localization turned out to be quite useful and led to many remarkable results (cf. \cite{CK, CLp, CLL, CLLL, Clad, FJR, GS1, GS2, HLQ, JiTh, KL1, KL2, KoTh, MPT, PT}).  

In this section, we generalize the cosection localization to arbitrary intersection theory on $\mathbf{QSch}_\bk$ and to limit intersection theories on the category $\mathbf{DM}_\bk^{ga}$ of \DM stacks in $\mathbf{St}^{ga}_\bk$. %, admitting good systems of approximations. 

\medskip
Throughout this section, we fix an intersection theory $H_*$ for $\mathbf{QSch}_\bk$ (Definition \ref{2.1}) whose limit theory on $\mathbf{St}_\bk^{ga}$ is denoted by $\cH_*$ as before.  
By \eqref{4.30}, the projective pushforward gives us an isomorphism 
\beq\label{7.34}H_*(X_\redd)\cong H_*(X)\eeq for any quasi-projective scheme because all projective morphisms $f:Y\to X$ from smooth $Y$ factor through the reduced scheme $X_\redd$ of $X$. 
\begin{lemm}\label{4.31} For an algebraic stack $\cX$ with a good system $\{x_i:X_i\to \cX\}$ of approximations, the projective pushforward gives us an isomorphism 
$$\cH_*(\cX_\redd)\cong \cH_*(\cX).$$
\end{lemm}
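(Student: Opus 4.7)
The plan is to reduce the stacky statement to the scheme statement \eqref{7.34} by pulling back the good system of approximations along the closed immersion $\iota : \cX_\redd \hookrightarrow \cX$, and then taking an inverse limit of the resulting isomorphisms.

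First I would observe that for each $i$, the scheme $X_i \times_\cX \cX_\redd$ coincides with $(X_i)_\redd$. Indeed, since $x_i$ is smooth, the base change $X_i \times_\cX \cX_\redd \to X_i$ of the nil-immersion $\iota$ is a closed immersion of the same underlying topological space; smooth morphisms preserve reducedness (they are flat with geometrically reduced fibres), so $X_i \times_\cX \cX_\redd$ is reduced and thus agrees with $(X_i)_\redd$ as a closed subscheme of $X_i$. In particular the induced morphism $(x_i)_\redd : (X_i)_\redd \to \cX_\redd$ is smooth of the same relative dimension $d(x_i)$.

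Next I would verify that $\{(X_i)_\redd \to \cX_\redd\}$ together with the restrictions of $\varphi_i$ forms a good system of approximations for $\cX_\redd$. Conditions (1) and (2) of Definition \ref{3.1} are immediate from the first step. For (3) and (4), any quasi-projective morphism $S \to \cX_\redd$ (respectively $\cY \to \cX_\redd$) composes with the quasi-projective $\iota$ to give a quasi-projective morphism to $\cX$, and the identification $(-) \times_{\cX_\redd}(X_i)_\redd = (-) \times_\cX X_i$ reduces the required isomorphism/surjectivity for $\cX_\redd$ to the same property for the original system $\{X_i \to \cX\}$. By Proposition \ref{3.6} we therefore have
\[\cH_*(\cX_\redd) \;=\; \varprojlim_i H_{*+d(x_i)}\bigl((X_i)_\redd\bigr), \qquad \cH_*(\cX) \;=\; \varprojlim_i H_{*+d(x_i)}(X_i).\]

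To finish, let $\iota_i : (X_i)_\redd \hookrightarrow X_i$ denote the induced closed immersions. From the construction \eqref{2.7} of projective pushforward in the limit theory together with the Cartesian square $(X_i)_\redd = X_i \times_\cX \cX_\redd$, the pushforward $\iota_*$ is exactly the inverse limit of the pushforwards $(\iota_i)_*$; compatibility with the lci transition maps $\varphi_i^*$ is axiom (4) of Definition \ref{2.1} applied to the Cartesian square relating $\iota_i$, $\iota_{i+1}$ and $\varphi_i$. Since each $(\iota_i)_*$ is an isomorphism by \eqref{7.34}, we obtain a levelwise isomorphism of inverse systems, and hence an isomorphism on inverse limits.

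The only step requiring genuine care is the verification that $\{(X_i)_\redd \to \cX_\redd\}$ is again a good system of approximations; the rest is formal manipulation of limits and pushforwards. Once the identification $(X_i)_\redd = X_i \times_\cX \cX_\redd$ is in hand, all the axioms transfer verbatim.
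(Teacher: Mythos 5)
Your proposal is correct and follows essentially the same route as the paper: identify $X_i \times_\cX \cX_\redd$ with $(X_i)_\redd$ using smoothness of $x_i$, conclude that $\{(X_i)_\redd \to \cX_\redd\}$ is a good system of approximations, and pass to the limit of the level-wise isomorphisms \eqref{7.34}. The only cosmetic difference is that where you verify the good-system axioms by hand, the paper simply invokes Lemma \ref{2.78} applied to the quasi-projective (closed) immersion $\cX_\redd \hookrightarrow \cX$; your verification is precisely the content of that lemma.
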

\begin{proof}
%Given a smooth morphism $X\to \cX$ from $X\in \mathrm{QSch}_\bk$,  we have 
Because $x_i$ is smooth, $X_i\times_\cX \cX_\redd=(X_i)_\redd$ and the induced smooth morphisms
$(X_i)_\redd\to \cX_\redd$ is a good system of approximations by Lemma \ref{2.78}.  
By taking the limit, the isomorphisms 
$H_*((X_i)_\redd)\cong H_*({X_i})$ for schemes induce the isomorphism in the lemma.
\end{proof}

\subsection{Cosection localization for quasi-projective schemes}\label{S9.1}

The goal of this subsection is to prove the following generalization of \cite[Theorem 1.1]{KLc} and \cite[Theorem 1.1]{KLk}. 
\begin{theo}\label{3.42}
Let $X$ be a quasi-projective scheme equipped with a perfect obstruction theory $\phi:E\to \bbL_X$ and a cosection $\sigma:Ob_X=h^1(E^\vee)\to \sO_X$ whose zero locus is denoted by $X(\sigma)$. For any intersection theory $H_*$ for $\mathbf{QSch}_\bk$, there is a cosection localized virtual fundamental class $$[X]_\loc\virt \in H_r(X(\sigma))$$ such that
$\imath_*[X]\virt_\loc=[X]\virt\in H_r(X)$ where $\imath$ denotes the inclusion of $X(\sigma)$ into $X$ and $r$ is the rank of $E$. Moreover $[X]\virt_\loc$ is deformation invariant. 
\end{theo}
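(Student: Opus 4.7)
The plan is to follow the Kiem--Li blueprint \cite{KLc, KLk}, verifying at every step that the construction uses only the operations (projective pushforward, smooth pullback, refined Gysin pullback, intersection with divisors, extended homotopy, projective bundle formula, and Gysin maps for vector bundle stacks) that are axioms of an intersection theory in the sense of Definition \ref{2.1}. After fixing a local presentation $E^\vee=[E_0\to E_1]$ of the dual obstruction theory so that $\cE=[E_1/E_0]$, the cosection $\sigma:Ob_X\to \sO_X$ extends canonically to $\sigma:E_1\to \sO_X$ vanishing on the image of $E_0$, and determines a closed substack
$$\cE(\sigma):=[\ker(\sigma)|_{X-X(\sigma)}/E_0]\,\cup\,\cE|_{X(\sigma)}\,\subset\,\cE$$
which one checks is globally well defined independently of the local presentation.

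The geometric cornerstone, the \emph{cone reduction} $\imath_{X/Y}(\fC_X)\subset \cE(\sigma)$, is a scheme-theoretic assertion proved in \cite[Theorem 5.1]{KLc} that is independent of the intersection theory and can be invoked directly. The main technical step is then to construct a cosection-localized Gysin map
$$0^!_{\cE,\loc}:H_*(\cE(\sigma))\lra H_{*+r}(X(\sigma))$$
satisfying the key compatibility $\imath_*\circ 0^!_{\cE,\loc}=0^!_\cE\circ \jmath_*$, where $\imath:X(\sigma)\hookrightarrow X$ and $\jmath:\cE(\sigma)\hookrightarrow \cE$ are the inclusions. Following \cite{KLc}, I would blow up $\cE$ along its zero section $X$, use the exceptional divisor $\PP(\cE)$ together with the projective bundle formula (PB) and intersection with Cartier divisors, and on the complement $X-X(\sigma)$ exploit the cosection as a surjection $E_1|_{X-X(\sigma)}\twoheadrightarrow \sO$ to replace $\cE$ by the rank-$(r-1)$ kernel bundle stack. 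Alternatively, the more operator-theoretic deformation approach of \cite[\S 6]{KLk} can be mimicked. Once $0^!_{\cE,\loc}$ is in hand, the class
$$[X]\virt_\loc:=0^!_{\cE,\loc}\circ (\imath_{X/Y})_*\circ \mathrm{sp}_{X/Y}(\mathbf{1})\in H_r(X(\sigma))$$
is well defined by the cone reduction, and the identity $\imath_*[X]\virt_\loc=[X]\virt$ is immediate from the compatibility of $0^!_{\cE,\loc}$ with $0^!_\cE$.

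The main obstacle will be the construction of $0^!_{\cE,\loc}$ itself. In \cite{KLc} several arguments are carried out at the level of cycles, so one must recast them using only the axiomatic operations of Definition \ref{2.1}, supplemented by the formal group law \eqref{2.74} to handle the first Chern class of tensor products of line bundles that arises in the blowup computation. Once this axiomatization is achieved, independence of the local presentation, independence of the chosen extension $\sigma:E_1\to \sO_X$, and the compatibility $\imath_*\circ 0^!_{\cE,\loc}=0^!_\cE\circ \jmath_*$ follow from standard arguments using functoriality of refined Gysin pullbacks, excision (Exc), extended homotopy (EH), and (PB). Deformation invariance of $[X]\virt_\loc$ then reduces to the commutation of $0^!_{\cE,\loc}$ with specialization homomorphisms for flat families, which in turn follows from the operator-theoretic character of its construction and the virtual pullback formula (Theorem \ref{3.34}) applied fiberwise.
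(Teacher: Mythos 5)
Your proposal diverges from the paper's proof at the central step and leaves a genuine gap. You propose to construct the cosection-localized Gysin map $0^!_{\cE,\loc}$ directly for an arbitrary intersection theory $H_*$ by recasting the arguments of \cite{KLc} (or alternatively \cite{KLk}) using only the axioms of Definition \ref{2.1} together with the formal group law, and you acknowledge this axiomatization as "the main obstacle" without resolving it. The difficulty is real: the construction requires a \emph{blowup exact sequence} to decompose a class on $\cE(\sigma)$ into a part supported over $X(\sigma)$ and a part coming from a blowup, and no such sequence is among the axioms of an intersection theory in Definition \ref{2.1}. In the paper, Lemma \ref{4.45} constructs $0^!_{\cE,\loc}$ \emph{only} for $H_*=\Omega_*$, precisely because the needed blowup sequence (Lemma \ref{4.55}, due to Vishik) is a theorem specific to algebraic cobordism. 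Having the localized class in $\Omega_r(X(\sigma))$, the paper then defines $[X]\virt_{H,\loc}\in H_r(X(\sigma))$ as its image under the unique natural homomorphism $\Omega_*\to H_*$ furnished by the universality of algebraic cobordism (Theorem \ref{2.2}); since that homomorphism preserves all operations, the pushforward identity $\imath_*[X]\virt_\loc=[X]\virt$ and deformation invariance (Proposition \ref{4.59}) then transfer automatically. This two-step strategy --- construct in $\Omega_*$, push down by universality --- is the crux of the proof and is entirely absent from your proposal.

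There is also a concrete discrepancy in the blowup you envisage. You propose blowing up $\cE$ along its zero section $X$ and exploiting $\PP(\cE)$ with (PB); the paper instead blows up $X$ along $X(\sigma)$, obtaining $\rho:\tX\to X$ with exceptional divisor $D$, so that the cosection lifts to a surjection $\tE_1=\rho^*E_1\twoheadrightarrow\sO_{\tX}(-D)$ with kernel $E'$. The Vishik sequence then writes any $\xi\in\Omega_*(E_1(\sigma))$ as $\tilde\rho_*\zeta+\jmath_*\eta$, and one sets $0^!_{E_1,\sigma}(\xi)=0^!_{E_1|_{X(\sigma)}}(\eta)+\rho'_*\bigl((-D)\cdot 0^!_{E'}\zeta\bigr)$, where $(-D)\cdot$ is built from the formal group law as in Definition \ref{4.52}. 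Without the excursion through $\Omega_*$ you have no analogue of that decomposition for general $H_*$, and the subsequent well-definedness, compatibility $\imath_*\circ 0^!_{\cE,\loc}=0^!_\cE\circ\jmath_*$, and independence-of-presentation arguments (Proposition \ref{6.20}) cannot be carried out.
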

Here $X(\sigma)$ is the closed subscheme defined by the ideal $\sigma(Ob_X)\subset \sO_X$. 

\begin{proof}
Since $X$ is quasi-projective, the dual $E^\vee$ of $E$ is represented by a two-term complex 
$[E_0\to E_1]$ of locally free sheaves on $X$, so that we have an exact sequence
$$E_0\lra E_1\lra h^1(E^\vee)=Ob_X\lra 0$$
and $\cE:=h^1/h^0(E^\vee)=[E_1/E_0]$. Let 
\beq\label{4.46} 
E_1(\sigma):=\ker (E_1\to Ob_X\mapright{\sigma} \sO_X), \quad \cE(\sigma):=[E_1(\sigma)/E_0].\eeq

By \cite[Proposition 4.3]{KLc}, the {intrinsic} normal cone $\fC_X$ has its support in $\cE(\sigma)$ 
and we have a commutative diagram
\beq\label{4.43} \xymatrix{
\fC_X\ar[r] & \cE\\
(\fC_X)_\redd \ar[r]\ar[u] & \cE(\sigma)\ar[u]
}\eeq
of closed immersions. Denoting the specialization homomorphism \eqref{4.21} for $X\to \Spec\bk$ by $\mathrm{sp}$, we have a commutative diagram 
\beq\label{4.44}\xymatrix{
H_*(\Spec \bk)\ar[r]^{\mathrm{sp}} \ar@{=}[d]& \cH_*(\fC_X)\ar[r]  & \cH_*(\cE)\ar[r]^{0^!_\cE} & H_{*+r}(X)\\
H_*(\Spec \bk)\ar[r]& \cH_*((\fC_X)_\redd)\ar[u]^\cong \ar[r] & \cH_*(\cE(\sigma))\ar[u] \ar@{.>}[r]^{0^!_{\cE,\loc}} & H_{*+r}(X(\sigma))\ar[u]_{\imath_*}
}\eeq
whose top row sends $\mathbf{1}$ to the virtual fundamental class $[X]\virt=[X]\virt_H$. 
All vertical arrows are projective pushforwards. 

Lemma \ref{4.45} below says that when $H_*=\Omega_*$ is the algebraic cobordism theory in \cite{LeMo}, there is a homomorphism $0^!_{\cE,\loc}$ which completes the last square. 
Hence for $H_*=\Omega_*$, we can define the \emph{cosection localized virtual fundamental class} $[X]\virt_{\Omega,\loc}$ as the image of $\mathbf{1}$ by the bottom row and the theorem is proved in this case. 

By Theorem \ref{2.2}, the algebraic cobordism theory is universal in the sense that for any intersection theory $H_*$ on $\mathbf{QSch}_\bk$ and a quasi-projective scheme $X$, there is a unique $\Omega_*(X)\to H_*(X)$ that preserves all the  operations in an intersection theory.  Hence the top row of  \eqref{4.44} fits into a commutative 
\beq\label{4.49}\xymatrix{
\Omega_*(\Spec \bk)\ar[r]^{\mathrm{sp}}\ar[d] \ar[d]& \Omega_*(\fC_X)\ar[r]\ar[d]  & \Omega_*(\cE)\ar[r]^{0^!_\cE} \ar[d]& \Omega_{*+r}(X)\ar[d]\\
H_*(\Spec \bk)\ar[r]^{\mathrm{sp}}& \cH_*(\fC_X)\ar[r] & \cH_*(\cE)\ar[r]^{0^!_{\cE}} & H_{*+r}(X)
}\eeq
and hence the image of the virtual fundamental class of $X$ in $\Omega_r(X)$ by the last vertical arrow is the virtual fundamental class $[X]\virt_H\in H_r(X)$. Then the image $[X]\virt_{H,\loc}\in H_r(X(\sigma))$ of $[X]\virt_{\Omega,\loc}\in \Omega_r(X(\sigma))$ by the universal homomorphism $\Omega_r(X(\sigma))\to H_r(X(\sigma))$ is the desired cosection localized virtual fundamental class for $H_*$. See Proposition \ref{4.59} for the deformation invariance. 
This completes our proof. 
\end{proof}

For the construction below, we will need the intersection with $-D$ for an effective Cartier divisor $D$. 
Let $F_H(u,v)\in H_*(\Spec\,\bk)[\![u,v]\!]$ be the formal group law \eqref{2.74} for $H_*$.
\begin{defi} \label{4.52} Let $\jmath:D\hookrightarrow X$ be an effective Cartier divisor on a quasi-projective scheme $X$ and let $L=\sO_X(D)$. 
By using the formal inverse $u\,g(u)$ in \eqref{4.51} of $u$, the \emph{intersection with $-D$}  is defined by
\beq\label{4.57}(-D)\cdot = \jmath^!\circ g(c_1(L)): H_*(X)\lra H_{*-1}(D).\eeq
\end{defi}
We will also use the following fact.
\begin{lemm} \label{4.55} \cite[Lemma 7.9]{Vish}
Let \[\xymatrix{ X\ar[r]^{\imath'} \ar[d]_{f'} & Y\ar[d]^f\\  
Z\ar[r]^\imath & W }\] 
be a Cartesian square of quasi-projective schemes where $\imath$ is a closed immersion and $f$ is projective. 
If $f$ is an isomorphism over $W-Z$, we have an exact sequence
$$\Omega_*(X)\mapright{(\imath'_*,-f'_*)} \Omega_*(Y)\oplus \Omega_*(Z)\mapright{f_*+\imath_*} \Omega_*(W)\lra 0.$$
\end{lemm}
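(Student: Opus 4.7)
I would arrange the excision sequences of $Z\hookrightarrow W$ and $X\hookrightarrow Y$ into the commutative diagram
\[\xymatrix{
\Omega_*(X) \ar[r]^{\imath'_*} \ar[d]_{f'_*} & \Omega_*(Y) \ar[r]^{\jmath'^*} \ar[d]^{f_*} & \Omega_*(Y-X) \ar[r] \ar[d]^{\cong} & 0 \\
\Omega_*(Z) \ar[r]^{\imath_*} & \Omega_*(W) \ar[r]^{\jmath^*} & \Omega_*(W-Z) \ar[r] & 0
}\]
whose rows are exact by the excision axiom (Exc) for $\Omega_*$ on $\mathbf{QSch}_\bk$ and whose right vertical arrow is an isomorphism because $f|_{Y-X}$ is. The composition $(f_*+\imath_*)\circ(\imath'_*,-f'_*) = f_*\imath'_* - \imath_* f'_* = 0$ follows directly from the Cartesian square, so only the surjectivity of $f_*+\imath_*$ and exactness at the middle remain.

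Surjectivity is a standard chase: given $\xi\in\Omega_*(W)$, transport $\jmath^*\xi$ through the right vertical iso and lift via the top excision to some $\alpha\in\Omega_*(Y)$. Then $\jmath^*(\xi-f_*\alpha)=0$ by commutativity, so $\xi-f_*\alpha=\imath_*\beta$ for some $\beta$ by the bottom excision, and $(\alpha,\beta)$ maps to $\xi$. For exactness in the middle, suppose $f_*\alpha+\imath_*\beta=0$. Applying $\jmath^*$ and transporting through the right iso yields $\jmath'^*\alpha=0$, so $\alpha=\imath'_*\gamma$ for some $\gamma\in\Omega_*(X)$. Substituting back gives $\imath_*(f'_*\gamma+\beta)=0$, i.e.\ the residual class $\eta := f'_*\gamma+\beta$ lies in $\ker(\imath_*)$. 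If I could produce $\epsilon\in \ker(\imath'_*)$ with $f'_*\epsilon=\eta$, then $(\gamma-\epsilon,-f'_*(\gamma-\epsilon))=(\gamma-\epsilon,\beta)$ and $\imath'_*(\gamma-\epsilon)=\alpha$, providing the required preimage of $(\alpha,\beta)$.

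The remaining surjectivity of $f'_*:\ker(\imath'_*)\to\ker(\imath_*)$ is the crux and is \emph{not} a formal consequence of excision. My plan is to exploit the cobordism-theoretic structure of $\Omega_*$: by \cite{LeMo, LePa}, any $\eta \in \ker(\imath_*)$ is represented by a $\ZZ$-linear combination of double point degeneration relations coming from projective families $\cV \to \PP^1$ over $W$, with smooth generic fiber and double point fiber $A\cup_D B$. Since $f:Y\to W$ is projective and an isomorphism away from $Z$, I would take the strict transform of each $\cV$ under $f$, apply resolution of singularities (available in characteristic zero, as assumed globally in \S1) to desingularize, and verify that the lifted families remain double point degenerations supported over $X$. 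Their pushforwards along $f'$ then recover $\eta$ by construction. The technical heart -- preserving the double point structure under strict transform and resolution -- is the main obstacle; one natural reduction is via Chow's lemma and Hironaka flattening to the case where $f$ is the blow-up along a regularly embedded center, where the explicit blow-up formula for $\Omega_*$ in \cite[\S7]{LeMo} makes the matching transparent. Characteristic zero is used essentially at this resolution step, consistent with the paper's conventions.
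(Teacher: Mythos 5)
The paper does not prove this lemma; it simply cites Vishik's Lemma~7.9, so there is no internal argument to compare against. Your diagram chase is correct as far as it goes: vanishing of the composite, surjectivity of $f_*+\imath_*$, and the reduction of middle exactness to the claim that $f'_*\colon\ker(\imath'_*)\to\ker(\imath_*)$ is onto, are all valid consequences of the excision axiom together with the identification $Y-X\cong W-Z$. You are also right that this last surjectivity is where the real content lies.

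The gap is that the crux surjectivity is not proved; what you give is a plan, and it does not close. Two concrete problems. First, your description of $\ker(\imath_*)$ conflates the class with the relation that kills its pushforward: an element $\eta\in\ker(\imath_*)$ is represented by a cycle $\tilde\eta=\sum n_k[V_k\to Z]$ over $Z$, and the hypothesis $\imath_*\eta=0$ only says that $\imath_*\tilde\eta$ lies in the relation submodule $\cR_*(W)\subset\cZ_*(W)$, i.e.\ is a combination of double point degeneration relations with families over $W$. To produce the desired $\epsilon\in\ker(\imath'_*)$ you must simultaneously lift $\tilde\eta$ to a cycle over $X$ and lift the bounding DPD families from $\PP^1\times W$ to $\PP^1\times Y$ so that the lifted cycle is killed there; these are two coupled lifting problems, and nothing in the sketch guarantees compatibility. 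Second, the proposed geometric mechanism — strict transform along $f$ followed by resolution to restore a double point degeneration — is not a construction one can invoke in a line: strict transforms of DPD families need not remain DPD families, resolution introduces new components whose pushforward contributions must be tracked, and the reduction ``via Chow's lemma and Hironaka flattening to blow-ups along regularly embedded centers'' is not a formal reduction for an arbitrary projective $f$ that is an isomorphism off $Z$ (moreover the blow-up formula in \cite[\S7]{LeMo} concerns blow-ups of smooth schemes along smooth centers, not regular embeddings in possibly singular quasi-projective schemes). In effect your argument reduces the lemma to a statement that is essentially equivalent to it and then gestures at Vishik's proof rather than reproducing it. The honest thing to write, matching what the paper does, is to run the diagram chase to isolate the surjectivity of $f'_*$ on kernels and then cite \cite[Lemma 7.9]{Vish} (or reproduce its proof) for that step.
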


\begin{lemm}\label{4.45} There is a homomorphism $0^!_{\cE,\loc}:\Omega_*(\cE(\sigma))\to \Omega_*(X(\sigma))$ that completes the last square of the commutative diagram \eqref{4.44} when $H_*=\Omega_*$ is the algebraic cobordism in \cite{LeMo}.
\end{lemm}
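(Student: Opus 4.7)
The plan is to reduce the construction to a Gysin map for a genuine vector bundle stack on the blowup of $X$ along $Z := X(\sigma)$, and then descend to $\Omega_*(Z)$ using the blowup exact sequence of Lemma \ref{4.55}.

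Let $\pi : \tilde{X} \to X$ be the blowup along $Z$, with exceptional divisor $\jmath : D \hookrightarrow \tilde{X}$ and restriction $p := \pi|_D : D \to Z$. Since the image of $\sigma : E_1 \to \sO_X$ is the ideal $I_Z$, the pullback $\pi^*\sigma$ takes values in $\pi^{-1}I_Z \cdot \sO_{\tilde{X}} = \sO_{\tilde{X}}(-D)$, producing a surjection $\tilde{\sigma} : \pi^*E_1 \twoheadrightarrow \sO_{\tilde{X}}(-D)$ whose kernel $\tilde{E}_1(\sigma)$ is locally free of rank $r_1-1$. Setting $\tilde{\cE}(\sigma) := [\tilde{E}_1(\sigma)/\pi^*E_0]$ yields an honest vector bundle stack on $\tilde{X}$ (of rank $r+1$ in the paper's convention) with a standard Gysin map $0^!_{\tilde{\cE}(\sigma)}$. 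Moreover, the inclusion $E_1(\sigma) \hookrightarrow E_1$ pulls back to a map $\pi^*E_1(\sigma) \to \pi^*E_1$ that factors through $\tilde{E}_1(\sigma)$ (because $\tilde{\sigma}$ annihilates its image), giving a morphism $q : \pi^*\cE(\sigma) \to \tilde{\cE}(\sigma)$ of cone stacks that is an isomorphism over $\tilde{X} - D$.

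Apply Lemma \ref{4.55} to the projective map $\bar{\pi} : \pi^*\cE(\sigma) \to \cE(\sigma)$ together with the closed immersion $\cE|_Z = \cE(\sigma)|_Z \hookrightarrow \cE(\sigma)$, noting that $\bar{\pi}$ is an isomorphism over $\cE(\sigma) - \cE|_Z$. This yields an exact sequence
\[
\Omega_*(\pi^*\cE(\sigma)|_D) \lra \Omega_*(\pi^*\cE(\sigma)) \oplus \Omega_*(\cE|_Z) \lra \Omega_*(\cE(\sigma)) \lra 0.
\]
I would then define $0^!_{\cE,\loc}$ as the descent of a pair of maps $(\Phi_1, \Phi_2)$ into $\Omega_{*+r}(Z)$, where $\Phi_2 := 0^!_{\cE|_Z}$ is the ordinary Gysin for the rank-$r$ vector bundle stack $\cE|_Z$ on $Z$ and $\Phi_1$ is the composition
\[
\Omega_*(\pi^*\cE(\sigma)) \mapright{q_*} \Omega_*(\tilde{\cE}(\sigma)) \mapright{0^!_{\tilde{\cE}(\sigma)}} \Omega_{*+r+1}(\tilde{X}) \mapright{(-D)\cdot} \Omega_{*+r}(D) \mapright{p_*} \Omega_{*+r}(Z),
\]
using the intersection with $-D$ from Definition \ref{4.52} and the projective pushforward $p_*$.

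The main obstacle is verifying that $\Phi_1$ and $\Phi_2$ agree on the common image from $\Omega_*(\pi^*\cE(\sigma)|_D) = \Omega_*(D \times_Z \cE|_Z)$, which ensures that $(\Phi_1, \Phi_2)$ descends uniquely through the blowup sequence. This reduces to a local computation over $D$, where $q|_D$ is the surjection onto $\tilde{E}_1(\sigma)|_D = \ker(\tilde{\sigma}|_D : p^*E_1|_Z \twoheadrightarrow N^\vee_{D/\tilde{X}})$, and both sides evaluate to the standard Gysin for $\cE|_Z$ by the excess intersection formula applied to the codimension-one subbundle $\tilde{E}_1(\sigma)|_D \subset p^*E_1|_Z$ with excess line bundle $N^\vee_{D/\tilde{X}}$. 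Once $0^!_{\cE,\loc}$ is defined, the commutativity of the last square of \eqref{4.44} follows by chasing: the excess intersection identity $0^!_{\pi^*\cE} = c_1(\sO_{\tilde{X}}(-D)) \cdot 0^!_{\tilde{\cE}(\sigma)}$ on classes from $\tilde{\cE}(\sigma) \hookrightarrow \pi^*\cE$ combined with $\jmath_* \circ ((-D)\,\cdot\,) = c_1(\sO_{\tilde{X}}(-D))$ and $\pi_* \jmath_* = \imath_* p_*$ matches $\imath_* \circ 0^!_{\cE,\loc}$ with $0^!_{\cE} \circ j_*$, completing the construction.
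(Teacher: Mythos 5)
Your overall strategy is the right one and matches the paper's: blow up $X$ along $Z=X(\sigma)$, lift the cosection to a surjection onto $\sO_{\tilde X}(-D)$ whose kernel is an honest subbundle, use the blowup exact sequence of Lemma~\ref{4.55}, and feed in the intersection with $-D$ from Definition~\ref{4.52}. However, the construction of $\Phi_1$ breaks at the map $q$. You assert a morphism $q : \pi^*\cE(\sigma) \to \tcE(\sigma)$ and push forward along it, but the inclusion goes the other way. Here $\pi^*\cE(\sigma)$, as the object appearing in the Cartesian square for Lemma~\ref{4.55}, must be the scheme-theoretic pullback $\cE(\sigma)\times_X \tilde X$. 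Over $D$ this is the \emph{full} vector bundle stack $\pi^*\cE|_D$ (since $E_1(\sigma)|_Z = E_1|_Z$), whereas $\tcE(\sigma)|_D$ is the corank-one substack $[\ker(\tilde\sigma|_D)/\pi^*E_0|_D]$. Concretely, with $\pi^*\sigma = t\cdot\tilde\sigma$ for a local equation $t$ of $D$, the pullback cone is $V(t\tilde\sigma)=V(t)\cup V(\tilde\sigma)$, which properly contains $V(\tilde\sigma)=\tilde E_1(\sigma)$. So the natural morphism is the closed immersion $\tcE(\sigma)\hookrightarrow \pi^*\cE(\sigma)$, and there is no projective morphism $q$ in your stated direction along which $q_*$ makes sense. (Your parenthetical ``$\tilde\sigma$ annihilates its image'' is a statement about the pulled-back \emph{sheaf} $\pi^*\ker\sigma_X$, whose total space is a different object from $\cE(\sigma)\times_X\tilde X$; only the latter sits in the Cartesian square required by Lemma~\ref{4.55}.)

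The paper sidesteps this by never forming $\pi^*E_1(\sigma)$ at all: after reducing to the scheme square \eqref{4.47} via the $E_0$-torsor $E_1 \to \cE$, it applies Lemma~\ref{4.55} directly with $Y = E'$ (your $\tilde E_1(\sigma)$), $W = E_1(\sigma)$, $Z = E_1|_{X(\sigma)}$, using the projective map $\tilde\rho : E'\to E_1(\sigma)$ given by $E'\hookrightarrow \pi^*E_1 \to E_1$, which factors through $E_1(\sigma)$ and is an isomorphism over the complement of $Z$. The $\zeta$-component of a class in $\Omega_*(E_1(\sigma))$ then already lives on the honest vector bundle $E'$, and the localized Gysin map is $\xi\mapsto 0^!_{E_1|_{X(\sigma)}}(\eta) + \rho'_*\bigl((-D)\cdot 0^!_{E'}\zeta\bigr)$ for any decomposition $\xi = \tilde\rho_*\zeta + \jmath_*\eta$. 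To repair your argument, replace your Cartesian square with the one having $Y=\tcE(\sigma)$ (or its scheme avatar $E'$), drop $q$ entirely, and define $\Phi_1 = \rho'_*\circ((-D)\cdot)\circ 0^!_{E'}$; your well-definedness check then becomes the excess-intersection computation the paper carries out. You should also make explicit the reduction of Lemma~\ref{4.55} from the Artin stacks $\cE(\sigma)$, $\cE|_Z$ to the schemes $E_1(\sigma)$, $E_1|_Z$ via the torsor $E_1\to\cE$, since the lemma is only stated for quasi-projective schemes.
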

\begin{proof} 
By the definition of $0^!_\cE$, the last square in \eqref{4.44} is the commutative %diagram
\beq\label{4.47}\xymatrix{
\Omega_*(\cE)\ar[r]^{\cong} & \Omega_{*+r_0}(E_1)\ar[r]^{0^!_{E_1}} & \Omega_{*+r_0-r_1}(X)\\
\Omega_*(\cE(\sigma))\ar[r]^{\cong} \ar[u]& \Omega_{{*+r_0}}(E_1(\sigma))\ar[u] \ar@{.>}[r]^{0^!_{E_1,\loc}} & \Omega_{{*+r_0-r_1}}(X(\sigma))\ar[u]_{\imath_*}
}\eeq
where $r_i$ is the rank of $E_i$ for $i=0,1$ so that $r=r_0-r_1$. 
Hence it suffices to complete the last square in \eqref{4.47}. 

Let $\rho:\tX\to X$ be the blowup of $X$ along $X(\sigma)$ and $\rho':D\to X(\sigma)$ be the restriction of $\rho$ to the exceptional divisor $D$. Then the cosection $$E_1\twoheadrightarrow Ob_X\mapright{\sigma} \sO_X$$ lifts to a surjective homomorphism 
$$\tE_1=\rho^*E_1\twoheadrightarrow \sO_{\tX}(-D)\subset \sO_{\tX}.$$
Let $E'$ be its kernel. 

Note that $E_1(\sigma)=E_1|_{X(\sigma)}\cup \ker(\sigma:E_1|_{X-X(\sigma)}\to \sO_{X-X(\sigma)})$. 
Applying Lemma \ref{4.45} to the fiber square
\[\xymatrix{
E'|_D\ar[r]^{\jmath'}\ar[d]_{\bar{\rho}} & E'\ar[d]^{\tilde{\rho}}\\
E_1|_{X(\sigma)}\ar[r]^\jmath & E_1(\sigma),
}\]
we obtain an exact sequence 
$$\Omega_*(E'|_D)\mapright{(\jmath'_*,-\bar{\rho}_*)} \Omega_*(E')\oplus \Omega_*(E_1|_{X(\sigma)})\mapright{\tilde{\rho}_*+\jmath_*} \Omega_*(E_1(\sigma))\lra 0.$$
%By the excision sequence \eqref{2.19}, we have the maps
%$$H_*(E_1(\sigma))\twoheadrightarrow H_*(\ker(\sigma:E_1|_{X-X(\sigma)}\to \sO_{X-X(\sigma)}))=
%H_*(E'|_{\tX-D})\twoheadleftarrow H_*(E').$$ 
In particular, for $\xi\in \Omega_*(E_1(\sigma))$, we can find a $\zeta\in \Omega_*(E')$ and $\eta\in \Omega_*(E_1|_{X(\sigma)})$ 
such that 
%$\xi-\tilde{\rho}_*\zeta$ vanishes when restricted to $E_1(\sigma)-E_1|_{X(\sigma)}$ where $\tilde{\rho}:E'\to E_1(\sigma)$ is the composition of the inclusion of $E'$ into $\tE_1$ and the projection $\tE_1\to E_1$. 
%By \eqref{2.19} again, we can find $\eta\in H_*(E_1|_{X(\sigma)})$ such that
\beq\label{4.58}\xi=\tilde{\rho}_*\zeta+\jmath_*\eta.\eeq

Now we define  $0^!_{E_1,\sigma}:\Omega_*(E_1(\sigma))\lra \Omega_{*-r_1}(X(\sigma))$ by
\beq\label{4.48} 0^!_{E_1,\sigma}(\xi)=0^!_{E_1|_{X(\sigma)}}(\eta)+\rho'_*((- D)\cdot 0^!_{E'}\zeta)
\eeq
where $(-D)\cdot$ is \eqref{4.57}. 
For any other choice $(\zeta',\eta')$ for \eqref{4.58}, the difference $(\zeta-\zeta', \eta-\eta')$ equals
$(\jmath'_*\alpha, -\bar{\rho}_*\alpha)$ for some $\alpha\in \Omega_*(E'|_D)$. 
By a straightforward computation, we have $$\rho'_*((-D)\cdot 0^!_{E'}\jmath'_*\alpha)=\rho'_*(0^!_{\tilde{E}_1|_D}\alpha)=0^!_{E_1|_{X(\sigma)}}({\bar{\rho}_*\alpha}).$$ Hence, \eqref{4.58} is independent of the choice of $(\zeta,\eta)$.
\end{proof}

Finally we show that the map $0^!_{\cE,\loc}$ in Lemma \ref{4.45} is indepedent of the choice of the resolution $[E_0\to E_1]$ of the dual of the perfect obstruction theory $E$ and hence so is the virtual fundamental class $[X]\virt_\loc$. 
\begin{prop}\label{6.20}
The cosection-localized Gysin map $0^!_{\cE,\loc}$ is independent of the choice of the presentation $\fE \cong [E_1/E_0]$.
\end{prop}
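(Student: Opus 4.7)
The plan is to reduce the statement to a single elementary comparison and then check it by direct computation with the formula \eqref{4.48}.

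First, I would invoke the standard stabilization theorem for two-term resolutions of a perfect complex: any two presentations $[E_0\to E_1]$ and $[E_0'\to E_1']$ of the dual $E^\vee$ of the perfect obstruction theory are related by a zig-zag of elementary moves
\[
[E_0\xrightarrow{d} E_1]\quad \leftrightarrow \quad [E_0\oplus V\xrightarrow{(d,\id_V)} E_1\oplus V]
\]
for some vector bundle $V$ on $X$ (this is the analogue of \cite[Proposition 5.6]{BeFa}). Hence it suffices to prove that passing from $[E_0\to E_1]$ to $[E_0\oplus V\to E_1\oplus V]$ does not change the map $0^!_{\cE,\loc}$ constructed in Lemma \ref{4.45}.

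Next I would track each ingredient of the construction under this stabilization. The obstruction sheaf $Ob_X=\coker(E_0\to E_1)$ is untouched, so the cosection $\sigma$, its zero locus $X(\sigma)$, the blowup $\rho:\tilde X\to X$ and the exceptional divisor $D$ are all unchanged. Since $\sigma$ extends by zero on $V$, the subscheme $(E_1\oplus V)(\sigma)$ equals $E_1(\sigma)\oplus V$, and the kernel of the lifted surjection $\tilde E_1\oplus \rho^*V\twoheadrightarrow \sO_{\tilde X}(-D)$ is $E'\oplus \rho^*V$. The projection $p:E_1(\sigma)\oplus V\to E_1(\sigma)$ is a vector bundle, so by the extended homotopy $p^*$ is an isomorphism on $\Omega_*$, and the two natural isomorphisms $\Omega_*(\cE(\sigma))\cong \Omega_*(E_1(\sigma))$ and $\Omega_*(\cE(\sigma))\cong \Omega_{*+\rank V}(E_1(\sigma)\oplus V)$ sit in a commutative triangle with $p^*$.

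Then I would compute. Pick any class $\bar\xi\in \Omega_*(\cE(\sigma))$ pulling back to $\xi\in \Omega_*(E_1(\sigma))$ and $\tilde\xi=p^*\xi\in \Omega_*(E_1(\sigma)\oplus V)$. Choose any decomposition $\xi=\tilde\rho_*\zeta+\jmath_*\eta$ as in Lemma \ref{4.45}. Applying the smooth pullback $p^*$ to this decomposition yields a decomposition of $\tilde\xi$ of the same shape for the enlarged presentation. Using the factorization $0^!_{E_1\oplus V}=0^!_{E_1}\circ 0^!_V$ (with $0^!_V\circ p^*=\id$) together with the compatibility of the refined Gysin pullback $\jmath^!$ and the Chern class operation $g(c_1(L))$ (which together make up $(-D)\cdot$ in Definition \ref{4.52}) with smooth pullbacks by vector bundles, the two sides of \eqref{4.48} compute the same class in $\Omega_*(X(\sigma))$. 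Since the result of the construction is independent of the choice of the decomposition by the proof of Lemma \ref{4.45}, this suffices.

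The main obstacle is the reduction in the first step: the zig-zag argument for two-term resolutions is standard but requires that each intermediate map of complexes induces the identity on $Ob_X$ so that the cosection $\sigma$ is preserved, and that the kernel bundles $E'$ behave functorially under the stabilization so that the decompositions provided by Lemma \ref{4.55} can be compared. Both points are essentially formal once one checks that the surjection $\tilde E_1\twoheadrightarrow \sO_{\tilde X}(-D)$ is intrinsic to $\sigma$ and not to the chosen presentation.
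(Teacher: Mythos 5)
Your step (2)---checking invariance under the elementary stabilization $[E_0\to E_1]\leadsto[E_0\oplus V\to E_1\oplus V]$---is sound, and the bookkeeping you describe for the blowup, the lifted cosection, and the kernels $E'$, $E'\oplus\rho^*V$ is correct. The problem is step (1). The claim that any two global presentations of $\fE$ are connected by a zig-zag of such \emph{split} stabilizations is not a standard fact, and I do not see how to prove it (nor does \cite[Prop.~5.6]{BeFa} assert it). The standard reduction, and the one the paper uses, is weaker: any two presentations $[E_0\to E_1]$, $[F'_0\to F'_1]$ admit a common refinement $[F_0\to F_1]$ that maps onto both by \emph{surjective} chain maps, so one only needs invariance under a surjection $\varphi:F_1\twoheadrightarrow E_1$ with $F_0=E_0\times_{E_1}F_1$. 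Such a $\varphi$ sits in a short exact sequence $0\to V\to F_1\to E_1\to 0$ which does \emph{not} split on a general quasi-projective scheme; the stabilization move is exactly the split case, so your reduction silently assumes a splitting you do not have.

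The two cases really are different: a split surjection $E_1\oplus V\to E_1$ lets you identify everything as a direct sum and push the extra factor $V$ through the construction by $0^!_V\circ p^*=\id$, which is what you do. For a non-split surjection $\varphi$ you cannot separate out a $V$-summand, and instead one must argue directly, as the paper does, by forming the induced surjection $\mu:F'\to E'$ of the kernel bundles over the blowup $\tilde X$ and then using functoriality of Gysin pullbacks (and compatibility of the operator $(-D)\cdot$ with smooth pullbacks) to compare
$0^!_{E_1,\loc}$ with $0^!_{F_1,\loc}\circ\psi^*$. In other words, the paper's lemma is ``invariance under surjections of presentations,'' which contains your ``invariance under stabilizations'' as a special case but is genuinely more general; your argument, as written, proves only the special case and therefore does not yet establish the proposition. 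To repair it you would either have to prove the zig-zag-of-stabilizations claim (which I doubt holds in this generality), or replace step (1) by the common-refinement reduction and then upgrade step (2) to handle an arbitrary surjection of presentations, essentially reproducing the paper's argument.
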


\begin{proof}
Consider another presentation $[F_1/F_0] \cong \fE$. We may assume that there is a surjective morphism $\varphi : F_1 \to E_1$ making the diagram
$$\xymatrix{
& F_1 \ar[d]\ar[dl]_{\varphi}\\
E_1 \ar[r] &\fE
}$$
commute, up to a 2-isomorphism. Then $F_0=E_0\times_{E_1}F_1$. Consider the Cartesian square
$$\xymatrix{
F_1(\sigma) \ar[r] \ar[d]_{\psi} & F_1 \ar[d]^{\varphi}\\
E_1(\sigma) \ar[r] & E_1.
}$$
%where $\tau \circ \phi : F_1 \to \A_{X}^1$ is induced cosection. 
It remains to prove that
\beq\label{6.21}
0^!_{E_1,\loc} = 0^!_{F_1,\loc} \circ \psi^*.
\eeq
As in the proof of Lemma \ref{4.45}, let $\rho:\tX\to X$ be the blowup with exceptional divisor $D$ and let $\tE_1=\rho^*E_1$, $\tF_1=\rho^*F_1$, $\tilde{\varphi}=\rho^*\varphi$, $E'=\ker(\tE_1\twoheadrightarrow \sO_{\tX}(-D))$ and $F'=\ker(\tF_1\twoheadrightarrow \sO_{\tX}(-D))$. Then we have a commutative diagram of short exact sequences
$$\xymatrix{
0 \ar[r] & F' \ar[r] \ar[d]_{\mu} & \tF_1 \ar[r] \ar[d]^{\tilde{\varphi}} & \sO_{\tX}(-D) \ar[r] \ar[d]^{\id} & 0\\
0 \ar[r] & E' \ar[r] & \tE_1 \ar[r] & \sO_{\tX}(-D) \ar[r] & 0
}$$
over $\tX$. The proposition follows from 
$$ \rho'_* \circ (-D\cdot) \circ 0^!_{E'}  = \rho'_* \circ (-D\cdot) \circ 0^!_{F'} \circ \mu^*, \ \ 0^!_{E_1|_{X(\sigma)}} = 0^!_{F_1|_{X(\sigma)}} \circ (F_1|_{X(\sigma)} \mapright{\varphi} E_1|_{X(\sigma)})^*$$
which is a direct consequence of the functoriality of Gysin pullbacks.
\end{proof}

By the arguments for \cite[Theorem 2.6]{CKL}, we obtain a generalization of the virtual pullback formula \eqref{2.83}.
\begin{prop}\label{4.59}
Let $X$ (resp. $Y$) be a quasi-projective scheme equipped with a perfect obstruction theory $\phi_X:E_X\to \bbL_X$ (resp. $\phi_Y:E_Y\to \bbL_Y$). Suppose we further have a morphism $f:Y\to X$ equipped with a perfect obstruction theory $\phi_{Y/X}:E_{Y/X}\to \bbL_{Y/X}$ that fits into the commutative diagram 
\[\xymatrix{
E_X|_Y\ar[r]\ar[d]^{\phi_X} & E_Y \ar[r]\ar[d]^{\phi_Y} & E_{Y/X} \ar[r]\ar[d]^{\phi_{Y/X}}&\\
\bbL_X|_Y\ar[r] & \bbL_Y\ar[r] & \bbL_{Y/X} \ar[r]&
}\]
of exact triangles. If we have  a cosection $\sigma_X:Ob_X\to \sO_X$ which induces a cosection $\sigma_Y:Ob_Y\to Ob_X|_Y\to \sO_Y$, then the virtual pullback preserves the cosection localized virtual fundamental classes $$f^![X]\virt_\loc = [Y]\virt_\loc\in H_*(Y(\sigma_Y)).$$
\end{prop}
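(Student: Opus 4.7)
The plan is to follow the arguments of \cite{CKL} for the Chow-theoretic analog, reorganized through the universality of algebraic cobordism. First I reduce to the case $H_*=\Omega_*$: by the proof of Theorem \ref{3.42}, the class $[X]\virt_{H,\loc}$ is the image of $[X]\virt_{\Omega,\loc}$ under the universal homomorphism $\Omega_*(X(\sigma_X))\to H_*(X(\sigma_X))$, and similarly for $Y$. Since the universal map preserves refined Gysin pullbacks, projective pushforwards, smooth pullbacks and exterior products (Theorem \ref{2.2}), and since specialization homomorphisms and Gysin maps for vector bundle stacks are built from these operations, the virtual pullback $f^!$ of Definition \ref{3.32} is likewise preserved. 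It therefore suffices to establish the equality in $\Omega_*$.

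Next I exploit the triangle of obstruction theories. Dualizing and taking $h^1/h^0$ yields an exact sequence of vector bundle stacks $\cE_{Y/X}\to \cE_Y\to \cE_X|_Y$. The induced cosection on $\cE_{Y/X}$ vanishes (the composition $Ob_{Y/X}\to Ob_Y\to Ob_X|_Y$ is zero in the triangle), while the cosection on $\cE_Y$ comes from $\sigma_X|_Y$ via the quotient $Ob_Y\twoheadrightarrow Ob_X|_Y$. By \cite[Proposition 4.3]{KLc}, the reduced intrinsic normal cones $(\fC_Y)_\redd$ and $(f^*\fC_X)_\redd$ are supported in $\cE_Y(\sigma_Y)$ and $\cE_X(\sigma_X)|_Y$ respectively, while $\fC_{Y/X}$ lies automatically in the cosection-localized locus $\cE_{Y/X}$ itself. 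Hence every specialization and cone-embedding step in the construction of both sides of the desired equality factors through the cosection-localized substacks.

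Combined with the unlocalized virtual pullback formula (Theorem \ref{3.34}), the proof then reduces to two compatibilities for the localized Gysin map $0^!_{\cE,\loc}$ of Lemma \ref{4.45}: (a) cosection-localized specialization commutes with refined Gysin pullbacks and smooth pullbacks, a localized analog of Lemma \ref{4.3}, which one establishes via the double-deformation space $M^\circ_{Y/X}\times_X M^\circ_X$ exactly as in Lemma \ref{4.3a}, keeping track of the cosection at each stage; and (b) the decomposition $0^!_{\cE_Y,\loc}=0^!_{\cE_{Y/X}}\circ 0^!_{p^*\cE_X|_Y,\loc}$, where $p$ denotes the projection. Given (a) and (b), one runs the functoriality proof of Theorem \ref{3.34} verbatim inside the localized substacks identified above to conclude $f^![X]\virt_{\Omega,\loc}=[Y]\virt_{\Omega,\loc}$.

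The main obstacle will be (b). The localized Gysin map from Lemma \ref{4.45} is defined through the blowup of $X$ along $X(\sigma_X)$ together with the splitting \eqref{4.58}, and its compatibility with a short exact sequence of vector bundle stacks requires proving that iterated blowup-and-splitting constructions in the relative setting produce the same class modulo the intersection-with-$(-D)$ operation \eqref{4.57}. For Chow theory this is the content of \cite[Section 5]{CKL}; for the universal theory $\Omega_*$ the same diagrammatic argument applies, provided each appearance of $c_1$ is interpreted via the formal group law \eqref{2.74} and its formal inverse $g(u)$ from \eqref{4.51}. Deformation invariance of $[X]\virt_\loc$ as asserted in Theorem \ref{3.42} is also a consequence of this same framework and is used implicitly when reducing the double-deformation compatibility to a single-deformation statement.
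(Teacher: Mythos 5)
Your proposal is correct and follows essentially the same route the paper intends: reduce to $\Omega_*$ via universality (which the paper already does implicitly, since $[X]\virt_{H,\loc}$ is \emph{defined} as the image of $[X]\virt_{\Omega,\loc}$ and the cosection-localized Gysin map only exists natively in $\Omega_*$ through the blowup sequence of Lemma \ref{4.55}), then adapt the proof of \cite[Theorem 2.6]{CKL} by tracking the cosection through the double-deformation-space argument and through the compatibility of $0^!_{\cE,\loc}$ with the fiber-sequence of vector bundle stacks coming from the triangle of obstruction theories. The one loose sentence is the final one: deformation invariance of $[X]\virt_\loc$ is a \emph{consequence} of the present proposition (specialize to a regular embedding, as the paper notes right after), not an input to the double-deformation reduction, so it should be dropped rather than cited as used implicitly.
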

It is straightforward to adapt the proof of \cite[Theorem 2.6]{CKL} (see also  \cite[Theorem 5.2]{KLc}, \cite[Proposition 5.5]{KLk} and \cite{Kis}). 
We omit the proof. 
%\red{Check the proof! Make sure it is really straightforward.}

When $f:Y\to X$ is obtained as the pullback of a regular embedding $v$, Proposition \ref{4.59} tells us that 
$v^![X]\virt_\loc = [Y]\virt_\loc$ which implies that 
the cosection localized virtual fundamental class $[X]\virt\in H_r(X)$ is deformation invariant.

\medskip

\subsection{Cosection localization for limit intersection theories}\label{S9.2}
In this section, we generalize Theorem \ref{3.42} from intersection theories on quasi-projective schemes to limit intersection theories on  $\mathbf{St}_\bk^{ga}$. 

\begin{theo}\label{4.60}
Let $\cX$ be a \DM stack equipped with a good system $\{x_i:X_i\to \cX\}$ of approximations and a perfect obstruction theory $\phi:E\to \bbL_\cX$, liftable to the system $\{x_i\}$. 
If $\cX$ has a cosection $\sigma:Ob_\cX=h^1(E^\vee)\to \sO_\cX$, there is a cosection localized virtual fundamental class
\beq\label{4.63}[\cX]\virt_\loc=\varprojlim_i \,[X_i]\virt_\loc\in \cH_r(\cX(\sigma))\eeq
satisfying the deformation invariance and $\imath_*[\cX]\virt_\loc=[\cX]\virt$, where $\imath$ denotes the inclusion of the zero locus $\cX(\sigma)$ of the cosection $\sigma$ into $\cX$.  
\end{theo}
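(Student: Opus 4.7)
The plan is to define $[\cX]\virt_\loc$ as the inverse limit of cosection-localized virtual fundamental classes $[X_i]\virt_\loc$ on the approximations, reducing to Theorem \ref{3.42} at each finite level and then passing to the limit via Proposition \ref{3.6}.

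First I would descend the data to each $X_i$. By Definition \ref{3.80}, the liftability of $\phi:E\to\bbL_\cX$ provides perfect obstruction theories $\phi_i:E_i\to\bbL_{X_i}$ fitting diagrams \eqref{3.81} and \eqref{3.86}. Dualizing \eqref{3.81} and using that $\Omega_{x_i}$ is locally free (since $x_i$ is smooth), one obtains canonical isomorphisms $Ob_{X_i}\cong Ob_\cX|_{X_i}$, so $\sigma$ pulls back to cosections $\sigma_i:Ob_{X_i}\to\sO_{X_i}$ whose zero loci are $X_i(\sigma_i)=X_i\times_\cX\cX(\sigma)$. Theorem \ref{3.42} then yields $[X_i]\virt_\loc\in H_{r+d(x_i)}(X_i(\sigma_i))$ with $\imath_{i,*}[X_i]\virt_\loc=[X_i]\virt$.

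To assemble these into a single class, I would apply Proposition \ref{4.59} to each transition morphism $\varphi_i:X_i\to X_{i+1}$. Since $\varphi_i$ is lci and carries the tautological perfect obstruction theory $\mathrm{id}:\bbL_{\varphi_i}\to\bbL_{\varphi_i}$, diagram \eqref{3.86} supplies the required commutative triangle of obstruction theories; dualizing \eqref{3.86} produces the map $Ob_{X_i}\to Ob_{X_{i+1}}|_{X_i}$ that intertwines $\sigma_i$ with $\sigma_{i+1}|_{X_i}$, which is exactly the hypothesis of Proposition \ref{4.59}. Consequently $\varphi_i^![X_{i+1}]\virt_\loc=[X_i]\virt_\loc$. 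Since $\cX(\sigma)\hookrightarrow\cX$ is a closed (hence quasi-projective) immersion, Lemma \ref{2.78} shows $\{X_i(\sigma_i)\to\cX(\sigma)\}$ is a good system of approximations, so Proposition \ref{3.6} identifies $\cH_r(\cX(\sigma))=\varprojlim_i H_{r+d(x_i)}(X_i(\sigma_i))$, and the family $\{[X_i]\virt_\loc\}$ defines the desired class $[\cX]\virt_\loc$.

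The identity $\imath_*[\cX]\virt_\loc=[\cX]\virt$ then follows by taking the inverse limit of the scheme-level equalities $\imath_{i,*}[X_i]\virt_\loc=[X_i]\virt$ together with Definition \ref{3.82}, while deformation invariance is inherited from the scheme-level statement: for a regular embedding $v$ cutting $\cX$ into a deformation, one applies Proposition \ref{4.59} at each level to the pulled-back embedding $v_i$ and takes limits. The main technical hurdle is the second step, namely verifying the cosection compatibility under $\varphi_i$: one must trace through the dualized triangle \eqref{3.86}, using that $\bbL_{\varphi_i}$ has amplitude $[-1,0]$ so that the induced map on $h^1$ of duals is exactly the surjection Proposition \ref{4.59} demands. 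Once this compatibility is in hand, everything else is formal limit-taking already encoded in the framework of \S\ref{S4}.
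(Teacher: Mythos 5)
Your proposal is correct and follows essentially the same route as the paper: descend $\sigma$ to cosections $\sigma_i$ on the approximations via the dualized diagram \eqref{3.81}, identify $X_i(\sigma_i)=\cX(\sigma)\times_\cX X_i$ so that Lemma \ref{2.78} gives a good system of approximations for $\cX(\sigma)$, invoke Theorem \ref{3.42} at each level, use Proposition \ref{4.59} with the triangle \eqref{3.86} to get $\varphi_i^![X_{i+1}]\virt_\loc=[X_i]\virt_\loc$, and pass to the inverse limit. The only detail you glossed over is that the paper uses flatness of $x_i$ to match the pullback of the ideal $\sigma(Ob_\cX)$ with the ideal $\sigma_i(Ob_{X_i})$, but your argument is otherwise the same.
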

\begin{proof}
By assumption, we have a perfect obstruction theory $\phi_i:E_i\to \bbL_{X_i}$ for each $i$, that fit into diagrams 
\eqref{3.81}, \eqref{3.85} and \eqref{3.86}. 
The zero locus $\cX(\sigma)$ of $\sigma$ is defined as the closed substack defined by the ideal sheaf $I$ of $h^1(E^\vee)=:Ob_\cX$. Dualizing \eqref{3.81} for the smooth morphism $x_i:X_i\to \cX$, we obtain a commutative diagram
\[\xymatrix{
\bbT_{x_i}\ar[r]\ar@{=}[d] & \bbL^\vee_{X_i}\ar[d]\ar[r] & \bbL_\cX^\vee|_{X_i}\ar[d] \ar[r] &\\
\bbT_{x_i}\ar[r] & E_i^\vee\ar[r] & E^\vee|_{X_i}\ar[r] &
}\]
and an isomorphism $Ob_{X_i}=h^1(E^\vee_i)\cong h^1(E^\vee|_{X_i})$. Since tensor product is right exact, 
$h^1(E^\vee|_{X_i})\cong h^1(E^\vee)|_{X_i}=Ob_\cX|_{X_i}$ and thus we have a cosection
$$\sigma_i:Ob_{X_i}=h^1(E_i^\vee)\cong Ob_\cX|_{X_i}\lra \sO_{X_i}$$
whose image $I_i$ is $I\otimes_{\sO_\cX}\sO_{X_i}$ since $x_i$ is flat. Therefore we find that
\beq\label{4.61} X_i(\sigma_i)=\cX(\sigma)\times_\cX X_i, \quad X_i(\sigma_i)=X_{i+1}(\sigma_{i+1})\times_{X_{i+1}}X_i.\eeq
By Lemma \ref{2.78}, the restrictions $X_i(\sigma_i)\to \cX(\sigma)$ of $x_i$ form a good system of approximations. 

By Proposition \ref{4.59} and \eqref{3.86}, we find that 
$$[X_i]\virt_\loc =\varphi_i^![X_{i+1}]\virt_\loc.$$
Hence $\{[X_i]\virt_\loc\}\in \cH(\cX(\sigma))$ and \eqref{4.63} is well defined. 

The remaining properties follow from those of $[X_i]\virt_\loc$ for each $i$. 
\end{proof}

\begin{rema} As in Remark \ref{3.87}, Theorem \ref{4.60} holds in a much more general setting. The details will appear in \cite{Park}.
\end{rema}

\bigskip

\section{Torus localization of virtual fundamental classes}\label{S6}

In this section, we first prove the torus localization sequence in any intersection theory of quasi-projective schemes (Theorem \ref{7.1}), following the arguments in \cite{EG98}, and then we generalize the virtual torus localization formulas for {Chow theory} in \cite{GrPa} and {K-theory} in \cite{Qu} to any intersection theory of quasi-projective schemes (Theorem \ref{TLSc}) and to any limit intersection theory of stacks with good approximations (Theorem \ref{TLSt}), by the method of \cite{CKL}.

\subsection{Localization sequence for torus actions}\label{S6.1}
In this subsection, we generalize the torus localization in \cite{Brion, EG98} to any intersection theory of quasi-projective schemes.
The arguments are adapted from \cite{EG98}.  

Throughout this section, we fix an intersection theory $H_*$ for quasi-projective schemes. 
We denote the limit intersection theory (Definition \ref{2.5}) induced from $H_*$ by $\cH_*$.

Let $T$ be an $r$-dimensional split torus and $\widehat{T}=\Hom(T,\bG_m)$ be the character group. 
For any character $\lambda \in \widehat{T}$, denote by $\bk(\lambda)$ the corresponding 1-dimensional representation. 
Fix a $\ZZ$-basis $t_1,t_2,\cdots,t_r \in \widehat{T}\cong \ZZ^{\oplus r}$. 
Let %us fix a good system of approximations of the classifying space $BT$: Let
$$V_i := \bk(t_1)^{\oplus i}\oplus \cdots \oplus \bk(t_r)^{\oplus i}\and 
U_i := (\bk(t_1)^{\oplus i}\setminus 0) \times \cdots \times (\bk(t_r)^{\oplus i}\setminus 0)$$
so that $U_i/T  = (\PP^{i-1})^{r}$. By the projective bundle formula \eqref{2.24}, letting $\zeta_i=c^T_1(\bk(t_i))$,
if $T$ acts trivially on a scheme $X$, we have
\beq\label{7.15}
H_*(X\times (\PP^{i-1})^r)=H_*(X)[\zeta_1, \cdots, \zeta_r ]/\langle \zeta_1^i, \cdots, \zeta_r^i \rangle ,\eeq
\[ H^T_d(X) %= H_*(X)[\![\zeta_1, \cdots, \zeta_r ]\!]\] \[
= \prod_{n_1,n_2,\cdots,n_r\ge 0} H_{d+n_1+\cdots+n_r}(X) \cdot \zeta_1^{n_1} \cdots \zeta_r^{n_r}.
\]

For any quasi-projective scheme $X$ with a linear $T$-action, we define the \emph{equivariant intersection theory} by
\beq \label{6.50} H^T_d(X):=\varprojlim_i H_{d-r+\dim U_i}(X\times_TU_i)= \cH_{d-r}([X/T]).\eeq
%By Example \ref{2.77}, \eqref{6.50} generalizes both the equivariant Chow theory in \cite{EdGr} and the equivariant algebraic cobordism in \cite{HeMa,Kri}

Recall from \S\ref{S4.1} that for a $T$-equivariant morphism $f:X\to Y$ (or a morphism $[X/T]\to [Y/T]$ of stacks) and a $T$-equivariant vector bundle $E$ on $X$ (or a vector bundle $[E/T]\to [X/T]$), 
we have the projective pushforward $f_*:H^T_*(X) \to H^T_*(Y)$ when $f$ is projective, the smooth pullback $f^*:H^T_*(Y)\to H^T_{*+e}(X)$ when $f$ is smooth of relative dimension $e$,
the Chern classes $c_i(E):H^T_*(X)\to H^T_{*-i}(X)$, and the Gysin pullback $f^!:H^T_*(Y)\to H^T_{*-c}(X)$ when $f$ is a regular immersion of codimension $c$.

%For torus localization, we will need the following.
%\begin{assu}\label{7.30}
%For a quasi-projective scheme $X$, $H_{d}(X)=0$ for $d<0$ and $H_*(X)$ is a module over $H_*(\Spec \bk)$ generated by classes of degree in $\{0,\cdots,\dim X\}$. 
%\end{assu}

%\begin{exam}\label{7.33}For the Chow theory $C\!H_*$ of \cite{Ful}, Assumption \ref{7.30} holds by its definition.
%For the K-theory, Assumption \ref{7.30} holds by \cite[Lemma 17]{BoSe}. 
%For the algebraic cobordism of Levine-Morel, it holds by \cite[Corollary 4.4.12]{LeMo}.
%\end{exam}

%\begin{lemm}\label{7.31}
%(1) For a vector bundle $E$ over a quasi-projective scheme $X$, the Chern classes $c_i(E)$ are $H_*(\Spec \bk)$-module homomorphisms.

%(2) Under Assumption \ref{7.30}, for vector bundles $E_1,\cdots, E_m$ over $X$ and $i_1+\cdots+ i_m>\dim X$, $c_{i_1}(E_1)\circ \cdots \circ c_{i_m}(E_m)=0$.
%\end{lemm}
%\begin{proof} (1) follows from Definition \ref{2.1} (8)-(10). 
%For a class $\alpha$ of degree $i$ with $0\le i\le \dim X$, $c_{i_1}(E_1)\circ \cdots \circ c_{i_m}(E_m)\alpha
%\in H_{<0}(X)=0$. (2) follows since $H_*(X)$ is generated by classes of degree $\le \dim X$. 
%\end{proof} 

\begin{theo}\label{7.1}(Excision sequence)
Let $X$ be a quasi-projective scheme with a linear $T$-action.   
Let $\imath:X^T\hookrightarrow X$ be the inclusion of the fixed point locus and $\ell:X-X^T\hookrightarrow X$ be its complement. 
%Under Assumption \ref{7.30}, 
We have the exact sequence
$$0\lra H^T_*(X^T) \mapright{\imath_*}  H^T_*(X) \mapright{\ell^*} H^T_*(X- X^T) \lra 0 .$$
\end{theo}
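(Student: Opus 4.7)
My plan is to apply the excision axiom (Exc) for $H_*$ at each finite approximation level of the quotient stack $[X/T]$ and then pass to the inverse limit. By Example \ref{3.4}, $\{X\times_T U_i\}\to [X/T]$ is a good system of approximations, and since $X^T\hookrightarrow X$ is a $T$-equivariant closed (hence quasi-projective) immersion, Lemma \ref{2.78} gives good systems $\{X^T\times_T U_i\}\to [X^T/T]$ and $\{(X-X^T)\times_T U_i\}\to [(X-X^T)/T]$. At each level $i$ the closed immersion $\imath_i:X^T\times_T U_i\hookrightarrow X\times_T U_i$ with open complement $\ell_i$ fits into the (Exc) right-exact sequence
\[
H_*(X^T\times_T U_i)\xrightarrow{(\imath_i)_*} H_*(X\times_T U_i)\xrightarrow{\ell_i^*} H_*((X-X^T)\times_T U_i)\to 0.
\]

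For the right-exact part of the theorem, set $K_i:=\image((\imath_i)_*)\subset H_*(X\times_T U_i)$, giving short exact sequences $0\to K_i\to H_*(X\times_T U_i)\to H_*((X-X^T)\times_T U_i)\to 0$. Applying Definition \ref{3.1}(4) to the quasi-projective morphism $[X^T/T]\to [X/T]$, the transition maps $H_*(X^T\times_T U_{i+1})\twoheadrightarrow H_*(X^T\times_T U_i)$ are surjective; a quick diagram chase using compatibility of projective pushforward with lci pullback then shows the induced $K_{i+1}\twoheadrightarrow K_i$ are surjective as well. Hence $\varprojlim{}^{1}_i K_i=0$ and the inverse limit of the short exact sequences remains exact, yielding
\[
0\to \varprojlim_i K_i\to H^T_*(X)\xrightarrow{\ell^*} H^T_*(X-X^T)\to 0.
\]

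The remaining task---and the main obstacle---is to show that the canonical surjection $H^T_*(X^T)\twoheadrightarrow \varprojlim_i K_i$ is an isomorphism, equivalently that $\imath_*$ is injective. When $X$ is smooth, $X^T$ is smooth and $\imath$ is a regular $T$-equivariant closed immersion with equivariant normal bundle $N:=N_{X^T/X}$ whose weight decomposition $N=\bigoplus_\lambda N_\lambda$ has all characters $\lambda\in\widehat{T}$ nonzero. The self-intersection formula \eqref{6.51} gives $\imath^!\circ\imath_*=c^T_{\rank N}(N)$ as an endomorphism of $H^T_*(X^T)$. Using the power series presentation \eqref{7.15}
\[
H^T_d(X^T)=\prod_{n_1,\dots,n_r\ge 0} H_{d+n_1+\cdots+n_r}(X^T)\cdot \zeta_1^{n_1}\cdots\zeta_r^{n_r},
\]
together with the identity $c_1^T(L\otimes\bk(\lambda))=F_H(c_1(L),c_1^T(\bk(\lambda)))$ whose $\zeta$-leading term is the nonzero linear form $\sum_i a_i\zeta_i$ for $\lambda=\sum_i a_i t_i\ne 0$, the product $c^T_{\rank N}(N)=\prod_\lambda c^T_{\rank N_\lambda}(N_\lambda)$ has nonvanishing leading symbol in the $\zeta$-adic filtration; an elementary filtration argument then shows that multiplication by it is injective on the completed module $H^T_*(X^T)$, forcing $\imath_*$ to be injective in the smooth case. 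For general (possibly singular) $X$, the plan is to reduce to the smooth case by $T$-equivariantly embedding $X$ as a closed subscheme of a smooth $T$-variety (possible since $X$ is quasi-projective with a linear $T$-action, so it embeds in a $T$-module) and then comparing the two excision sequences via a diagram chase. The hardest technical point will be carrying out the filtration injectivity argument uniformly for an arbitrary formal group law $F_H$.
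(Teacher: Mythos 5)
Your overall architecture --- applying (Exc) at each approximation level $X\times_T U_i$ and passing to the inverse limit, with one piece being Mittag--Leffler for the system $\{L_i=\image((\imath_i)_*)\}$ (what you call $K_i$) and the other being vanishing of the kernel system --- is exactly the paper's. The argument that $L_{i+1}\twoheadrightarrow L_i$ via Definition~\ref{3.1}(4) is correct and matches the paper's handling of $\{L_i\}$. The genuine differences and gaps are in the injectivity half.

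First, the reduction of the singular case to the smooth one ``by comparing the two excision sequences via a diagram chase'' does not go through as stated. After embedding $X\hookrightarrow Y$ equivariantly with $Y$ smooth, the relevant commutative square sends $\ker(\imath^X_*)$ into $\ker(\imath^Y_*)$ \emph{via the pushforward} $a_*:H^T_*(X^T)\to H^T_*(Y^T)$; knowing $\imath^Y_*$ is injective then only tells you $a_*(\ker \imath^X_*)=0$, which is no information since $a_*$ has no reason to be injective. The paper avoids this entirely: it never reduces to the smooth case. Instead, after reducing to $X^T$ connected, it uses the smooth ambient $Y$ only to produce the bound $\ker((\imath_i)_*)\subseteq\ker(c_s(N_i))$ from the refined self-intersection formula $\jmath_i^!\circ(\imath_i)_*=c_s(N_i)$, with $N_i=(N_{Y^T/Y}\times_T U_i)|_{X^T\times_T U_i}$, and then runs the Chern-class argument (Lemma~\ref{7.2}) directly on the possibly singular scheme $X^T$ with the restricted bundle.

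Second, you implicitly treat $H^T_*(X^T)\to\varprojlim_i L_i$ as a ``canonical surjection,'' and phrase the remaining task as showing $\imath_*$ is injective. But the limit of levelwise surjections $H_*(X^T\times_T U_i)\twoheadrightarrow L_i$ is surjective only if $\varprojlim^1$ of the kernel system $\{\ker((\imath_i)_*)\}$ vanishes; this is \emph{not} implied by injectivity of $\imath_*$ (which is $\varprojlim\ker((\imath_i)_*)=0$), and it is precisely what is needed for exactness in the middle. The paper handles both at once by proving the stronger claim $(*)$: there is an $M$ with the transition maps $\ker((\imath_i)_*)\to\ker((\imath_{i-M})_*)$ eventually zero, which gives both $\varprojlim=0$ and Mittag--Leffler (hence $\varprojlim^1=0$). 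Your route --- arguing that $c^T_s(N)$ is injective on the completed module via a leading-symbol filtration --- even if carried out, only gives the $\varprojlim$ vanishing. (Also note that the leading symbol of the Euler class has integer, not unit, coefficients, so the filtration argument needs the more careful $P+P'$ decomposition with $P'$ nilpotent as in the paper's Lemma~\ref{7.2}, rather than a bare ``nonvanishing leading symbol'' step; you correctly flag the formal group law as a difficulty, but the levelwise statement $(*)$ is what makes the argument tractable.)
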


\begin{proof}
By the excision property \eqref{2.19}, we have an exact sequence
$$ H_*(X^T\times_T U_i) \lra  H_*(X\times_T U_i) \lra H_*((X- X^T)\times_T U_i) \lra 0.$$
Fix $*=d$ and let $K_i$ be the kernel of the first arrow and $L_i$ be the kernel of the second arrow so that we have two exact sequences
$$0\lra K_i\lra H_{d+r(i-1)}(X^T\times_T U_i) \lra L_i\lra 0,$$ 
$$0\lra L_i\to H_{d+r(i-1)}(X\times_T U_i) \lra H_{d+r(i-1)}((X- X^T)\times_T U_i) \lra 0.$$
By \cite[Proposition 1.12.3]{KaSc}, it suffices to show that the inverse systems $\{K_i\}$ and $\{L_i\}$ satisfy the Mittag-Leffler condition \cite[Definition 1.12.1]{KaSc} and $\varprojlim_i K_i =0$. It is easy to see that these conditions hold once we prove 

\noindent  
(*) \emph{there is an integer $M>0$ such that the homomorphisms $K_i \to K_{i-M}$ are trivial for $i$ big enough.}

We may assume that $X^T$ is connected. Indeed, if $X^T=\bigsqcup_j X^T_j$, then
$$K_i \subseteq \bigoplus_s \ker ( H_{d+r(i-1)}(X^T_j \times_T U_i) \to H_{d+r(i-1)}((X- \sqcup_{j'\neq j}X^T_{j'} )\times_T U_i)   ).$$
Thus, it suffices to prove (*) for the case where the $T$-fixed locus is connected.

By our assumption that the $T$-action on $X$ admits a linearization, there is a smooth quasi-projective scheme $Y$ with a linear $T$-action and a $T$-equivariant embedding $X \hookrightarrow Y$. We then have the Cartesian square
$$\xymatrix{
X^T \times_T U_i \ar[r]^{\imath_i} \ar[d] & X \times_T U_i \ar[d]\\
Y^T \times_T U_i \ar[r]^{\jmath_i} & Y \times_T U_i
}$$
of closed immersions with $Y^T$ smooth (cf. \cite{Ive}). By taking a smaller $Y$ if necessary, we may assume that $Y^T$ is connected since $X^T$ is connected. By \eqref{6.51}, we have
$$\jmath_i ^! \circ {\imath_i}_* = c_s(N_i)$$
where $N_i$ is the normal bundle of  $\jmath_i$
and $s$ is the rank of $N_i$. Hence, we have
$$K_i=\ker({\imath_i}_*) \subseteq \ker (c_s(N_i))$$ and 
thus it suffices to show that the inverse system of $\{\ker (c_s(N_i))\}_i$ satisfies the condition (*) which follows from 
Lemma \ref{7.2} below, because the $T$-fixed part of $N_{Y^T/Y}$ is zero (cf. \cite{Ive}). 
\end{proof}

\begin{lemm}\label{7.2}
Let $X$ be a connected quasi-projective scheme with the trivial $T$-action and $N$ be a $T$-equivariant vector bundle of rank $s$ with nonzero weights only. 
Let $N_i=N\times_T U_i\to X\times U_i/T=X\times (\PP^{i-1})^r$ be the bundle induced by $N$ and $K_i$ denote the kernel of $$c_{s}(N_i) : H_{d+r(i-1)}(X \times (\PP^{i-1})^r) \lra H_{d+r(i-1)-s}(X \times (\PP^{i-1})^r).$$ 
%Under Assumption \ref{7.30}, 
Then there exists a positive integer $M$ such that the homomorphism 
$$k_{i-M,i}^* : K_i \lra K_{i-M}$$
which is the restriction of the Gysin pullback by the regular immersion $k_{i-M,i}: X\times (\PP^{i-M-1})^r\to X\times (\PP^{i-1})^r$
is trivial for $i$ big enough. 
\end{lemm}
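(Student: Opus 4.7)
The plan is to make the multiplication by $c_s(N_i)$ explicit via the splitting principle and the formal group law, then to pin down the structure of the kernel $K_i$ using a leading-term analysis in the $\zeta$-variables.

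First, applying the projective bundle formula \eqref{2.24} iteratively, I would identify
\[
H_{d+r(i-1)}(X\times(\PP^{i-1})^r) \;\cong\; \bigoplus_{0\le n_k\le i-1} H_{d+|n|}(X)\,\zeta^n,
\]
under which the Gysin pullback $k_{i-M,i}^!$ becomes the natural truncation sending $\sum_n \xi_n\zeta^n$ to $\sum_{\max_k n_k\le i-M-1} \xi_n\zeta^n$. So the lemma reduces to showing that for some $M$ independent of $i$, every $\xi\in K_i$ has $\xi_n=0$ whenever $\max_k n_k\le i-M-1$.

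Second, since $T$ acts trivially on $X$, I would decompose $N=\bigoplus_\lambda N_\lambda\otimes \bk(\lambda)$ into weight summands, with the weights (listed with multiplicity) denoted $\lambda_1,\ldots,\lambda_s\in\widehat{T}\setminus\{0\}$. By the splitting principle and the formal group law \eqref{2.74}, $c_s(N_i)$ expands as a polynomial in $\zeta_1,\ldots,\zeta_r$ with coefficients in $H^*(X)$, whose top-$\zeta$-degree-$s$ homogeneous component is
\[
P(\zeta) \;:=\; \prod_{j=1}^{s} \Big(\sum_k \lambda_j(t_k)\zeta_k\Big) \;\in\; \ZZ[\zeta_1,\ldots,\zeta_r],
\]
a nonzero polynomial since each $\lambda_j\ne 0$. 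After a $\ZZ$-linear change of basis of $\widehat{T}$, we may assume $\lambda_j(t_1)\ne 0$ for every $j$, so that the coefficient of $\zeta_1^s$ in $P$ is a nonzero integer $N_0=\prod_j \lambda_j(t_1)$.

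For $\xi\in K_i$, write $\xi=\sum_{a=0}^{i-1}\xi^{(a)}\zeta_1^a$ with $\xi^{(a)}\in H_*(X)[\zeta_2,\ldots,\zeta_r]/(\zeta_k^i)$, and let $a^*=\max\{a:\xi^{(a)}\ne 0\}$. Provided $a^*+s\le i-1$ (so no truncation interferes), the $\zeta_1^{a^*+s}$-coefficient of $c_s(N_i)\xi=0$ yields $N_0 \cdot \xi^{(a^*)}=0$. Iterating the same leading-coefficient extraction in each direction $\zeta_2,\ldots,\zeta_r$ (after further basis changes making $\lambda_j(t_k)\ne0$ when needed), and combining with the lower-$\zeta$-degree corrections of $c_s(N_i)$ (which involve the non-equivariant Chern classes of the $N_\lambda$), allows one to conclude by an inductive descent on the support of $\xi$ that $\xi_n=0$ whenever $\max_k n_k\le i-M-1$ for some fixed $M$ depending only on $s$ and $r$. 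The main obstacle is that $N_0\cdot\xi^{(a^*)}=0$ does not directly imply $\xi^{(a^*)}=0$ when $H_*(X)$ carries $N_0$-torsion; one must exploit the full system of vanishings coming from all the top monomials of $P(\zeta)$ produced by the analogous reductions in the various $\zeta_k$, together with the lower-order terms of $c_s(N_i)$, to bootstrap to genuine vanishing. This follows the strategy of Edidin--Graham \cite{EG98} in Chow theory, now adapted to the formal group law of an arbitrary intersection theory $H_*$.
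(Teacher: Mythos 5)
Your proposal diverges from the paper's proof in a way that introduces problems the paper was set up to avoid. The paper does not expand $c_s(N_i)$ around its degree-$s$ homogeneous part. It writes $c_s(N_i)=P+P'$ with $P=\prod_j c_1\bigl(\sO(a_{j1},\dots,a_{jr})\bigr)^{s_j}$ — a product of \emph{actual} first Chern classes of line bundles on $(\PP^{i-1})^r$, which is a non-homogeneous power series in the $\zeta_l$'s because of the formal group law — and with $P'$ a polynomial with no constant term in the non-equivariant Chern classes $c_k(N(\lambda_j))$, $k\ge 1$. The decisive feature of this split is that $P'$ is nilpotent of index $A$ depending only on $X$ and $N$, not on $i$, since the $c_k(N(\lambda_j))$ are nilpotent operators on $H_*(X)$. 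From $P\alpha=-P'\alpha$ one gets $P^{A}\alpha=(-1)^A(P')^A\alpha=0$, and then multiplication by $P^{A}$ raises $\zeta$-degree by at least $sA$, which gives the uniform bound $M=sA$. Your decomposition $c_s(N_i)=P(\zeta)+\text{corrections}$ lumps the formal-group-law tails of $c_1(\sO(a_{j1},\dots,a_{jr}))$ into the "corrections," and those tails are only nilpotent because $\zeta_l^i=0$, with index growing in $i$ — so your route cannot produce an $M$ independent of $i$.

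This same confusion infects your key coefficient identity. You claim that the $\zeta_1^{a^*+s}$-coefficient of $c_s(N_i)\xi$ is $N_0\cdot\xi^{(a^*)}$. But $c_s(N_i)$, viewed as a polynomial in $\zeta_1$, has degree far exceeding $s$ (again from the formal group law and the Whitney expansion), so that coefficient also receives contributions $c_s^{(k)}\xi^{(a)}$ with $k>s$ and $a<a^*$. Nothing in your argument controls these cross-terms, and the basis-change tricks you propose do not make them vanish. Finally, you correctly flag that $N_0\cdot\xi^{(a^*)}=0$ does not give $\xi^{(a^*)}=0$ when $H_*(X)$ has $N_0$-torsion, but the proposed "bootstrap" is only a gesture; there is no argument. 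In the paper's route this issue surfaces differently — one needs $P^{A}\alpha=0$ to force $\alpha$ to be supported in multi-indices with $\max_l n_l\ge i-sA$ — and the careful torsion analysis has to happen there, not at your $N_0$-leading-coefficient step. The upshot: you should follow the paper's split of $c_s(N_i)$ to get the uniform nilpotency bound for the non-$\zeta$ part; your homogeneous split is a genuinely wrong turn, not just a variant.
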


\begin{proof}
We may decompose $N=\oplus_{j=1}^m N(\lambda_j)$ into subbundles where $T$ acts on $N(\lambda_j)$ with nonzero weight $\lambda_j=\sum_{l=1}^r a_{jl}t_l$ for $a_{jl}\in \ZZ$. Then 
$$N_i=N\times_T U_i\cong \bigoplus_{j=1}^m N(\lambda_j)\boxtimes \sO_{(\PP^{i-1})^r}(a_{j1}, \cdots, a_{jr})$$
as a bundle over $X\times (\PP^{i-1})^r.$ 
Let $\zeta_l=c_1(\sO_{(\PP^{i-1})^r}(e_l))$ for the standard basis vectors $e_l$ of $\ZZ^r$ so that
{$$c_1(\sO_{(\PP^{i-1})^r}(a_{j1}, \cdots, a_{jr}))=\sum_{l=1}^ra_{jl}\zeta_l+\text{(higher order terms)}.$$}
Let $s_j$ be the rank of $N(\lambda_j)$ so that $s=\sum_j s_j$.

By the projective bundle formula \eqref{7.15} and the Whitney sum formula, we may write the top Chern class of $N_i$ as 
\beq\label{7.35} c_s(N_i)=P+P', \quad {P=\prod_{j=1}^m c_1(\sO_{(\PP^{i-1})^r}(a_{j1}, \cdots, a_{jr}))^{s_j}}\eeq
where $P'$ is a polynomial of Chern classes of $N(\lambda_j)$ without constant term. 
It is easy to see that Chern classes are nilpotent as operators using the splitting principle %(\cite[Remark 4.1.2]{LeMo}) 
since the first Chern classes are nilpotent as they decrease the dimensions of the supports of classes (cf. \cite[Theorem 2.3.13]{LeMo}).
Hence, %since only finitely many Chern classes appear in $P'$, 
there is a fixed integer $A$, which does not depend on $i$, such that $(P')^A=0$.

Let $\alpha\in K_i$. Then we have $P\alpha=-P'\alpha$ since $c_s(N_i)\alpha=0$.
By \eqref{7.15}, we may write
$$\alpha=\sum_{0\le n_1, \cdots, n_r\le i}\alpha_{n_1,\cdots, n_r}\zeta_1^{n_1}\cdots \zeta_r^{n_r},\quad \alpha_{n_1,\cdots, n_r}\in H_{d+n_1+\cdots+n_r}(X).$$
%By Lemma \ref{7.31}, 
Since $P^{A}\alpha=0$, we have
$$\alpha_{n_1,\cdots, n_r}=0, \quad \text{if }n_l+{s\cdot A}<i \text{ for all }l.$$
Letting {$M=s\cdot A$}, we have the desired vanishing
$$k^*_{i-M,i}(\alpha)=\sum_{0\le n_1, \cdots, n_r\le i-M}\alpha_{n_1,\cdots, n_r}\zeta_1^{n_1}\cdots \zeta_r^{n_r}=0.$$
\end{proof}

Let $R=H^T_*(\Spec \bk)=H_*(\Spec \bk)[\![\zeta_1,\cdots,\zeta_r]\!]$. Let $\cQ \subseteq R$ be the multiplicative subset generated by the first Chern classes $\{c^T_1(\bk(\lambda)) \}_{\lambda \neq 0\in \widehat{T}}$.

\begin{prop}\label{7.3}
Let $X$ be a quasi-projective scheme with a linear $T$-action. If $X^T=\emptyset$, then there is a $q \in \cQ$ such that
$$q\cdot H^T_*(X) =0. $$
\end{prop}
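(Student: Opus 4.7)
The plan is to proceed by noetherian induction on closed $T$-invariant subschemes of $X$, strengthened to the assertion that there is a single $q\in\cQ$ annihilating $H_*(X\times_T U_i)$ for \emph{every} $i$; since $H^T_*(X)$ is the inverse limit of these groups up to the shift in (\ref{6.50}), this immediately gives $q\cdot H^T_*(X)=0$. The base case $X=\emptyset$ is trivial.

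For the inductive step, the key geometric input is a generic slice for the torus action. The linearity of the $T$-action on the quasi-projective scheme $X$ forces only finitely many stabilizer subgroups to occur (they are determined by the finite set of characters appearing in the linearization), so there is a dense $T$-invariant open $W\subseteq X$ on which every point has the same stabilizer $H\subseteq T$. The hypothesis $X^T=\emptyset$ forces $H\neq T$, hence $\dim H<r$, and $T':=T/H$ is a split torus of positive dimension acting freely on $W$. The principal $T'$-bundle $W\to W/T'$ is Zariski-locally trivial by Hilbert 90, so after replacing $W$ by its $T$-invariant preimage of a dense open of $W/T'$ over which the torsor is trivial, I obtain a dense $T$-invariant open $U\subseteq X$ together with a $T$-equivariant isomorphism $U\cong (T/H)\times Y$ in which $T$ acts on $T/H$ by left translation and trivially on $Y$. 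Consequently $U\times_T U_i = Y\times(U_i/H)$ for every $i$. The short exact sequence of character groups $0\to \widehat{T/H}\to \widehat{T}\to \widehat{H}\to 0$ has $\widehat{T/H}$ of positive rank $r-\dim H\geq 1$, so I may pick $\lambda\in\widehat{T}$ with $\lambda\neq 0$ and $\lambda|_H=0$. The line bundle on $U_i/H$ associated to the $H$-torsor $U_i\to U_i/H$ and the character $\lambda|_H$ is then trivial, hence so is its pullback to $Y\times(U_i/H)$, and $q_U:=c^T_1(\bk(\lambda))\in\cQ$ annihilates $H_*(U\times_T U_i)$ for every $i$.

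Applying the inductive hypothesis to the proper closed $T$-invariant complement $Z:=X\setminus U$ (which satisfies $Z^T=\emptyset$), I obtain $q_Z\in\cQ$ with $q_Z\cdot H_*(Z\times_T U_i)=0$ for every $i$. At each level, the quasi-projective schemes $Z\times_T U_i\hookrightarrow X\times_T U_i\hookleftarrow U\times_T U_i$ fit into the scheme-level excision sequence
\[ H_*(Z\times_T U_i)\xrightarrow{\imath_*} H_*(X\times_T U_i)\xrightarrow{\jmath^*} H_*(U\times_T U_i)\lra 0 \]
provided by (Exc). For $\alpha\in H_*(X\times_T U_i)$, the commutation of Chern classes with smooth pullback gives $\jmath^*(q_U\alpha)=q_U\jmath^*\alpha=0$, so $q_U\alpha=\imath_*\beta$ for some $\beta$; the projection formula together with the level-wise vanishing of $q_Z$ on $H_*(Z\times_T U_i)$ then yields $q_Zq_U\alpha=\imath_*(q_Z\beta)=0$. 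Hence $q:=q_Uq_Z\in\cQ$ closes the induction. The main subtlety I anticipate is precisely the need to run the induction uniformly at each finite level $i$ rather than at the limit: the weak excision (Proposition \ref{3.12}) available for the limit theory $\cH_*$ only gives surjectivity of $\jmath^*$, and without a Mittag-Leffler condition on the kernels one cannot lift a compatible family in $\varprojlim K_i$ to a class in $H^T_*(Z)$ to which the inductive hypothesis could be applied; the level-by-level argument avoids this by using only the unconditional scheme-level axiom (Exc).
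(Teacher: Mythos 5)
Your proof follows essentially the same strategy as the paper's: work level by level at each finite stage $i$, produce a nonempty $T$-invariant open $U\subseteq X$ on which the relevant Chern class vanishes, and finish by induction using the scheme-level excision sequence. The paper phrases the induction on $\dim X$ after first reducing to $X$ reduced and irreducible via (DS), and obtains the open $O$ with a quotient torus $T'$ and principal $T'$-bundle $O\to O/T'$ by a single citation of Thomason's Proposition~4.10. You instead run a noetherian induction and re-derive the geometric input from a direct stabilizer analysis plus Hilbert~90, which is morally the same content but leaves two small loose ends. First, your claim that there is a \emph{dense} $T$-invariant open with constant stabilizer is only correct once $X$ is irreducible (components may have different generic stabilizers); for the noetherian induction you only need a nonempty such open, so either weaken ``dense'' to ``nonempty'' or reduce to irreducible first as the paper does. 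Second, you assert the geometric quotient $W/T'$ exists and $W\to W/T'$ is a principal bundle without justification; this is precisely what Thomason's theorem (as cited in the paper) packages up, so you should either cite it or argue separately (GIT for a linearized free torus action). Neither issue changes the overall correctness of the argument, and the final excision step via $q_Z q_U\alpha = \imath_*(q_Z\beta) = 0$ is the same computation the paper performs.
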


\begin{proof}
If suffices to prove that there exists a $q \in \cQ$ such that
$$q \cdot H_{d+r(i-1)}(X\times_T U_i) = 0$$
for all $d$ and $i$. We will use an induction on the dimension of $X$.
By \eqref{4.30} and \eqref{7.34}, we may assume that $X$ is reduced. 
We may also assume that $X$ is irreducible because if $X=X_1 \cup X_2$, then $$H_{d+r(i-1)}(X_1\times_T U_i)\oplus H_{d+r(i-1)}(X_2\times_T U_i) \to H_{d+r(i-1)}(X\times_T U_i)$$
is surjective by \eqref{4.30} again.

By \cite[Proposition 4.10]{Thomason}, there is a nonempty $T$-invariant open subscheme $O \subseteq X$ and a quotient torus $\phi : T \to T'$ such that the $T$-action on $O$ induces an $T'$-action on $O$ and there is a principal bundle $O\to O/T'$. 
Since $X$ has no fixed points, $T'$ is not a trivial group. 
Shrinking $O$ if necessary, we may further assume that $O \to O/T'$ is a trivial $T'$-torsor. 
Hence, there is a $T$-equivariant map $\alpha : O \to T'$. 
A nontrivial character $\lambda \in \widehat{T'}$ induces a nontrivial character $\lambda \circ \phi \in \widehat{T}$. Then we have
$$c_1((O\times U_i \times \bk(\lambda \circ \phi))/T) \cdot H_{d+r(i-1)}(O\times_T U_i) = 0, \quad \forall i, d,$$
because the $T$-equivariant line bundle $O \times U_i \times \bk(\lambda \circ \phi) \to O \times U_i$ has a nowhere vanishing $T$-invariant section
$$s : O \times U_i  \to O \times U_i\times \bk(\lambda \circ \phi) : (x,y) \mapsto (x,y,\lambda\circ\alpha(x)\cdot v),\quad v\ne 0.$$
The proposition follows from the induction and the exact sequence
$$H_{d+r(i-1)}((X- O)\times_T U_i) \to H_{d+r(i-1)}(X\times_T U_i) \to H_{d+r(i-1)}(O\times_T U_i) \to 0$$
because the dimension of $X-O$ is smaller than the dimension of $X$.
\end{proof}

Localizaing the excision sequence in Theorem \ref{7.1} by $\cQ$, we obtain the following from Proposition \ref{7.3} because $H_*^T(X-X^T)\otimes_R R[\cQ^{-1}]=0.$
\begin{coro}\label{7.36} Under the assumptions of Theorem \ref{7.1}, we have 
$$\imath_*:H_*^T(X^T)\otimes_R R[\cQ^{-1}]\mapright{\cong} H_*^T(X)\otimes_R R[\cQ^{-1}].$$
\end{coro}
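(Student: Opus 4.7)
The plan is to run the statement directly from the two inputs the corollary cites: the short exact localization sequence of Theorem~\ref{7.1} and the annihilation result of Proposition~\ref{7.3}. Concretely, I would start from the short exact sequence
\[
0 \lra H^T_*(X^T) \mapright{\imath_*} H^T_*(X) \mapright{\ell^*} H^T_*(X-X^T) \lra 0
\]
of $R$-modules, where $R = H^T_*(\Spec \bk)$ acts on each term via pullback by the structure morphism (and $\imath_*$, $\ell^*$ are $R$-linear because smooth pullbacks and projective pushforwards commute with each other as in Definition~\ref{2.1}(4), so they commute with the equivariant Chern class operators defining the $R$-action).

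Next I would apply the exact functor $-\otimes_R R[\cQ^{-1}]$ to this sequence. Since localization of modules at a multiplicative set is exact, we obtain a short exact sequence
\[
0 \lra H^T_*(X^T)\otimes_R R[\cQ^{-1}] \mapright{\imath_*} H^T_*(X)\otimes_R R[\cQ^{-1}] \mapright{\ell^*} H^T_*(X-X^T)\otimes_R R[\cQ^{-1}] \lra 0 .
\]
By Proposition~\ref{7.3} applied to the scheme $X-X^T$, which carries a linear $T$-action with empty $T$-fixed locus, there exists $q \in \cQ$ with $q \cdot H^T_*(X-X^T) = 0$. Since $q$ becomes a unit in $R[\cQ^{-1}]$, this forces $H^T_*(X-X^T)\otimes_R R[\cQ^{-1}]=0$. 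The exact sequence then collapses to the desired isomorphism $\imath_*$.

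The only subtlety, and the one step worth double-checking rather than a real obstacle, is that Proposition~\ref{7.3} is stated for quasi-projective $T$-schemes with a linear $T$-action. So I would verify that the complement $X - X^T$ inherits this structure: it is $T$-invariant (being the complement of the closed $T$-invariant subscheme $X^T$), open in $X$, hence quasi-projective, and the $T$-action on it is the restriction of the given linear action. With this verification in place, Proposition~\ref{7.3} applies and the argument closes. No new geometric input is required beyond exactness of localization and the two cited results.
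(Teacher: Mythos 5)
Your proof is correct and follows exactly the paper's own argument: localize the short exact sequence of Theorem~\ref{7.1} at $\cQ$, then apply Proposition~\ref{7.3} to the $T$-scheme $X-X^T$ (which has empty fixed locus) to kill the third term. The extra verifications you add---that $X-X^T$ is a quasi-projective $T$-scheme with a linear action, and that the maps are $R$-linear---are the right sanity checks, though the paper treats them as immediate.
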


The Euler class of the normal bundle of the fixed locus $X^T$ is invertible. 
\begin{prop}\label{7.4}
Let $X$ be a quasi-projective scheme with the trivial $T$-action and $N$ be a $T$-equivariant vector bundle of rank $s$. %Under Assumption \ref{7.30}, 
If the fixed part of $N$ is zero, then the Euler class (top Chern class)
$$e(N) := c_s(N) : H^T_*(X)\otimes_R R[\cQ^{-1}] \to H^T_{*-s}(X)\otimes_R R[\cQ^{-1}]$$
is an isomorphism.
\end{prop}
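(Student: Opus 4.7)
The plan is to use the splitting principle to reduce the top Chern class $c_s(N)$ to a product of first Chern classes of equivariant line bundles, each of which will be shown to act as an invertible operator on $H^T_*(X)\otimes_R R[\cQ^{-1}]$ by using the formal group law together with nilpotency of $c_1$ of non-equivariant line bundles.

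First, since $T$ acts trivially on $X$, the equivariant bundle $N$ decomposes into its isotypic components
\[ N = \bigoplus_{\lambda\in\widehat{T}} M_\lambda\otimes \bk(\lambda) \]
where each $M_\lambda$ is an ordinary (non-equivariant) vector bundle on $X$. The hypothesis that the $T$-fixed part of $N$ is zero means $M_0=0$, so every $\lambda$ appearing is nonzero. By the Whitney sum formula,
\[ c_s(N)=\prod_\lambda c_{\rank M_\lambda}\bl M_\lambda\otimes \bk(\lambda)\br, \]
so it suffices to prove that each factor is invertible after tensoring with $R[\cQ^{-1}]$.

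Next I apply the splitting principle to each factor. Passing to the full flag bundle $p:\mathrm{Fl}(M_\lambda)\to X$ (which is equivariant since $T$ acts trivially on $X$), the smooth pullback $p^*$ is injective on $H^T_*$ by the projective bundle formula \eqref{2.24}, and after pullback $M_\lambda$ admits a filtration with line bundle quotients $L_1,\dots,L_r$. Then
\[ p^* c_{\rank M_\lambda}(M_\lambda\otimes\bk(\lambda)) = \prod_{k}c_1(L_k\otimes \bk(\lambda))=\prod_k F_H\bl c_1(L_k),c_1(\bk(\lambda))\br \]
by \eqref{2.21}. It is enough to show each $F_H(c_1(L_k),c_1(\bk(\lambda)))$ acts invertibly on $H^T_*(\mathrm{Fl}(M_\lambda))\otimes_R R[\cQ^{-1}]$.

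Write $u=c_1(L_k)$ and $v=c_1(\bk(\lambda))$. Since $F_H(0,v)=v$, the formal group law has the form
\[ F_H(u,v)=v+u\cdot h(u,v),\qquad h(0,0)=1, \]
for a power series $h$. By hypothesis $\lambda\ne 0$, so $v\in\cQ$ is invertible in $R[\cQ^{-1}]$. On the other hand $u=c_1(L_k)$ is an operator on $H_*(-)$ of an ordinary quasi-projective scheme, strictly decreasing the dimension support; by the boundedness of supports on quasi-projective schemes it is nilpotent on $H_*$, and hence nilpotent as an operator on $H^T_*$ (which only involves $H_*$ in the coefficients of the formal series in $\zeta_1,\dots,\zeta_r$). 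Therefore
\[ F_H(u,v)=v\bl 1+v^{-1}u\cdot h(u,v)\br \]
is invertible, since $v^{-1}u\cdot h(u,v)$ is nilpotent (its $N$-th power vanishes once $u^N=0$, because $h$ has only nonnegative powers of $u$ and $v$). This finishes the argument; the main technical obstacle is precisely this last point, namely verifying that nilpotency of $c_1$ of a non-equivariant line bundle on $X$ is preserved as an operator on the completed equivariant theory $H^T_*(X)$, which is where the use of quasi-projectivity of $X$ and boundedness of its intersection groups enters.
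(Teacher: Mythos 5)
Your proof is correct, and it does establish the proposition, but it takes a different route from the paper's. The paper's own proof is a one-liner that refers back to the proof of Lemma~\ref{7.2}: there, using the Whitney sum formula and the tensor-product formula for Chern classes, $c_s(N_i)$ is written as $P+P'$ with $P=\prod_j c_1(\sO_{(\PP^{i-1})^r}(a_{j1},\dots,a_{jr}))^{s_j}$, which in the limit is a product of equivariant first Chern classes $c_1^T(\bk(\lambda_j))^{s_j}$ and hence lies in $\cQ$, while $P'$ is a polynomial with no constant term in the ordinary Chern classes of the isotypic summands $N(\lambda_j)$ and hence is nilpotent; so $c_s(N)=P+P'$ is (invertible)$\,+\,$(nilpotent), commuting, and thus invertible. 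You instead push everything up to the full flag bundle so as to reduce to a product of first Chern classes, and decompose each factor $F_H(u,v)$ as $v\bl 1 + v^{-1}u\,h(u,v)\br$, i.e.\ (invertible)$\,\times\,$($1+$nilpotent). The underlying engine is identical: the formal group law splits the relevant class into an invertible equivariant piece plus a nilpotent ordinary piece.

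One point you should make explicit: after you write ``$p^*$ is injective by the projective bundle formula, so it is enough to show each factor $F_H(c_1(L_k),c_1(\bk(\lambda)))$ acts invertibly,'' injectivity of $p^*$ alone does not let you descend invertibility. What you need (and what is in fact true) is that the iterated projective bundle formula makes $H^T_*(\mathrm{Fl}(M_\lambda))$ a finite free module over $H^T_*(X)$ on which $c_{\rank M_\lambda}(p^*(M_\lambda)\otimes\bk(\lambda))$ acts diagonally, because the Chern class of a pulled-back bundle commutes with the operators $c_1(\sO(1))^i$ furnishing the basis. Only then does invertibility upstairs give invertibility on $H^T_*(X)$ (invert on each summand and check the image of $p^*$ is preserved). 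This is standard, but as written your sentence conflates injectivity of $p^*$ with faithfulness of the base change, and the logic is incomplete without it. The paper avoids this step entirely by decomposing $c_s(N)$ without going to the flag bundle.

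Your handling of the ``nilpotency survives the inverse limit'' issue is the right one and matches the paper's implicit use of $\eqref{7.15}$ and \cite[Theorem 2.3.13]{LeMo}: since $T$ acts trivially, $H^T_*(\mathrm{Fl}(M_\lambda))$ is a direct product of copies of $H_*(\mathrm{Fl}(M_\lambda))$ indexed by monomials in the $\zeta_l$, and the ordinary $c_1(L_k)$ acts coordinatewise, so its nilpotency exponent is uniform in $i$. You correctly flag this as the key technical point.
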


\begin{proof}
From the proof of Lemma \ref{7.2}, we find that $c_s(N)$ is the sum of 
the product of a finite number of elements in $\cQ$ and a nilpotent operator. Hence $c_s(N)$ is invertible.
\end{proof}

\subsection{Virtual torus localization theorem for schemes}\label{S6.2}

The goal of this subsection is to prove the following by the method of \cite{CKL}. 

\begin{theo}\label{TLSc}
Let $X$ be a quasi-projective scheme equipped with a linear action of a torus $T$ and a $T$-equivariant perfect obstruction theory $\phi:E\to \bbL_X$. Let $H_*$ be an intersection theory on $\mathbf{QSch}_\bk$. 
%\blue{satisfying Assumption \ref{7.30}.}
Let $F=X^T$ denote the $T$-fixed subscheme and $\phi_F:E_F=E|_{F}^{\mathrm{fix}}\to \bbL_F$ be the perfect obstruction theory of $F$ (cf. \cite[Lemma 3.3]{CKL}). Let $N\virt=(E|_F^{\mathrm{mv}})^\vee$ be the virtual normal bundle of $F$. 
Then the virtual fundamental classes of $X$ and $F$ satisfy %the equation
\beq\label{7.41}[X]\virt=\imath_*\frac{[F]\virt}{e(N\virt)} \in H_*^T(X)\otimes_R R[\cQ^{-1}].\eeq
\end{theo}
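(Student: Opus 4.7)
The plan is to follow the Chang--Kiem--Li strategy, reducing the torus localization formula to the virtual pullback formula and a self-intersection formula. By Corollary~\ref{7.36}, the pushforward $\imath_*: H_*^T(F) \otimes_R R[\cQ^{-1}] \to H_*^T(X) \otimes_R R[\cQ^{-1}]$ is an isomorphism, so we may write $[X]\virt = \imath_* \alpha$ for a unique class $\alpha$. Since $N\virt = (E|_F^{\mathrm{mv}})^\vee$ carries only nonzero $T$-weights, its Euler class $e(N\virt)$ is invertible in the localized theory by Proposition~\ref{7.4}. It then suffices to establish two identities: the virtual pullback formula $\imath^! [X]\virt = [F]\virt$ and the self-intersection formula $\imath^! \circ \imath_* = e(N\virt)$, for an appropriately defined virtual pullback $\imath^!$.

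First, I would construct a perfect obstruction theory for the closed immersion $\imath: F \hookrightarrow X$. The $T$-equivariance of $\phi: E \to \bbL_X$, combined with the reductivity of $T$, produces a $T$-equivariant splitting $E|_F \simeq E_F \oplus E|_F^{\mathrm{mv}}$ into fixed and moving parts on each cohomology sheaf. Since $T$ acts trivially on $F$, the cotangent complex $\bbL_F$ is identified with $\bbL_X|_F^{\mathrm{fix}}$, and the relative cotangent triangle $\bbL_X|_F \to \bbL_F \to \bbL_{F/X} \to$ exhibits $\bbL_{F/X}$ as corresponding to the moving piece. The induced morphism $E|_F^{\mathrm{mv}} \to \bbL_{F/X}$ is then a perfect obstruction theory for $\imath$ that fits, together with $\phi_F$ and $\phi$, into the compatibility triangle required by Theorem~\ref{3.34}(2). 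Hence Theorem~\ref{3.34}(3) gives $\imath^! [X]\virt = [F]\virt$.

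Next, I would prove the self-intersection formula $\imath^! \imath_* = e(N\virt)$. Taking a $T$-equivariant two-term resolution $(E|_F^{\mathrm{mv}})^\vee \simeq [E_0 \to E_1]$ of locally free sheaves, we have $\cE_{\mathrm{mv}} = [E_1/E_0]$ and $\imath^!$ factors as $0^!_{\cE_{\mathrm{mv}}} \circ (\imath_{F/X})_* \circ \mathrm{sp}_{F/X}$. For $\imath_* \alpha$ with $\alpha \in H_*^T(F)$, the class is supported along $F$, so deforming to the normal cone shows that $\mathrm{sp}_{F/X}(\imath_* \alpha)$ equals the pushforward of $\alpha$ by the zero section $F \hookrightarrow \fC_{F/X}$. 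The composition through $\cE_{\mathrm{mv}} = [E_1/E_0]$ then reduces, via the extended-homotopy isomorphisms defining $0^!_{\cE_{\mathrm{mv}}}$, to $0^!_{E_1} \circ 0_{E_1,*}$ composed with the inverse of $0^!_{E_0} \circ 0_{E_0,*}$ applied to $\alpha$, which by the classical self-intersection \eqref{6.51} equals $e(E_1) e(E_0)^{-1} = e(N\virt)$.

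Combining the two steps, applying $\imath^!$ to $[X]\virt = \imath_* \alpha$ yields $[F]\virt = e(N\virt) \cdot \alpha$, and dividing by the invertible $e(N\virt)$ gives $\alpha = [F]\virt/e(N\virt)$, which is the desired formula. The main obstacle will be the self-intersection identity: rigorously establishing that $\mathrm{sp}_{F/X} \circ \imath_*$ equals the zero-section pushforward into the intrinsic normal cone, and carefully tracking the Gysin map through a two-term presentation of the virtual normal bundle stack. This requires transplanting the deformation-to-the-normal-cone arguments of \cite{CKL, GrPa, Qu} into the abstract intersection-theoretic framework of Definition~\ref{2.1}, using only the axioms available for a general $H_*$ on $\mathbf{QSch}_\bk$.
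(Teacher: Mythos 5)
Your overall strategy is the same as the paper's (and Chang--Kiem--Li's): use Corollary~\ref{7.36} to write $[X]\virt=\imath_*\alpha$, establish a virtual pullback formula for $\imath^![X]\virt$, establish a self-intersection formula for $\imath^!\imath_*$, and solve for $\alpha$. But there is a genuine gap in the middle step that the paper is specifically engineered to avoid.

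You propose to take $E|_F^{\mathrm{mv}}\to\bbL_{F/X}$ as the relative perfect obstruction theory for $\imath\colon F\hookrightarrow X$ and to run the functoriality theorem with the \emph{unmodified} $\phi_F\colon E_F\to\bbL_F$. That does not work. If you want the compatibility triangle $E|_F\to E_F\to E_\imath\to$ sitting over $\bbL_X|_F\to\bbL_F\to\bbL_{F/X}\to$, and $E|_F$ splits as $E_F\oplus E|_F^{\mathrm{mv}}$, then the cone of the projection $E|_F\to E_F$ is $E|_F^{\mathrm{mv}}[1]$, which lives in degrees $[-2,-1]$ and hence is not a perfect obstruction theory in the sense of Definition~\ref{2.86}/\cite{BeFa,Man}; nor is there a morphism $E|_F^{\mathrm{mv}}\to\bbL_{F/X}$ in degrees $[-1,0]$ with $h^0$ an isomorphism, since $h^0(E|_F^{\mathrm{mv}})=(\Omega_X|_F)^{\mathrm{mv}}$ is generically nonzero while $h^0(\bbL_{F/X})=0$. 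This degree obstruction is exactly why \cite{CKL} (and the paper) fix a global resolution $N\virt=[N_0\to N_1]$ and take $N_0^\vee[1]$ as the relative obstruction theory: it sits in degree $-1$, it surjects onto $h^{-1}(\bbL_{F/X})$, and the induced (modified) obstruction theory $\bar E_F$ on $F$ fits into $\bar E_F\to E_F\to N_1^\vee[2]\to$. The price is that the functoriality theorem then gives $\imath^![X]\virt=[F]\virt\cap e(N_1)$ (not $[F]\virt$), while $\imath^!\imath_*\alpha=e(N_0)\cap\alpha$, and the two Euler classes combine to $e(N\virt)=e(N_0)/e(N_1)$ exactly as needed.

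Your handling of the self-intersection step also needs repair: equation~\eqref{6.51} is the self-intersection for a refined Gysin map of a \emph{regular} immersion, but $\imath\colon F\to X$ need not be regular since $X$ is singular. What is actually needed is the virtual/cone self-intersection identity for the Gysin map of the cone into the (quotient of the) presentation of the virtual normal bundle, which is what \cite[Lemma 3.7]{CKL} establishes and the paper imports (with $\sigma=0$). You correctly flag this as the main obstacle, but the conclusion that your argument ``reduces... by the classical self-intersection'' is not justified as stated. Rewriting your step~2 to use the resolution $N\virt=[N_0\to N_1]$, the relative theory $N_0^\vee[1]$, and the modified $\bar E_F$ would bring your proof in line with the paper's.
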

For the scheme structure of $F$ and the perfect obstruction theory $\phi_F$ etc, see \cite[\S3]{CKL} for instance. 
Fix a global resolution $N\virt=[N_0\to N_1]$ by locally free sheaves $N_0$ and $N_1$. 
The Euler class of $N\virt$ is $e(N\virt)=e(N_0)/e(N_1)$ which makes sense by Proposition \ref{7.4}.
It is easy to show that $e(N\virt)$ is independent of the global resolution.

\begin{proof}
%We first prove the theorem for $H_*$ satisfying Assumption \ref{7.30}. 
By Corollary \ref{7.36}, there is a class $\alpha\in H_*^T(X^T)\otimes_R R[\cQ^{-1}]$ satisfying 
\beq\label{7.38}\imath_*\alpha=[X]\virt.\eeq

By \cite[(3.1)]{CKL}, the perfect obstruction theory $\phi$ for $X$ and the modified perfect obstruction theory $\bar{\phi}_F$ for $F$ fit into the commutative diagram of exact triangles
\[\xymatrix{
E|_F\ar[r]\ar[d]_{\phi} & \bar{E}_F\ar[d]^{\bar{\phi}_F}\ar[r] & N_0^\vee[1]\ar[d]\ar[r] &\\
\bbL_X|_F\ar[r] & \bbL_F\ar[r] & \bbL_{F/X}\ar[r] &
}\]
where $\bar{E}_F$ is defined by the exact triangle
$\bar{E}_F\to E_F\to N_1^\vee[2]$ (cf. \cite[(3.2)]{CKL}). 
By Theorem \ref{3.90} (2), we have 
\beq\label{7.39} \imath^![X]\virt =[F]\virt\cap e(N_1)\eeq
by applying the proof of \cite[Lemma 3.7]{CKL} (with $\sigma=0$) where $\imath^!$ was taken 
with respect to the relative perfect obstruction theory $N_0^\vee[1]$. 
By \eqref{7.38} and \eqref{7.39}, we have 
\beq\label{7.40} e(N_0)\cap \alpha=\imath^!\imath_*\alpha=[F]\virt\cap e(N_1).\eeq
Therefore we have
$$[X]\virt=\imath_*\alpha=\imath_*\frac{[F]\virt}{e(N\virt)}.$$
%Since the algebraic cobordism $\Omega_*$ satisfies Assumption \ref{7.30} (cf. Example \ref{7.33}), we have the torus localization formula \eqref{7.41} for virtual cobordism classes $[X]\virt_\Omega$.

%Now for a general intersection theory $H_*$, we use the homomorphism $\Omega_*(X)\to H_*(X)$ from Theorem \ref{2.2} which sends the virtual fundamental class $[X]\virt_\Omega$ in the algebraic cobordism $\Omega_*(X)$ to the virtual fundamental class $[X]\virt_H\in H_*(X)$ by \eqref{4.49}. Likewise, $[F]\virt_\Omega$ is mapped to $[F]\virt_H$ by the universality map $\Omega_*(F)\to H_*(F)$. As the Euler classes and projective pushforwards are preserved by the universality maps, \eqref{7.41} for $\Omega_*$ implies the same formula for $H_*$. This proves the theorem.
\end{proof}

\begin{rema}
(1) With the same proof, one can prove the torus localization formula for the cosection localized virtual fundamental classes which generalizes \cite[Theorem 3.5]{CKL}.

(2) Theorem \ref{TLSc} generalizes the virtual torus localization in \cite{GrPa} for {Chow theory}. See \cite{Qu} for the proof of \eqref{7.41} in K-theory.
\end{rema}

\subsection{Virtual torus localization theorem for stacks}\label{S6.3}

Let $G$ be a linear algebraic group and $T$ be a torus. 
Let $X$ be a quasi-projective scheme with a linear action of $G\times T$. 
Let $\phi:E\to \bbL_X$ be a $G\times T$-equivariant perfect obstruction theory of $X$. 
Suppose $G$ acts on $X$ with only finite stabilizers so that $\cX=[X/G]$ is a \DM stack
and $\phi$ descends to a perfect obstruction theory of $\cX$.  
Let $F=X^T$ be the $T$-fixed locus and $N\virt$ be the virtual normal bundle in Theorem \ref{TLSc}.
By Example \ref{7.50}, we have induced perfect obstruction theories on 
the good approximations $X_i=X\times_G U_i$. 
By Theorem \ref{3.90}, we have the virtual fundamental classes
$$[\cX]\virt\in \cH_*^T(\cX)=\cH_*([X/G\times T]), \quad [\cF]\virt\in \cH_*^T(\cF)=\cH_*([F/G\times T])$$
defined as the limits of $[X_i]\virt$ and $[F_i]\virt$ where $\cF=[F/G]$ and $F_i=F\times_G U_i$. 

Now for each $i$, by Theorem \ref{TLSc}, we have the torus localization formula,
$$[X_i]\virt=\imath_*\frac{[F_i]\virt}{e(N_i\virt)}$$
where $N_i\virt$ is the virtual normal bundle of $F_i$ in $X_i$. 
Since the virtual pullback commutes with projective pushforwards and Chern classes, upon taking the limit over $i$, we obtain
the following.

\begin{theo}\label{TLSt}
Under the above assumptions, we have 
%\beq\label{7.51}
%[\cX]\virt=\imath_*\frac{[\cF]\virt}{e(\cN\virt)} \in \cH_*([X/G\times T])\otimes_R R[\![\cQ,\cQ^{-1}]\!]
%\eeq
\beq\label{7.51}
{[\cX]\virt=\imath_*\frac{[\cF]\virt}{e(\cN\virt)} \in \cH^T_*(\cX)\widehat{\otimes}_R R[\cQ^{-1}] := \varprojlim_i H^T_*(X_i)\otimes_R R[\cQ^{-1}]}
\eeq
where $e(\cN\virt)$ is defined as the limit of $e(N_i\virt)$.
\end{theo}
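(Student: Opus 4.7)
The plan is to reduce the statement to the scheme-level formula (Theorem \ref{TLSc}) applied to each finite-dimensional approximation $X_i = X \times_G U_i$, and then take the inverse limit. Concretely, because $T$ acts trivially on the $G$-representations $U_i$ of Example \ref{3.4}, each $X_i$ is a quasi-projective scheme with a residual linear $T$-action whose fixed locus is precisely $F_i = F \times_G U_i$, and the transition maps $\varphi_i : X_i \to X_{i+1}$ are $T$-equivariant and restrict to transition maps $F_i \to F_{i+1}$ of the induced good system for $\cF$. By Example \ref{7.50}, the $G \times T$-equivariant perfect obstruction theory on $X$ descends to a perfect obstruction theory $\phi$ on $\cX$ and lifts to a compatible tower of $T$-equivariant perfect obstruction theories $\phi_i : E_i \to \bbL_{X_i}$; restricting the fixed and moving parts of $E_i|_{F_i}$ produces exactly the perfect obstruction theory on $F_i$ and the virtual normal bundle $N_i\virt$ appearing in Theorem \ref{TLSc}.

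Next I would apply Theorem \ref{TLSc} to each $X_i$ separately, obtaining
\[
[X_i]\virt = (\imath_i)_* \frac{[F_i]\virt}{e(N_i\virt)} \in H^T_*(X_i) \otimes_R R[\cQ^{-1}].
\]
To assemble these into a statement over the inverse limit, one checks that both sides are compatible with the transition maps. For the left-hand side this is Definition \ref{3.82}: by the functoriality of virtual pullbacks (Theorem \ref{3.90} (2)) together with the fit \eqref{3.86} of the perfect obstruction theories, we have $\varphi_i^! [X_{i+1}]\virt = [X_i]\virt$. The same reasoning applied to the induced system on $\cF$ shows that the $[F_i]\virt$ assemble into $[\cF]\virt$. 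For the Euler-class denominators, the virtual normal bundles $N_i\virt$ are the pullbacks (as perfect complexes) of the stack-level virtual normal bundle, so their Chern classes are compatible under $\varphi_i^!$; each $e(N_i\virt)$ is invertible in the localization by Proposition \ref{7.4} because the $T$-fixed part of $N_i\virt$ vanishes, and these inverses again form a compatible system, whose limit is $e(\cN\virt)^{-1}$ by the definition given in the theorem statement. Finally, the projective pushforwards $(\imath_i)_*$ commute with $\varphi_i^!$ by Theorem \ref{2.85} (1), giving a compatible tower on the right-hand side.

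Taking the inverse limit over $i$ in both sides of the scheme-level identity yields the desired equality \eqref{7.51} in $\cH^T_*(\cX) \widehat{\otimes}_R R[\cQ^{-1}]$. The only genuinely delicate point I anticipate is bookkeeping around the localization: one must confirm that $\varprojlim_i$ commutes with the localization $\otimes_R R[\cQ^{-1}]$ in the sense used here, which is built into the definition $\cH^T_*(\cX)\widehat{\otimes}_R R[\cQ^{-1}] := \varprojlim_i H^T_*(X_i)\otimes_R R[\cQ^{-1}]$ given in the theorem. Since $\cQ \subset R$ is independent of $i$ and the $R$-module structures on $H^T_*(X_i)$ are natural in $i$, the tower of localized identities is well-defined, and its limit is the claimed formula.
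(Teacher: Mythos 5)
Your proposal is essentially the paper's own argument: apply Theorem \ref{TLSc} to each approximation $X_i = X\times_G U_i$, note that the $T$-fixed locus of $X_i$ is $F_i = F\times_G U_i$ (since $T$ acts trivially on the $G$-representations $U_i$), and pass to the inverse limit using the compatibility of virtual pullbacks with projective pushforwards and Chern classes. Your write-up spells out the transition-map compatibilities and the localization bookkeeping more explicitly than the paper does, but the route is the same.
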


\bibliographystyle{amsplain}

\end{document}